\title{
\centerline{
Interlacements and the Wired Uniform Spanning Forest
}
}
\author{Tom Hutchcroft}
\address{University of British Columbia}
\email{thutch@math.ubc.ca}
\date{\today}
\crefname{theorem}{Theorem}{Theorems}
\crefname{thm}{Theorem}{Theorems}
\crefname{lemma}{Lemma}{Lemmas}
\crefname{lem}{Lemma}{Lemmas}
\crefname{remark}{Remark}{Remarks}
\crefname{prop}{Proposition}{Propositions}
\crefname{defn}{Definition}{Definitions}
\crefname{corollary}{Corollary}{Corollaries}
\crefname{conjecture}{Conjecture}{Conjectures}
\crefname{question}{Question}{Questions}
\crefname{chapter}{Chapter}{Chapters}
\crefname{section}{Section}{Sections}
\crefname{figure}{Figure}{Figures}
\theoremstyle{plain}
\newtheorem{thm}{Theorem}[section]
\newtheorem{lem}[thm]{Lemma}
\newtheorem{corollary}[thm]{Corollary}
\newtheorem{prop}[thm]{Proposition}
\newtheorem{conjecture}[thm]{Conjecture}
\newtheorem{question}[thm]{Question}
\theoremstyle{definition}
\newtheorem{defn}[thm]{Definition}
\theoremstyle{remark}
\numberwithin{equation}{section}
\renewcommand{\P}{\mathbb P}
\newcommand{\E}{\mathbb E}
\newcommand{\R}{\mathbb R}
\newcommand{\Z}{\mathbb Z}
\newcommand{\N}{\mathbb N}
\newcommand{\F}{\mathfrak F}
\newcommand{\cA}{\mathcal A}
\newcommand{\cG}{\mathcal G}
\newcommand{\cI}{\mathcal I}
\newcommand{\cW}{\mathcal W}
\newcommand{\sA}{\mathscr A}
\newcommand{\sB}{\mathscr B}
\newcommand{\sE}{\mathscr E}
\newcommand{\sI}{\mathscr I}
\newcommand{\WUSF}{\mathsf{WUSF}}
\newcommand{\UST}{\mathsf{UST}}
\newcommand{\OWUSF}{\mathsf{OWUSF}}
\newcommand{\eps}{\varepsilon}
\newcommand{\AB}{\mathsf{AB}}
\renewcommand{\Cap}{\mathrm{Cap}}
\newcommand{\OUST}{\mathsf{OUST}}
\newcommand{\past}{\mathrm{past}}
\begin{document}

\begin{abstract}
We  extend the Aldous-Broder algorithm to generate the wired uniform spanning forests (WUSFs) of infinite, transient graphs. 
We do this by replacing the simple random walk in the classical algorithm with Sznitman's random interlacement process.
We then apply this algorithm to study the WUSF, showing that every component of the WUSF is one-ended almost surely in any graph satisfying a certain weak anchored isoperimetric condition, that the number of
  `excessive ends' in the WUSF is non-random in any graph, and also  
that every component of the WUSF is one-ended almost surely in any transient unimodular random rooted graph. The first two of these results answer positively two questions of Lyons, Morris and Schramm [\emph{Electron. J. Probab.} \textbf{13} (2008), no. 58, 1702\ndash 1725], while the third extends a recent result of the author. 

Finally, we construct a counterexample showing that almost sure one-endedness of WUSF components is not preserved by rough isometries of the underlying graph, answering negatively a further question of Lyons, Morris and Schramm.  
\end{abstract}






\maketitle

\section{Introduction}
The \textbf{uniform spanning forests} (USFs) of an infinite, locally finite, connected graph $G$ are defined as weak limits of uniform spanning trees (USTs) of large finite subgraphs of $G$. These weak limits can be taken with either free or wired boundary conditions (see \cref{Sec:bgUSF}), yielding the \textbf{free uniform spanning forest} (FUSF) and \textbf{wired uniform spanning forest} (WUSF) respectively. The USFs are closely related to several other topics in probability theory, including loop-erased random walks \cite{Lawler80,Wilson96}, potential theory \cite{BurPe93,BLPS}, conformally invariant scaling limits \cite{Schramm00,LaSchWe04},  domino tiling \cite{Ken00} and the Abelian sandpile model \cite{Dhar90,Jar14}.
In this paper, we develop a new connection between the wired uniform spanning forest and Sznitman's \textbf{interlacement process} \cite{Szni10,Teix09}.

A key theoretical tool in the study of the UST and USFs is \textbf{Wilson's algorithm}~\cite{Wilson96}, which allows us to sample the UST of a finite graph by joining together loop-erasures of random walk paths. In their seminal work \cite{BLPS}, 
Benjamini, Lyons, Peres, and Schramm  (henceforth referred to as BLPS) extended Wilson's algorithm to infinite transient graphs and used this extension to establish several fundamental properties of the WUSF. For example, they proved that the WUSF of an infinite, locally finite, connected graph is connected 
 almost surely (a.s.) if and only if the sets of vertices visited by two independent random walks on the graph have infinite intersection a.s. 
This recovered the earlier, pioneering work of Pemantle~\cite{Pem91}, who proved that the FUSF and WUSF of  $\Z^d$ coincide for all $d$ and are a.s.\ connected  if and only if $d\leq4$.
Wilson's algorithm has also been instrumental in the study of scaling limits of uniform spanning trees and forests \cite{LaSchWe04,Schramm00,peres2004scaling,schweinsberg2009loop,barlow2014subsequential}.

 Prior to the introduction of Wilson's algorithm, the best known algorithm for sampling the UST of a finite graph was the \textbf{Aldous-Broder algorithm} \cite{Aldous90,broder1989generating}, which generates a uniform spanning tree of a finite connected graph $G$ as the collection of first-entry edges of a random walk on $G$. We now describe this algorithm in detail.
Let $\rho$ be a fixed vertex of $G$, and let $\langle X_n \rangle_{n\geq0}$ be a simple random walk on $G$ started at $\rho$. For each vertex $v$ of $G$, let $e(v)$ be the edge of $G$ incident to $v$ that is traversed by the random walk $X_n$ as it enters $v$ for the first time, and let 
$T=\left\{e(v) : v \in V \setminus \{\rho\}\right\}$
be set of first-entry edges. 
Aldous \cite{Aldous90} and Broder~\cite{broder1989generating} proved independently that the resulting random spanning tree $T$ is distributed uniformly on the set of spanning trees of $G$ (see also \cite[\S 4.4]{LP:book}). 
If we orient the edge in the direction opposite to that in which it was traversed by the random walk, then the spanning tree is oriented towards $\rho$, meaning that every vertex of $G$ other than $\rho$ has exactly one oriented edge emanating from it in the tree.

While the algorithm 
 extends without modification to generate USTs of recurrent infinite graphs, 
the collection of first entry edges of a random walk on a  transient graph might not span the graph. Thus, naively running the Aldous-Broder on a transient graph will not necessarily produce a spanning forest of the graph. 
Moreover, unlike in Wilson's algorithm, we cannot simply continue the algorithm by starting another random walk from a new location. 
As such, it has hitherto been unclear how to extend the Aldous-Broder algorithm to infinite transient graphs and, as a result, the Aldous-Broder algorithm has been of limited theoretical use in the study of USFs of infinite graphs. 

In this paper, we extend the Aldous-Broder algorithm to infinite, transient graphs by replacing the random walk with the random interlacement process.
The interlacement process was originally introduced by Sznitman \cite{Szni10} to study the disconnection of cylinders and tori by a random walk trajectory, and was generalised to arbitrary transient graphs by Teixeira~\cite{Teix09}. 
The interlacement process $\sI$ on a transient graph $G$ is a point process on the space $\cW^\ast \times \R$, where $\cW^\ast$ is the space of doubly-infinite paths in $G$ modulo time-shift (see \cref{Sec:bginterlacement} for precise definitions), and should be thought of as a collection of random walk excursions from infinity. 
We refer the reader to the monographs \cite{DrRaBa14} and \cite{CerTei12} for an introduction to the extensive literature on the random interlacement process.

We state our results in the natural generality of networks. Recall that a \textbf{network} $(G,c)$ is a connected, locally finite graph $G=(V,E)$, possibly containing self-loops and multiple edges, together with a function $c:E\to(0,\infty)$ assigning a positive \textbf{conductance} $c(e)$ to each edge $e$ of $G$. The conductance $c(v)$ of a vertex $v$ is defined to be the sum of the conductances of the edges emanating from $v$. 
Graphs without specified conductances are considered as networks by setting $c(e)\equiv1$. We will usually suppress the notation of conductances, and write simply $G$ for a network. See \cref{Sec:bgUSF} for detailed definitions of the USFs on general networks. 

Oriented edges $e$ are oriented from their tail $e^-$ to their head $e^+$. The reversal of an oriented edge $e$ is denoted $-e$.

\begin{thm}[Interlacement Aldous-Broder]\label{thm:IAB}
Let $G$ be a transient, connected, locally finite network, let $\sI$ be the interlacement process on $G$, and let $t\in \R$. For each vertex $v$ of $G$, let $\tau_t(v)$ be the smallest time greater than $t$ such that there exists a trajectory $(W_{\tau_t(v)},\tau_t(v))\in \sI$ passing through $v$, and let $e_t(v)$ be the oriented edge of $G$ that is traversed by the trajectory $W_{\tau_t(v)}$ as it enters $v$ for the first time. Then 
\[\AB_t(\sI):=\left\{-e_t(v):v \in V\right\}\]
has the law of the oriented wired uniform spanning forest of $G$.
\end{thm}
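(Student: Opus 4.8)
The plan is to obtain $\AB_t(\sI)$ as a limit of Aldous-Broder outputs on large finite subnetworks, matching it against the known description of the $\WUSF$ as a limit of (oriented) USTs with wired boundary conditions. Fix an exhaustion $V_1 \subseteq V_2 \subseteq \cdots$ of $G$ by finite vertex sets, and let $G_n^*$ be the network obtained from the induced subnetwork on $V_n$ by wiring its complement to a single vertex $\partial_n$. The classical Aldous-Broder theorem gives that the first-entry-edge tree of a random walk on $G_n^*$, oriented towards $\partial_n$, is the oriented $\UST$ of $G_n^*$; and by definition the laws of these oriented USTs converge weakly to the oriented $\WUSF$ of $G$. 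So it suffices to show that, for each fixed finite set of vertices, the restriction of $\AB_t(\sI)$ to the first-entry edges of those vertices agrees in law, in the $n \to \infty$ limit, with the corresponding first-entry edges in $G_n^*$.

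The key bridge is the excursion decomposition of the interlacement process. Restricting the trajectories of $\sI$ that enter $V_n$ to their successive excursions from $\partial V_n$ (equivalently, from the wired vertex $\partial_n$), the theory of random interlacements (Sznitman--Teixeira) tells us that the point process of such excursions, indexed by their arrival times, is a Poisson process whose intensity is (up to the time normalization) the excursion measure of the random walk on $G_n^*$ started from $\partial_n$. Concretely, if we watch the excursions in order of increasing time starting from $t$, the sequence of excursion paths into $V_n$ has exactly the law of the successive excursions into $V_n$ of a single random walk on $G_n^*$ started at $\partial_n$ and run forever. This is the standard ``random walk on the wired graph = interlacement excursions'' correspondence, and I would cite or reprove it in a short lemma. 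Under this coupling, for every vertex $v \in V_n$ whose first interlacement-visit after time $t$ occurs within $V_n$ (i.e. before any excursion leaves and the bookkeeping stays inside), the edge $e_t(v)$ is precisely the first-entry edge of that random walk on $G_n^*$, so $-e_t(v)$ is the corresponding oriented $\UST$ edge.

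What remains is a localization/consistency argument to pass to the limit. The issue is that $\tau_t(v)$ and $e_t(v)$ are defined using all of $\sI$, not just its excursions into $V_n$; but for $v \in V_m$ with $m$ fixed and $n$ large, the first trajectory after time $t$ that hits $v$ is, with probability tending to one, one of the excursions into $V_n$ we have just described, and its first-entry edge into $v$ is determined by its behavior inside $V_n$. Hence the restriction of $\AB_t(\sI)$ to $\{-e_t(v) : v \in V_m\}$ agrees with high probability with the restriction of the oriented $\UST$ of $G_n^*$, and taking $n \to \infty$ then $m \to \infty$ identifies the law of $\AB_t(\sI)$ with the oriented $\WUSF$. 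One should also check the easy structural facts that $\AB_t(\sI)$ is a.s.\ a spanning forest oriented so that each vertex has out-degree one --- transience guarantees every vertex is hit by some trajectory after time $t$, and the no-cycles property follows from the corresponding property in each $G_n^*$ together with the coupling.

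The main obstacle I expect is making the excursion-decomposition correspondence fully rigorous at the level of the interlacement \emph{point process with its time-labels}: one must verify that reading off excursions into $V_n$ in time-order really does reproduce a single random walk's excursion sequence (including that excursions belonging to the same trajectory appear consecutively and in the right order, and that distinct trajectories interleave in a way consistent with a single walk), and that nothing pathological happens as trajectories enter and leave $V_n$ many times. This is essentially bookkeeping about Sznitman's construction of $\sI$ via the measure on trajectories modulo time-shift, but it is the step where care is needed; everything downstream is a soft weak-convergence argument.
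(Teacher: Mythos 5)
Your overall architecture --- exhaustion, classical Aldous--Broder on $G_n^*$, and a passage to the limit --- is the same as the paper's, but your key bridge is stated as an exact identity that is false for fixed $n$, and this is more than bookkeeping. You claim the excursions of $\sI$ into $V_n$, read in time order, have \emph{exactly} the law of the excursion sequence of a single random walk on $G_n^*$ started at $\partial_n$. The first excursion of an interlacement trajectory enters $V_n$ according to the normalized equilibrium measure $c(u)P_u(\tau_{V_n}^+=\infty)/\Cap(V_n)$, and each subsequent excursion of the same trajectory enters according to harmonic measure in $G$ from the previous exit point conditioned to return to $V_n$ (and the trajectory may never return at all); the walk on $G_n^*$, by contrast, re-enters $V_n$ from $\partial_n$ according to the wired one-step measure $c(\partial_n,u)/c(\partial_n)$ every time. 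These measures do not coincide, so there is no exact coupling at the level of excursions into $V_n$. Moreover the excursion point process of $\sI$ into $V_n$ is not a Poisson process (excursions belonging to a common trajectory are dependent and share a time label), so the ``Poisson process with the excursion intensity of $G_n^*$'' description is not literally correct either.

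The correct statement runs in the opposite direction and is only a distributional limit, which is how the paper proceeds: one \emph{defines} a Poisson process $\sI^n$ of time-labelled excursions of the walk on $G_n^*$ from $\partial_n$ (so that $\AB_t(\sI^n)$ is exactly the classical Aldous--Broder tree of $G_n^*$ oriented towards $\partial_n$, since $\sI^n$ is just a decomposition of one walk into excursions), and then shows that for each \emph{fixed} finite $K$ the restriction of $\sI^n$ to excursions hitting $K$ converges weakly to the restriction of $\sI$ to trajectories hitting $K$; this is done by computing the intensity $Q_K^n$ via reversal of the walk at its hitting time of $K$ and checking $Q_K^n\to Q_K$. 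One then still needs a genuine continuity step to transfer weak convergence of the point processes to convergence of the cylinder probabilities of $\AB_t$: the paper applies the Portmanteau theorem after verifying that the boundary of the event $\{S\subseteq \AB_t^T(\cdot)\}$ is null for the law of $\sI$, which your ``soft weak-convergence argument'' elides. Your localization paragraph has the right instinct (fix $V_m$, let $n\to\infty$), but as written it leans on the nonexistent exact coupling at scale $V_n$; replacing it with the weak convergence $\sI^n\Rightarrow\sI$ at fixed $K$ together with the Portmanteau argument is what closes the gap.
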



A useful feature of the interlacement Aldous-Broder algorithm is that it allows us to consider the wired uniform spanning forest of an infinite transient graph as the stationary  measure of the ergodic Markov process $\langle \AB_t(\sI) \rangle_{t\in \R}$. Indeed, it is with this stationarity in mind that we consider the interlacement process to be a point process on $\cW^\ast \times \R$ rather than the more usual $\cW^\ast \times \R_+$. For example, a key step in proving that the number of excessive ends of the WUSF is non-random is to show that the number of \emph{indestructible} excessive ends is a.s.\ monotone in the time evolution of the process $\langle \AB_t(\sI) \rangle_{t\in \R}$. 

\section{Applications}

\subsection{Ends}
Other than connectivity, the most basic topological property of a forest is the number of ends its components have. Here, an infinite, connected graph $G=(V,E)$ is said to be \textbf{$k$-ended} if, over all finite subsets $W$ of $V$, the subgraph of $G$ induced by $V\setminus W$ has a supremum of $k$ infinite connected components. In particular, an infinite tree is $k$-ended if and only if there exist exactly $k$ distinct infinite simple paths starting at each vertex of the tree. Components of the WUSF are known to be one-ended a.s.\ in several large classes of graphs. The first result of this kind is due to Pemantle \cite{Pem91}, who proved that the WUSF of $\Z^d$ is one-ended a.s.\ for $2\leq d \leq 4$, and that every component of the WUSF of $\Z^d$ has at most two ends a.s.\ for every $d\geq 5$ (the WUSF of $\Z$ is the whole of $\Z$ and is therefore two-ended). BLPS \cite{BLPS} later completed this work, showing in particular that every component of the WUSF is one-ended  a.s.\ in any transient Cayley graph. We note that one-endedness of WUSF components has important consequences for the Abelian sandpile model \cite{JarRed08,JarWer14,Jar14}.

Taking a different approach, Lyons, Morris and Schramm \cite{LMS08} gave an isoperimetric criterion for one-endedness of WUSF components, from which they deduced that the every component of the WUSF is one-ended in every transitive graph not rough isometric to $\Z$, and also every non-amenable graph. Unlike the earlier results of BLPS, the results of Lyons, Morris and Schramm are robust in the sense that their assumptions depend only upon the coarse geometry of the graph and do not require any kind of homogeneity. 
They  asked \cite[Question 7.9]{LMS08} whether the isoperimetric assumption in their theorem could be replaced by the \textbf{anchored} version of the same condition, and in particular whether every WUSF component is one-ended a.s.\ in any graph with anchored expansion (defined below).  
Unlike classical isoperimetric conditions, anchored isoperimetric conditions are often preserved under random perturbations such as supercritical Bernoulli percolation~\cite{chen2004anchored,Pete08}. 

Given a network $G$ and a set $K$ of vertices of $G$, we write $\partial_E K$ for the set of edges of $G$ with exactly one endpoint in $K$, and write $|K|$ for the sum of the conductances of the vertices in $K$. Similarly, if $W$ is a set of edges in $G$, we write $|W|$ for the sum of the conductances of the edges in $W$.  Given an increasing function $f:(0,\infty)\to(0,\infty)$, we say that $G$ satisfies an \textbf{anchored $f(t)$-isoperimetric inequality} if 
\[\inf \left\{\frac{|\partial_E K|}{f\left(|K|\right)}:\;  K\subset V \text{ connected, } v\in K,\,   |K| < \infty\right\}>0\]
for every vertex $v$ of $G$. (In contrast, the graph is said to satisfy a (non-anchored) $f(t)$-isoperimetric inequality if the infimum  $\inf|\partial_E K|/f(|K|)$ is positive when  taken over \emph{all}  sets of vertices $K$ with $|K|<\infty$.) In particular, $G$ is said to have \textbf{anchored expansion} if and only if it satisfies an anchored $t$-isoperimetric inequality, and is said to satisfy a \textbf{$d$-dimensional anchored isoperimetric inequality} if it satisfies an anchored $t^{(d-1)/d}$-isoperimetric inequality. Such anchored isoperimetric inequalities are known to hold on, for example, supercritical
percolation clusters on $\Z^d$
and related graphs, such as half-spaces and wedges \cite{Pete08}.



\begin{thm}\label{T:anchor}
Let $G$ be a network with $c_0:=\inf_e c(e)>0$, and suppose that $G$ satisfies an anchored $f(t)$-isoperimetric inequality for some increasing function $f:(0,\infty)\to(0,\infty)$ for which there exists a constant $\alpha$ such that  $f(t) \leq t$ and $f(2t)\leq \alpha f(t)$ for all $t \in (0,\infty)$. Suppose that $f$ also satisfies each of the following conditions:
\begin{enumerate}
\item  \[\int_{c_0}^\infty\! \frac{1}{f(t)^2}\mathrm{d}t<\infty\]
\end{enumerate}
\vspace{.1em}
 and 
 \begin{enumerate}
 \item[2.]
 \[\int_{c_0}^\infty\! \exp\left(-\eps\left(\int_s^\infty \frac{1}{f(t)^2}\mathrm{d}t\right)^{-1}\right)\mathrm{d}s<\infty\]
 for every $\eps>0$.
 \end{enumerate}
 Then every component of the wired uniform spanning forest of $G$ is one-ended almost surely. 
\end{thm}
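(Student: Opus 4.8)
\emph{Transience and reduction.} The plan is to combine the interlacement Aldous--Broder representation (\cref{thm:IAB}) with the anchored isoperimetric hypotheses. First note that the anchored $f(t)$-isoperimetric inequality and condition~(1) already force $G$ to be transient: building a unit flow from a vertex $v$ to infinity that spreads out through a nested family of finite connected sets containing $v$, and bounding the energy dissipated across each shell by the isoperimetric profile, yields $R_{\mathrm{eff}}(v\leftrightarrow\infty)\lesssim_v\int_{c_0}^\infty f(t)^{-2}\,\mathrm{d}t<\infty$, so the interlacement process is defined and \cref{thm:IAB} applies. Sample the oriented WUSF as $\mathfrak F=\AB_0(\sI)$; each vertex $u$ then has a unique outgoing edge and hence a unique forward ray $\Gamma_u$, and we put $\past(v)=\{u:v\in\Gamma_u\}$. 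Since the oriented WUSF cannot contain both orientations of an edge (that would be a $2$-cycle), the forward rays of any two vertices of a common component eventually merge, and a short argument then shows that every component of $\mathfrak F$ is one-ended if and only if $\past(v)$ is finite for every vertex $v$. By K\"onig's lemma it therefore suffices to fix $v$ and show that $\P\bigl(\past(v)\text{ contains a connected }S\ni v\text{ with }|S|\geq r\bigr)\to0$ as $r\to\infty$.

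\emph{The geometric input.} The set $\past(v)$ is connected, contains $v$, and --- the key point --- has \emph{exactly one} WUSF edge on its edge boundary, namely the first edge of $\Gamma_v$: no WUSF edge can point into $\past(v)$ from outside (its tail would then also lie in $\past(v)$), and among the vertices of $\past(v)$ only $v$ itself can have its forward edge leave $\past(v)$. Hence if $\past(v)$ contains a connected $S\ni v$ with $|S|\asymp r$, the anchored inequality gives $|\partial_E S|\gtrsim_v f(r)$, with all but one of these boundary edges being non-tree edges. In the Aldous--Broder picture this is a very restrictive demand on the interlacement trajectories near $S$: every vertex of $S$ must be routed through $v$, and across each of the $\gtrsim f(r)$ boundary edges no trajectory from infinity may ``capture'' the boundary vertex out of the past.

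\emph{The probabilistic estimate, and the main obstacle.} The heart of the argument is to quantify the previous paragraph: one bounds the probability that a fixed candidate $S$ with $|S|\asymp r$ is contained in $\past(v)$, using that the interlacement trajectories provide essentially $\asymp\Cap(S)$ independent opportunities to disrupt such a configuration, together with the fact that for a set of mass $\asymp r$ in a graph with $f(t)$-isoperimetry one has $\Cap(S)\asymp\bigl(\int_r^\infty f(t)^{-2}\,\mathrm{d}t\bigr)^{-1}$ (here condition~(1) and the resistance estimate of the first step re-enter). Summing this over the candidate sets $S$ at each scale and then over dyadic scales, the bound on $\P\bigl(\past(v)\text{ reaches scale }R\bigr)$ becomes the tail of a series of the form $\sum_k\exp\bigl(-\eps\,(\int_{2^kc_0}^\infty f(t)^{-2}\,\mathrm{d}t)^{-1}\bigr)$, which converges --- and hence tends to $0$ as $R\to\infty$ --- precisely because of condition~(2) (compare it to the integral there via $s=2^kc_0$ and the doubling $f(2t)\leq\alpha f(t)$). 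This completes the proof modulo that estimate, which is the main obstacle: converting ``a large non-tree boundary of the past'' into a genuinely small probability through the interlacement structure, \emph{and} keeping the bounds uniform enough to absorb the union over the many possible shapes of $S$ at each scale. In the uniform-isoperimetry setting of Lyons--Morris--Schramm the isoperimetric constant is global, which leaves more room; here it is $v$-dependent and effective only at large scales, which is why one must work exclusively with connected sets through $v$ (the natural shape of $\past(v)$), why condition~(2) is demanded for \emph{every} $\eps>0$ rather than a single one, and why controlling the number of candidate shapes --- with only $c_0>0$ and no degree bound available --- is itself delicate.
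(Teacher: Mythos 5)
Your reduction of one-endedness to ``$\past_{\F}(v)$ is finite a.s.\ for every $v$'' is sound, but the strategy you propose for that step --- a first-moment bound over connected candidate sets $S\ni v$ at each scale, summed over dyadic scales --- has a gap that you correctly flag as the main obstacle but do not close, and which in fact cannot be closed in the form stated. For a fixed connected $S$ with $|S|\asymp r$, the best bound the hypotheses can give is of the form $\exp\bigl(-\eps\, c_v\bigl(\int_r^\infty f(t)^{-2}\,\mathrm{d}t\bigr)^{-1}\bigr)$ for each fixed $\eps>0$; condition (2) makes exactly this quantity summable over scales, with no room to spare. Meanwhile the number of connected sets of size $r$ containing $v$ grows at least exponentially in $r$ even under a degree bound (and is uncontrolled here, since only $\inf_e c(e)>0$ is assumed), so the union over shapes swamps the per-shape estimate: already for $f(t)=t$ the per-shape bound is $e^{-\eps r}$ against $e^{Cr}$ shapes with $C$ arbitrary. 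Moreover, the per-shape estimate itself (``$\asymp\Cap(S)$ independent opportunities to disrupt'') is not something the static representation $\F=\AB_0(\sI)$ of \cref{thm:IAB} provides: the event $S\subseteq\past_\F(v)$ is determined by the whole of $\sI_{[0,\infty)}$ and does not factor into independent disruption attempts.

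The paper's proof avoids shape enumeration entirely through two devices absent from your outline. First, the stationarity of the process $t\mapsto\AB_t(\sI)$ (\cref{L:criterion}): on the event $v\notin\cI_{[-\eps,0]}$, the past of $v$ in $\F_{-\eps}$ is the cluster of $v$ in $\past_{\F_0}(v)\setminus\cI_{[-\eps,0]}$, so it suffices to show that this cluster is finite a.s., whence $\P(\past_{\F_0}(v)\text{ infinite})\leq\P(v\in\cI_{[-\eps,0]})\to0$ as $\eps\to0$ (this limit, not shape-counting, is why condition (2) is needed for every $\eps>0$). Second, conditionally on $\F_0$ the relevant percolation lives on a single tree: the probability that $u$ remains connected to $v$ after deleting $\cI_{[-\eps,0]}$ is exactly $\exp(-\eps\Cap(\gamma_{u,v}))$, where $\gamma_{u,v}$ is the tree path from $u$ to $v$, and the anchored capacity bound of \cref{Thm:LPbound} applies to this connected set; the two-point bound is then converted into finiteness of the cluster by the method of random paths on the tree (\cref{L:perc}) together with Morris's theorem that WUSF components are recurrent (\cref{T:recurrent}, \cref{T:recurrentedit}). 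That recurrence input is essential --- without it, the finite-energy random path produced by \cref{L:perc} yields no contradiction --- and it does not appear anywhere in your argument.
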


In particular,  \cref{T:anchor} applies both to every graph with anchored expansion and to every graph satisfying a $d$-dimensional anchored isoperimetric inequality with $d>2$. 
 The
graph formed by joining two copies of $\Z^2$
together with a single edge between their origins
satisfies a 
$2$-dimensional isoperimetric inequality but has a two-ended WUSF.
The theorem can fail if edge conductances are not bounded away from zero, as can be seen by attaching an infinite path with exponentially decaying edge conductances to the root of a $3$-regular tree.

\cref{T:anchor} comes very close to giving a complete answer to  \cite[Question 7.9]{LMS08}. The isoperimetric condition of \cite{LMS08} is essentially that $G$ satisfies an $f(t)$-isoperimetric inequality for some $f$ satisfying all  conditions of \cref{T:anchor} with the possible exception of $(2)$; the precise condition required is slightly weaker than this but also more technical. Our formulation of \cref{T:anchor} is adapted from the presentation of the results of \cite{LMS08} given in \cite[Theorem 10.43]{LP:book}. 
The difference in requirements on the function $f(t)$ between \cref{T:anchor} and \cite[Theorem 10.43]{LP:book} can be seen by considering $f(t)$ of the form $t^{1/2}\log^\alpha(1+t)$:
In particular, we observe that \cite[Theorem 10.43]{LP:book} applies to graphs satisfying a $t^{1/2}\log^\alpha(1+t)$-isoperimetric inequality for some $\alpha>1/2$, while our theorem applies to graphs satisfying an \emph{anchored} $t^{1/2}\log^\alpha(1+t)$-isoperimetric inequality only if $\alpha>1$.


In \cref{Sec:Rough}, we give an example of two bounded degree, rough-isometric graphs $G$ and $G'$ such that every component of the WUSF of $G$ is one-ended, while the WUSF of $G'$ a.s.\ contains a component with uncountably many ends. This answers negatively Question 7.6 of \cite{LMS08}, and shows that the behaviour of the WUSF of a graph cannot always be determined from the coarse geometric properties of the graph alone.

\subsection{Excessive ends}

One example of a transient graph in which the WUSF has multiply-ended components is the subgraph of $\Z^6$ spanned by the vertex set
\[\left(\Z^5\times\{0\}\right)\cup\left(\left\{(0,0,0,0,0),(2,0,0,0,0)\right\}\times \N \right),\]
which is obtained from $\Z^5$ by attaching an infinite path to each of the vertices $u=(0,0,0,0,0)$ and $v=(2,0,0,0,0)$. The WUSF of this graph, which we denote  $\F$, is equal in distribution to the union of the WUSF of $\Z^5$ with each of the two added paths. If $u$ and $v$ are in the same component of $\F$, then there is a single component of $\F$ with three ends and all other components are one-ended. Otherwise, $u$ and $v$ are in different components of $\F$, so that there are exactly two components of $\F$ that are two-ended and all other components are one-ended. Each of these events has positive probability, so that the event that there exists a two-ended component of the WUSF has probability strictly between $0$ and $1$. Nevertheless, the \textbf{number of excessive ends} of $\F$, that is, the sum over all components of $\F$ of the number of ends minus 1, is equal to two a.s.

In light of this example, Lyons, Morris and Schramm \cite[Question 7.8]{LMS08} asked whether the number of excessive ends of the WUSF is non-random (i.e., equal to some constant a.s.) for any graph. Our next application of the interlacement Aldous-Broder algorithm is to answer this question positively. 

\begin{thm}\label{Thm:excessnotrandom} Let $G$ be a network. Then the number of excessive ends of the wired uniform spanning forest of $G$ is non-random. \end{thm}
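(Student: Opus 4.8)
The plan is to use \cref{thm:IAB} to realise the oriented wired uniform spanning forest as the value at time $0$ of the stationary process $\langle\AB_t(\sI)\rangle_{t\in\R}$, and to exploit the ergodicity of this process under time translation. Let $\theta_s$ denote the shift by $s$ acting on configurations of the interlacement process, so that $\theta_s$ preserves the law of $\sI$ and intertwines the time evolution, with $\AB_t(\theta_s\sI)$ and $\AB_{t-s}(\sI)$ coinciding. Since the intensity measure of $\sI$ is a product of a measure on $\cW^\ast$ with Lebesgue measure on the $\R$ coordinate, the translation flow $(\theta_s)_{s\in\R}$ is mixing, and in particular ergodic, for the law of $\sI$; hence every $\theta$-invariant real-valued function of $\sI$ is almost surely equal to a deterministic constant. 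Writing $N_t$ for the number of excessive ends of $\AB_t(\sI)$, stationarity already gives that the \emph{law} of $N_t$ does not depend on $t$, but $N_t$ is not itself $\theta$-invariant, and bridging this gap is the role of the notion of an indestructible excessive end.

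The heart of the argument is to introduce, for each $t$, the set of \emph{indestructible} excessive ends of $\AB_t(\sI)$ --- informally, those excessive ends that remain excessive ends of $\AB_{t'}(\sI)$ for every $t'\ge t$ --- and to prove that their number $M_t$ is almost surely non-decreasing in $t$. This is the step I expect to be the main obstacle. One must analyse precisely how $\AB_t(\sI)$ changes as $t$ crosses the label of a single interlacement trajectory $W$: the vertices for which $W$ was the selected trajectory are rerouted to their next trajectory, their first-entry edges change, and this surgery can propagate through the forest (detaching a past-subtree and reattaching it elsewhere, possibly merging components). The key claim to establish is that this surgery can never destroy an already-indestructible excessive end: given an incoming ray representing such an end, the rerouting either leaves the ray intact or replaces it by another incoming ray converging to the same end of the (possibly enlarged) component. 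Making ``the same end'' well defined across the component mergers the dynamics can produce, and checking that no indestructible end is ever lost, is the central technical difficulty; monotonicity of $M_t$ then follows, since the surgery can only preserve indestructible ends or create new ones.

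Granting the monotonicity, the determinism of $M_0$ is essentially automatic: a non-decreasing process all of whose one-dimensional marginals agree is almost surely constant in $t$, since for each $k\in\N$ one has $\P(M_0\le k< M_t)=\P(M_0\le k)-\P(M_0\le k,\,M_t\le k)=\P(M_0\le k)-\P(M_t\le k)=0$ using $\{M_t\le k\}\subseteq\{M_0\le k\}$ and equality of the marginals, and summing over $k$ gives $M_0=M_t$ almost surely. The common value $M_0$ is then $\theta$-invariant, hence a deterministic constant.

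It remains to transfer this from $M_0$ to $N_0$. Since $M_0\le N_0$ always, if the deterministic value of $M_0$ is infinite then $N_0=\infty$ almost surely and we are done, so assume it is finite. The plan is then to show that almost surely every excessive end of the wired uniform spanning forest is indestructible, so that $N_0=M_0$; equivalently, to rule out excessive ends with a bounded or one-sidedly bounded lifetime in the process $\langle\AB_t(\sI)\rangle_{t\in\R}$. Running the monotonicity argument backwards in time shows the number $M'_t$ of \emph{backwards}-indestructible excessive ends is almost surely non-increasing, hence likewise a deterministic constant; letting $t\to\pm\infty$ in $M_t\equiv$ const and $M'_t\equiv$ const identifies both constants with the (shift-invariant, hence deterministic) number of \emph{eternal} excessive ends and rules out ends that are born at a finite time and persist forever, or that exist at all earlier times and are then destroyed. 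The delicate remaining point is to exclude genuinely transient excessive ends --- those with a bounded lifetime --- for which one needs a finer analysis of the one-trajectory surgery showing that the destruction of an excessive end must be accompanied by a compensating structural change; reconciling the combinatorics of ends with the stationarity of the interlacement process here is, after the monotonicity claim itself, the part I expect to require the most care.
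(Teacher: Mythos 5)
Your high-level skeleton --- realise the wired forest as the time-zero slice of the stationary, ergodic process $\langle \AB_t(\sI)\rangle_{t\in\R}$, exhibit a monotone count of ``indestructible'' excessive ends, and conclude that a monotone stationary quantity is a.s.\ constant in $t$ and hence, by ergodicity, deterministic --- is exactly the paper's strategy, and your ``monotone plus equal marginals implies constant'' step is fine. But the two steps you yourself flag as ``the main obstacle'' and ``the delicate remaining point'' are where all the mathematical content lives, and you supply an argument for neither. The difficulty is created by your choice to define indestructibility \emph{dynamically} (an end that survives for all later times): with that definition the monotonicity of $M_t$ is essentially tautological (and the ``same end across mergers'' worry disappears once ends of a forest are defined, as in the paper, as equivalence classes of rays in the ambient graph $G$ that eventually lie in the forest), while the genuinely hard identity $N_0=M_0$ --- that \emph{every} excessive end survives --- becomes inaccessible; your closing discussion of ``backwards-indestructible'' and ``eternal'' ends does not close this gap.

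The paper's key idea, absent from your proposal, is a \emph{static, capacity-based} criterion: an excessive end is declared indestructible iff the trace $\{\gamma_i\}$ of a representing ray has finite capacity. Two hitting estimates then do the work. First, a finite-capacity set is hit by only finitely many trajectories of $\sI_{[t,0]}$ (\cref{L:zeroonehit2}), so every finite-capacity excessive end of $\F_0$ is still represented by the same ray in $\F_t$ for all $t\le 0$; this is the monotonicity statement with actual content. Second, an infinite-capacity set is hit infinitely often by $\sI_{[-\eps,0]}$ for every $\eps>0$ (\cref{L:recurrentcap,L:zeroonehit}); combined with the observation that on $\{v\notin\cI_{[-\eps,0]}\}$ the past of $v$ in $\F_{-\eps}$ is contained in $\past_{\F_0}(v)\setminus\cI_{[-\eps,0]}$, stationarity gives $\P\bigl(v \text{ has a destructible end in its past in } \F_0\bigr)\le \P\bigl(v\in\cI_{[-\eps,0]}\bigr)\to 0$, so destructible ends a.s.\ do not exist and $N_0=M_0$. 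You would also need the preliminary reductions the paper makes: the recurrent case and the case of uncountably many ends are handled by tail-triviality, and the reduction to countably many ends is what allows the hitting lemmas to be applied end-by-end to the $\sI_{[0,\infty)}$-measurable forest $\F_0$ using independence of $\sI_{[-\eps,0)}$. Without the capacity dichotomy and these lemmas, the proof is incomplete.
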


When combined with the spatial Markov property of the wired uniform spanning forest, \cref{Thm:excessnotrandom} has the following immediate corollary, which states that a natural weakening of \cite[Question 7.6]{LMS08} has a positive answer. (As mentioned above, we show the original question to have a negative answer in \cref{Sec:Rough}).

\begin{corollary}\label{cor:finmod}
Let $G$ be a network, and suppose that $G'$ is a network obtained from $G$ by adding and deleting finitely many edges. Then the wired uniform spanning forests of $G$ and $G'$ have the same number of excessive ends almost surely. In particular, if every tree of the wired uniform spanning forest of $G$ is one-ended a.s., then the same is true of $G'$.
\end{corollary}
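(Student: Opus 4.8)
The plan is to combine \cref{Thm:excessnotrandom} with the spatial Markov property of the WUSF. Given a network $H$, write $\xi(H)\in\{0,1,\dots,\infty\}$ for the almost sure value of the number of excessive ends of $\WUSF(H)$, whose existence is guaranteed by \cref{Thm:excessnotrandom}; the first assertion of the corollary is precisely that $\xi(G)=\xi(G')$. To prove this I would first pass to the overlay network $\hat G$, with vertex set $V$ and edge set $E(G)\cup E(G')$, carrying on each edge the conductance it has in $G$ or in $G'$ (these agree on the common edges). Writing $A=E(G')\setminus E(G)$ and $D=E(G)\setminus E(G')$, both of which are finite, we have $\hat G\setminus A=G$ and $\hat G\setminus D=G'$, and $\hat G$ is itself a connected, locally finite network. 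Because $G$ and $G'$ are connected, every network obtained from $\hat G$ by deleting some subset of $A$ or some subset of $D$ is again connected; hence we may delete the edges of $A$ (respectively of $D$) from $\hat G$ one at a time in such a way that at each step the edge removed is a \emph{non-bridge} of the network it is removed from. It therefore suffices to show that $\xi(H)=\xi(H\setminus e)$ whenever $e$ is a non-bridge edge of a network $H$, since applying this repeatedly along the two chains of deletions gives $\xi(G)=\xi(\hat G)=\xi(G')$.

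To prove that $\xi(H)=\xi(H\setminus e)$ when $e$ is a non-bridge of $H$, note that since $H\setminus e$ is connected there is a finite path joining the endpoints of $e$ inside $H\setminus e$, so the wired effective resistance between these endpoints in $H\setminus e$ is finite. By the standard identity $\P_{\WUSF(H)}(e\notin\mathfrak F)=(1+c(e)\mathcal R^W_{H\setminus e}(e^-\leftrightarrow e^+))^{-1}$ (which may also be obtained by taking a limit of the corresponding formula for the UST along a wired exhaustion), it follows that $\P_{\WUSF(H)}(e\notin\mathfrak F)>0$. By the spatial Markov property of the WUSF, the conditional distribution of $\mathfrak F\sim\WUSF(H)$ given the positive-probability event $\{e\notin\mathfrak F\}$ is exactly $\WUSF(H\setminus e)$. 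Since the number of excessive ends of $\mathfrak F$ equals $\xi(H)$ almost surely, this remains true after conditioning on $\{e\notin\mathfrak F\}$, and therefore $\WUSF(H\setminus e)$ has $\xi(H)$ excessive ends almost surely; that is, $\xi(H\setminus e)=\xi(H)$.

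It remains to deduce the second (``in particular'') assertion, which follows because every component of the WUSF is infinite almost surely, so that $\xi(H)=0$ if and only if every component of $\WUSF(H)$ is one-ended almost surely; combined with $\xi(G)=\xi(G')$ this gives the claim. The step I expect to require the most care is the reduction in the first paragraph: the modification must be routed through $\hat G=G\cup G'$ by adding the new edges before deleting the old ones, so that every intermediate network stays connected and every deleted edge is a non-bridge. This connectivity bookkeeping is essential rather than cosmetic --- deleting a genuine bridge disconnects the network (and can change the number of excessive ends), and it is precisely the non-bridge property that ensures $\P_{\WUSF(H)}(e\notin\mathfrak F)>0$, which is what lets us apply the spatial Markov property.
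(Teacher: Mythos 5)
Your proof is correct and follows essentially the same route as the paper: pass to the overlay network $\hat G = G''$, use the spatial Markov property together with the fact that the extra edges are simultaneously absent from the WUSF with positive probability, and then invoke \cref{Thm:excessnotrandom}. The only difference is that the paper conditions on all the extra edges being absent in one step (the event has positive probability since the edge set is finite and the remaining network is connected), whereas you delete them one non-bridge at a time --- a purely cosmetic variation.
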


\subsection{Ends in unimodular random rooted graphs}

Another generalisation of the one-endedness theorem of BLPS \cite{BLPS} concerns transient \textbf{unimodular random rooted graphs}. 
A \textbf{rooted graph} is a connected, locally finite graph $G$ together with a distinguished vertex $\rho$, the \textbf{root}. An isomorphism of graphs is an isomorphism of rooted graphs if it preserves the root.  The \textbf{local topology} on the space $\cG_\bullet$ of isomorphism classes of rooted graphs is defined so that two rooted graphs are close if they have large graph distance balls around their respective roots that are isomorphic as rooted graphs. Similarly, a doubly rooted graph is a graph together with an ordered pair of distinguished vertices, and the local topology on the space $\cG_{\bullet\bullet}$ of isomorphism classes of doubly rooted graphs is defined similarly to the local topology on $\cG_\bullet$. A random rooted graph $(G,\rho)$ is said to be \textbf{unimodular} if it satisfies the \textbf{Mass-Transport Principle}, which states that for every Borel function $f:\cG_{\bullet\bullet}\to[0,\infty]$ (which we call a \textbf{mass transport}),
\[\E\bigg[\sum_{v\in V}f(G,\rho,v)\bigg]=\E\bigg[\sum_{v\in V}f(G,v,\rho)\bigg].\]
In other words, the expected mass sent by the root is equal to the expected mass received by the root. Unimodular random rooted networks are defined similarly by allowing the mass-transport to depend on the edge conductances. 
We refer the reader to  Aldous and Lyons~\cite{AL07} for a systematic development and overview of the theory of unimodular random rooted graphs and networks, as well as several examples.

Aldous and Lyons \cite{AL07} proved that every component of the WUSF is one-ended a.s.\ in any bounded degree unimodular random rooted graph, and the author of this article  \cite{H15} later extended this to all transient unimodular random rooted graphs with finite expected degree at the root, deducing that every component of the WUSF is one-ended a.s.\ in any supercritical Galton Watson tree. (The assumption of finite expected degree was implicit in \cite{H15} since there we considered \emph{reversible} random rooted graphs, which correspond to unimodular random rooted graphs with finite expected degree.)
Our final application of the interlacement Aldous-Broder algorithm is to extend the main result of \cite{H15} by removing the condition that the expected degree of the root is finite.

\begin{thm}\label{thm:unimod} Let $(G,\rho)$ be a transient unimodular random rooted network. Then every component of the wired uniform spanning forest of $G$ is one-ended almost surely. \end{thm}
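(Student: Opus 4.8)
The plan is to run the interlacement Aldous--Broder algorithm of \cref{thm:IAB} as a stationary ergodic dynamics, to exploit a one-sided monotonicity of the past of the root under this dynamics, and to feed this into the Mass-Transport Principle. I will use \cref{thm:IAB} in the form $\F = F_0 = \AB_0(\sI)$, the time-zero state of the stationary ergodic process $\langle F_t \rangle_{t \in \R}$, and I orient each $F_t$ towards infinity as there, so that every vertex has out-degree exactly one and hence a unique forward ray. A component of a forest with this property is one-ended if and only if every vertex in it has a finite past, so, writing $S_t := \{v : |\past_{F_t}(v)| = \infty\}$, the theorem is equivalent to $S_0 = \emptyset$ almost surely. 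Since $\past_{F_t}(u) \subseteq \past_{F_t}(v)$ whenever $v$ is the forward-neighbour of $u$, the set $S_t$ is closed under forward-neighbours and is therefore empty or infinite on each component of $F_t$; moreover $S_0$ is an equivariant vertex set, and a one-line mass transport shows that an equivariant vertex set the root misses almost surely is empty almost surely. So it suffices to prove $\P(\rho \in S_0) = 0$.

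Decorating the unimodular random rooted network $(G,\rho)$ by the interlacement process preserves unimodularity, so the Mass-Transport Principle may be applied to transports depending measurably on $(G,\rho,v,\sI)$. The structural input I would establish is that $t \mapsto |\past_{F_t}(\rho)|$ is non-increasing on each interval of times during which the root is not visited by the trajectory whose arrival updates the forest: when that trajectory $W$ avoids $\rho$, the forward ray of each vertex either agrees with the previous ray until it first meets $V(W)$ or is diverted into the first-entry forest of $W$, so the new past of $\rho$ is contained in the old one. The times at which $\rho$ does lie on the updating trajectory form a Poisson process of positive and almost surely finite intensity, so $\past_{F_t}(\rho)$ can grow only at these sparse ``re-routing'' times and merely shrinks in between.

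A first consequence, already requiring only the Mass-Transport Principle, is that every component of the WUSF has at most two ends almost surely. Consider the transport that sends unit mass from each vertex $u \in S_0$ along its forward edge, whose head again lies in $S_0$. The root then emits $\mathbbm{1}[\rho \in S_0]$ and receives the number of vertices of $S_0$ pointing to it, which is at least one on $\{\rho \in S_0\}$ (a ray inside an infinite locally finite past supplies an in-neighbour) and zero on $\{\rho \notin S_0\}$ (a vertex pointing to $\rho$ would force $\rho \in S_0$). Equality in this instance of the Mass-Transport Principle forces the in-degree of $\rho$ inside $S_0$ to equal $\mathbbm{1}[\rho \in S_0]$ almost surely, hence every vertex of $S_0$ has in- and out-degree exactly one within $S_0$; as $F_0$ is a forest, $S_0$ is a disjoint union of bi-infinite simple paths, and since $S_0$ meets each component of $F_0$ in a connected set, each two-ended component of the WUSF carries exactly one such path, which is its ``trunk''.

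The final and hardest step is to show $p := \P(\rho \in S_0) = 0$, i.e.\ that there are no two-ended components; this is where the dynamics enter essentially. Assuming $p > 0$, I would analyse the first-entry forest $\{-e^{W}(v) : v \in V(W)\}$ of a transient bi-infinite interlacement trajectory $W$ in order to control, conditionally on $\rho$ lying on a trunk, the size of the past of $\rho$ immediately after a re-routing through $\rho$; combined with the monotonicity above and the stationarity of $\langle F_t \rangle$, this should show that the backward end of a trunk is \emph{destructible}---with positive probability the arriving trajectories eventually re-route it so that the root's component becomes one-ended---while stationarity caps the rate at which trunks through $\rho$ can be replenished, contradicting $\P(\rho \in S_t) = p$ for all $t$. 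As in the proof of \cref{Thm:excessnotrandom}, it is natural here to pass through the number of \emph{indestructible} excessive ends, which the time evolution renders monotone. I expect this step to be the main obstacle: one must understand the combinatorics of the interlacement first-entry forest finely enough and package the conclusion into a balanced transport or a reroot-invariant event, \emph{without} any assumption on the expected degree of $\rho$---precisely the hypothesis that interlacements, unlike Wilson's algorithm, let one drop, and hence the sense in which \cref{thm:unimod} improves on \cite{H15}.
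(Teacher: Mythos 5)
Your setup is sound and runs parallel to the paper's strategy: you use the stationary dynamics $\langle \F_t\rangle = \langle \AB_t(\sI)\rangle$, you reduce the theorem to killing the past of the root, and your mass-transport argument showing that the set $S_0$ of vertices with infinite past is a disjoint union of bi-infinite trunks (hence components have at most two ends) is correct --- it reproves the input the paper simply imports from \cite{AL07}. (A minor notational point: in the paper's time convention the past \emph{shrinks} as one moves to earlier times, since $\past_{\F_{-\eps}}(v)$ is the component of $v$ in $\past_{\F_0}(v)\setminus \cI_{[-\eps,0]}$ on the event that $v$ itself is not hit; your statement of the monotonicity has the direction of $t$ reversed, but this is cosmetic.)

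The genuine gap is the final step, which you yourself flag as ``the main obstacle'': ruling out trunks. Your sketch --- show the backward end of a trunk is destructible, then argue about replenishment rates --- is not an argument, and the destructible/indestructible dichotomy of \cref{Thm:excessnotrandom} cannot by itself help here: on a general unimodular network a trunk could a priori have finite capacity, hence be indestructible, and indestructible ends are exactly the ones the dynamics never remove. The missing idea is a mass transport \emph{along the trunk}. Writing $s$ for the backward-successor map on $\mathrm{trunk}(\F_0)$, send mass $1$ from $u$ to $v=s^k(u)$ whenever the backward ray from $v$ meets $\cI_{[-\eps,0]}$. The Mass-Transport Principle (applied to the unimodular decoration $(G,\rho,\F,\cI_{[-\eps,0]})$) gives, for every $k$,
\[
\P\left(\cI_{[-\eps,0]}\cap\{s^n(\rho):n\geq 0\}\neq\emptyset \mid \rho\in\mathrm{trunk}\right)
=\P\left(\cI_{[-\eps,0]}\cap\{s^n(\rho):n\geq k\}\neq\emptyset \mid \rho\in\mathrm{trunk}\right),
\]
so the backward ray is hit if and only if it is hit infinitely often. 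Since $s^0(\rho)=\rho$ is itself hit with positive probability, the ray is hit infinitely often with positive probability, and the capacity zero--one law (\cref{L:zeroonehit2}, via \cref{L:recurrentcap}) upgrades this to almost surely; then \cref{L:criterion} (your monotonicity, made precise) finishes the proof. Without this shift-invariance transport, or something playing the same role, your argument does not close.
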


\section{Background and definitions}

\subsection{Uniform Spanning Forests} \label{Sec:bgUSF}
For each finite graph $G=(V,E)$, let $\UST_G$ denote the uniform measure on the set of spanning trees of $G$ (i.e. connected, cycle-free subgraphs of $G$ containing every vertex), which are considered for measure-theoretic purposes to be functions $E \to \{0,1\}$. More generally, for each finite network $G$, let $\UST_G$ denote the probability measure on the set of spanning trees of $G$ such that the probability of a tree is proportional to the product of the conductances of its edges.

 Let $G$ be an infinite network, and let $\langle V_n\rangle_{n\geq 0}$ be an exhaustion of $V$ by finite connected subsets, i.e.~ an increasing sequence of finite connected subsets of $V$ such that $\bigcup V_n = V$. For each $n$, let $G_n$ denote the subnetwork of $G$ induced by $V_n$, and let $G_n^*$ denote the finite network obtained by identifying (wiring) every vertex of $G$ in $V\setminus V_n$ into a single vertex $\partial_n$, and deleting the infinitely many self-loops from $\partial_n$ to itself. The \textbf{wired uniform spanning forest} measure is defined as the weak limit of the uniform spanning tree measures on $G_n^*$. That is, for every finite set $S \subset E$,
\[ \WUSF_G(S \subseteq \F) := \lim_{n\to\infty}\UST_{G_n^*}(S \subseteq T).\]
In contrast, the \textbf{free uniform spanning forest} measure is defined as the weak limit of the uniform spanning tree measures on the finite induced subnetworks $G_n$. It is easily seen that both the free and wired measures are supported on the set of \textbf{essential spanning forests} of $G$, that is, the set of cycle-free subgraphs of $G$ that contain every vertex and do not have any finite connected components. 

It will also be useful to consider oriented trees and forests. Given an infinite network $G$ with exhaustion $\langle V_n \rangle_{n\geq0}$, let $\OUST_{G_n^*}$ denote the law of a uniform spanning tree of $G_n^*$ that has been oriented towards the boundary vertex $\partial_n$, meaning that every vertex of $G_n^*$ other than $\partial_n$ has exactly one oriented edge emanating from it in the tree.  BLPS \cite{BLPS} proved that if $G$ is transient, then the measures $\OUST_{G_n^*}$ converge weakly to a measure $\OWUSF$, the \textbf{oriented wired uniform spanning forest} (OWUSF) measure. This measure is supported on the set of  essential spanning forests of $G$ that are oriented so that every vertex of $G$ has exactly one oriented edge emanating from it in the forest. The WUSF of a transient graph can be obtained from the OWUSF of the graph by forgetting the orientations of the edges. 

\subsection{The space of trajectories} \label{Sec:bgtrajectories}
Let $G$ be a graph. 
For each $-\infty \leq n \leq m \leq \infty$, let $L(n,m)$ be the graph with vertex set $\{i \in \Z : n \leq i\leq m\}$ and with edge set $\{(i,i+1): n\leq i \leq m-1\}$. We define $\cW(n,m)$ to be the set of multigraph homomorphisms from $L(n,m)$ to $G$ such that the preimage of each vertex in $G$ is finite, 
and define $\cW$ to be the union
\[ \cW:= \bigcup\left\{\cW(n,m): -\infty \leq n \leq m \leq \infty\right\}.\]
For each set $K \subseteq V$, we let $\cW_K(n,m)$ denote the set of $w\in \cW(n,m)$ that visit $K$ (that is, for which there exists $n\leq i\leq m$ such that $w(i)\in K$), and let $\cW_K$ be the union $\cW_K=\bigcup \{\cW_K(n,m): -\infty \leq n \leq m \leq \infty\}$.

Given $w\in \cW(n,m)$ and $a\leq b\in\Z$, we define  $w|_{[a,b]} \in \cW(n\vee a, m \wedge b)$ to be the restriction of $w$ to the subgraph $L(n\vee a, m \wedge b)$ of $L(n,m)$.
Given $w\in \cW_K(n,m)$, let $H^-_K(w)=\inf\{ n \leq i \leq m: w(i)\in K\}$, let $H^+_K(w)=\sup\{ n \leq i \leq m: w(i)\in K\}$, and let \[w_K=w|_{[H^-_k(w),H^+_K(w)]}\] be the restriction of $w$ to between the first it visits $K$ and last time it visits $K$.  
We equip $\cW$ with the topology generated by open sets of the form
\[\{w\in \cW : w \text{ visits K and } w_K= w'_K\},\]
where $K\subset V$ is finite and $w'\in \cW_K$. (Note that this topology is not the weakest topology making the evaluation maps $w \mapsto w(i)$ and $w \mapsto w(i,i+1)$ continuous. First and last hitting times of finite sets are not continuous with respect to that topology, but are continuous with respect to ours. The Borel $\sigma$-algebras generated by the two topologies are the same.)
We also equip $\cW$ with the Borel $\sigma$-algebra generated by this topology. 

The \textbf{time shift}  $\theta_k:\cW\to \cW$ is defined by
$\theta_k : \cW(n,m) \longrightarrow \cW(n-k,m-k)$,
\[ \theta_k(w)(i)=w(i+k), \quad  \theta_k(w)(i,i+1)= w(i+k,i+k+1).\]
The space $\cW^*$ is defined to be the quotient 
\[\cW^* = \cW / \sim \text{ where } w_1\sim w_2 \text{ if and only if } w_1 = \theta_k (w_2) \text{ for some k}.\]
Let $\pi : \cW \to \cW^*$ denote the quotient map. $\cW^*$ is equipped with the quotient topology and associated quotient $\sigma$-algebra. 
 An element of $\cW^*$ is called a \textbf{trajectory}.

\subsection{The interlacement process} \label{Sec:bginterlacement}
 Given a network $G=(G,c)$, the conductance $c(v)$ of a vertex $v$ is defined to be the sum of the conductances of the edges emanating from $v$, and, for each pair of vertices $(u,v)$, the conductance $c(u,v)$ is defined to be the sum of the conductances of the edges connecting $u$ to $v$.  
Recall that the \textbf{random walk} $X$ on the network $G$ is the Markov chain on $V$ with transition probabilities $p(u,v) = c(u,v)/c(u)$. In case $G$ has multiple edges, we will also keep track of the edges crossed by $X$, considering $X$ to be a random element of $\cW(0,\infty)$. We write either $P_u$ or $P^G_u$ for the law of the random walk started at $u$ on the network $G$, depending on whether the network under consideration is clear from context.  
 When $X$ is a random walk on a network $G$ and $K$ is a set of vertices in $G$, we let $\tau_K$ denote the first time that $X$ hits $K$ and let $\tau_K^+$ denote the first positive time that $X$ hits $K$.

Let $G=(V,E)$ be a transient network. Given $w \in \cW(n,m)$, let $w^\leftarrow \in \cW(-n,-m)$ be the reversal of $w$, defined by setting $w^\leftarrow(i)=w(-i)$ for all $-m \leq i \leq -n$, and $w^\leftarrow(i,i+1)=w(-i-1,-i)$ for all $-m \leq i \leq -n-1$. For each subset $\sA \subseteq \cW$, let $\sA^\leftarrow$ denote the set 
\[\sA^\leftarrow:= \{w \in \cW: w^\leftarrow \in \sA \}.\]
For each finite set $K\subset V$, define a measure $Q_K$ on $\cW_K$ by setting \[Q_K(\{w\in \cW: w(0)\notin K\})=0\] and, for each $u\in K$ and each two Borel subsets $\sA,\sB\in \cW$,  
\begin{multline*}
Q_K\left( \{w \in \cW: w|_{(-\infty,0]} \in \sA,\, w(0) = u \text{ and } w|_{[0,\infty)} \in \sB \}\right)\\= 
 c(u)P_u\big( \langle X_{k} \rangle_{k\geq0}\in \sA^\leftarrow \text{ and } \tau_K^+ =\infty\big) P_u\big(\langle X_k \rangle_{k\geq0} \in \sB \big). 
\end{multline*}
Let $\cW^*_K= \pi(W_K)$ be the set of trajectories that visit $K$.
\begin{thm}[Sznitman \cite{Szni10} and Teixeira \cite{Teix09}:  Existence and uniqueness of the interlacement intensity measure] Let $G$ be a transient network. There exists a unique $\sigma$-finite measure $Q^\ast$ on $\cW^*$ such that for every Borel set $\sA \subseteq \cW^*$ and every finite $K\subset V$,
\begin{equation}\label{eq:ildefn} Q^*(\sA \cap \cW^*_K) =  Q_K\left(\pi^{-1}(\sA)\right). \end{equation}
\end{thm}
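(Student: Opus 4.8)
The plan is to construct $Q^*$ from the family of measures $(Q_K)$ indexed by finite $K \subseteq V$ by a consistency-and-exhaustion argument, and to read off uniqueness from the fact that $\cW^* = \bigcup_n \cW^*_{K_n}$ for any exhaustion $\langle K_n \rangle_{n \geq 0}$ of $V$ by finite sets (every element of $\cW$ has nonempty image, which meets $V = \bigcup_n K_n$, and hence lies in some $\cW_{K_n}$). Indeed, if $Q_1^*$ and $Q_2^*$ both satisfy \eqref{eq:ildefn}, then $Q_1^*(\sA \cap \cW^*_{K_n}) = Q_{K_n}(\pi^{-1}(\sA)) = Q_2^*(\sA \cap \cW^*_{K_n})$ for every Borel $\sA \subseteq \cW^*$ and every $n$, so letting $n \to \infty$ using $\sA \cap \cW^*_{K_n} \uparrow \sA$ and continuity from below yields $Q_1^* = Q_2^*$. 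It therefore remains to construct $Q^*$.

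First I would record some elementary properties of the measures $Q_K$. Each $Q_K$ is a finite measure, of total mass $\sum_{u \in K} c(u) P_u(\tau_K^+ = \infty)$, and is carried by the set $\{w : H^-_K(w) = 0\}$ of paths whose position at time $0$ is their first visit to $K$; consequently the pushforward $Q^*_K := \pi_* Q_K$ is a finite measure carried by $\cW^*_K$, and \eqref{eq:ildefn} is equivalent to asking that $Q^*|_{\cW^*_K} = Q^*_K$ for every finite $K \subseteq V$.

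The crux of the argument --- and the step I expect to be the main obstacle --- is the following \textbf{consistency property}: if $K \subseteq K'$ are finite, then the restriction of $Q^*_{K'}$ to $\cW^*_K$ equals $Q^*_K$. I would prove this by comparing, for a trajectory that meets $K$, its canonical representative with first visit to $K'$ at time $0$ (which governs $Q_{K'}$) and its canonical representative with first visit to $K$ at time $0$ (which governs $Q_K$); these differ by the time shift $\theta_\sigma$, where $\sigma = H^-_K$ of the former. Under $Q_{K'}$ the forward part of the trajectory after the entrance time $\sigma$ is, by the strong Markov property, an ordinary random walk started from the entrance vertex $u \in K$, which is exactly the forward part prescribed by $Q_K$; so all the content lies in the backward part, or equivalently in its time reversal, a path started at $u$. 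Reversing the walk and using the reversibility $c(x) p(x,y) = c(y) p(y,x)$, the $Q_{K'}$-law of this reversed backward path takes the form: run a random walk from $u$ up to its last visit $v$ to $K'$, and then continue by an independent walk from $v$ conditioned to avoid $K'$ forever; a last-exit decomposition at the last visit to $K'$ of a walk from $u$ that avoids $K$ at all positive times (with the case of no visit to $K'$ handled separately, matching the situation $v = u$ under $Q_{K'}$) identifies this with $c(u) P_u(\,\cdot\ \text{avoids } K \text{ at all positive times})$, which is precisely the backward law under $Q_K$. The delicate points here are that transience of $G$ is what makes the conditioning events $\{\tau_K^+ = \infty\}$ carry positive mass and keeps the reversed backward paths genuinely infinite, and that one must verify $H^-_K$ and $H^-_{K'}$ are finite $Q_{K'}$-almost everywhere so that the shift $\theta_\sigma$ is well-defined, which holds because under $Q_{K'}$ both the forward and reversed backward parts are transient walks and so spend only finitely long in any finite set.

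Granting the consistency property, I would fix an exhaustion $\langle K_n \rangle$ and set $Q^*(\sA) := \lim_{n \to \infty} Q^*_{K_n}(\sA)$; this limit exists in $[0,\infty]$ because the sequence is non-decreasing, since $Q^*_{K_{n+1}}(\sA) \geq Q^*_{K_{n+1}}(\sA \cap \cW^*_{K_n}) = Q^*_{K_n}(\sA)$ by the consistency property and the fact that $Q^*_{K_n}$ is carried by $\cW^*_{K_n}$. Countable additivity of $Q^*$ follows from that of the $Q^*_{K_n}$ by interchanging the two increasing limits, and $\sigma$-finiteness follows from $Q^*(\cW^*_{K_n}) = Q^*_{K_n}(\cW^*_{K_n}) < \infty$ together with $\bigcup_n \cW^*_{K_n} = \cW^*$. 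To verify \eqref{eq:ildefn}, given finite $K$ pick $n_0$ with $K \subseteq K_{n_0}$; then for every $n \geq n_0$ the consistency property gives $Q^*_{K_n}(\sA \cap \cW^*_K) = Q^*_K(\sA)$, whence $Q^*(\sA \cap \cW^*_K) = Q^*_K(\sA) = Q_K(\pi^{-1}(\sA))$. Finally, independence of $Q^*$ from the choice of exhaustion is automatic from the uniqueness already established.
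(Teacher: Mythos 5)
The paper does not prove this theorem; it is quoted from Sznitman \cite{Szni10} and Teixeira \cite{Teix09}, so the comparison is with their argument. Your proof is correct and is essentially theirs: the uniqueness and patching steps are routine once one knows $\cW^* = \bigcup_n \cW^*_{K_n}$ and that each $Q^*_K := \pi_* Q_K$ is finite and carried by $\cW^*_K$, and the real content is exactly the consistency (compatibility) property you isolate, which is Sznitman's Theorem~1.1 (extended to transient weighted graphs by Teixeira) and is proved there just as you sketch --- re-root the $Q_{K'}$-canonical representative at its first visit to $K$, use the strong Markov property for the new forward part, and identify the new backward part via reversibility $c(x)p(x,y)=c(y)p(y,x)$ together with a last-exit decomposition of a $K$-avoiding walk from $u$ at its last visit to $K'$. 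One small imprecision: since $u\in K\subseteq K'$, the walk from $u$ always visits $K'$ (at time $0$), so the case to handle separately in the last-exit decomposition is not ``no visit to $K'$'' but ``no return to $K'$ at positive times,'' which corresponds to $v=u$ and $\sigma=0$; also the backward laws here are weighted by the indicator of $\{\tau_{K'}^+=\infty\}$ rather than conditioned on it, since the measures are unnormalized. Neither point affects the argument.
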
 The measure $Q^*$ is referred to as the  \textbf{interlacement intensity measure}.

\begin{defn}
Let $\Lambda$ denote the Lebesgue measure on $\R$. 
The \textbf{interlacement process} $\sI$ on $G$ is defined to be a Poisson point process on $\cW^* \times \R$ with intensity measure $Q^* \otimes \Lambda$. For each $t\in\R$, we denote by $\sI_t$ the set of $w\in \cW^*$ such that $(w,t)\in\sI$. We also write $\sI_{[a,b]}$ for the intersection of $\sI$ with $\cW^* \times [a,b]$. 
\end{defn}

Let $\langle V_n \rangle_{n\geq0}$ be an exhaustion of an infinite transient network $G$. 
The interlacement process on $G$ can be constructed as a limit of Poisson processes on random walk excursions from the boundary vertices $\partial_n$ to itself in the networks $G_n^*$. 

 Let $N$ be a Poisson point process on $\R$ with intensity measure $c(\partial_n)\Lambda$. Conditional on $N$, for every $t\in N$, let $W_t$ be a random walk started at $\partial_n$ and stopped when it first returns to $\partial_n$, where we consider each $W_t$ to be an element of $\cW^*$. We define $\sI^n$ to be the point process
\[\sI^n:=\left\{(W_t,t) : t \in N\right\}.\]

\begin{prop}\label{Prop:intexhaust}
Let $G$ be an infinite transient network and let $\langle V_n \rangle_{n\geq 0}$ be an exhaustion of $G$. Then the Poisson point processes $\sI^n$ converge in distribution to the interlacement process $\sI$  as $n\to\infty$.
\end{prop}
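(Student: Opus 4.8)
The plan is to show convergence in distribution of Poisson point processes by identifying the limiting intensity measure, using the standard criterion that Poisson processes on a Polish space converge weakly provided their intensity measures converge appropriately on a generating family of sets that have finite mass. First I would fix a finite set $K \subset V$ and a bounded time interval $[a,b] \subset \R$, and restrict attention to the point process $\sI^n_{[a,b]}$ of those excursions of $\sI^n$ that hit $K$, each excursion being recorded as an element of $\cW^*_K \times [a,b]$; note that for $n$ large enough that $K \subset V_n$, a $\partial_n$-to-$\partial_n$ excursion in $G_n^*$ that visits $K$, when chopped to $w_K$ (its trace between the first and last visits to $K$), lies in $\cW_K$, and this restricted process is again a Poisson process. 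The key computation is that the intensity measure $Q^n_K$ of this restricted process — that is, $c(\partial_n)$ times $\Lambda|_{[a,b]}$ times the law of the $K$-trace of a single $\partial_n$-to-$\partial_n$ excursion that is conditioned to hit $K$, weighted by the probability of hitting $K$ — converges to $Q_K \otimes \Lambda|_{[a,b]}$, the corresponding restriction of the limiting intensity measure $Q^* \otimes \Lambda$ via \eqref{eq:ildefn}. Once the intensities converge (in a suitable sense, e.g. setwise on a convergence-determining class, or weakly together with convergence of total masses), convergence in distribution of the Poisson processes follows from the mapping theorem / the characterization of Poisson process convergence via Laplace functionals; finally one upgrades from each fixed $(K,[a,b])$ to the full process by a standard projective/monotone-class argument, since the events and test functions measurable with respect to some $\cW^*_K \times [a,b]$ generate the relevant $\sigma$-algebra and the interlacement process is determined by its restrictions.

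The substance is therefore the intensity computation, which I would carry out by a last-exit (reversal) decomposition of the excursion at its last visit to $K$. An excursion from $\partial_n$ in $G_n^*$ that visits $K$ can be decomposed at the last time it is in $K$: the portion after that last visit is a path from some $u \in K$ that never returns to $K$ and is eventually absorbed at $\partial_n$, while the portion up to and including the last visit, read in reverse from $u$, is — after using reversibility of the walk with respect to the conductances and the identity relating hitting $\partial_n$ in $G_n^*$ to transience of the walk in $G$ — asymptotically a transient path from $u$ in $G$ conditioned to avoid returning to $K$. Multiplying by $c(\partial_n)$ exactly cancels the escape probability to $\partial_n$ in the denominator that appears from the reversal, and the surviving factor is $c(u)\, P_u(\tau_K^+ = \infty)$ times the forward law; summing over $u \in K$ and matching the two halves of the path against the definition of $Q_K$ gives convergence of $Q^n_K$ to $Q_K$. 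The relevant limiting facts here — that the probability a walk from $u$ avoids $K$ and is absorbed at $\partial_n$ converges to $P_u(\tau_K^+ = \infty)$, and that the reversed pre-last-exit path converges in law to the forward walk conditioned on $\{\tau_K^+ = \infty\}$ — are exactly the ingredients used by Teixeira \cite{Teix09} (and Sznitman \cite{Szni10}) in the original construction of $Q^*$; indeed this proposition is essentially their construction repackaged in the present notation, so I would cite \cite{Teix09,Szni10} for these estimates rather than reprove them.

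The main obstacle I anticipate is bookkeeping rather than any genuine difficulty: one must be careful that the space $\cW^*$ with the topology of \cref{Sec:bgtrajectories} is Polish (or at least that Poisson-process convergence theory applies), that the maps $w \mapsto w_K$ are continuous at the relevant points and the sets $\cW^*_K$ are open/measurable with the right properties, and that the $\sigma$-finite (not finite) nature of $Q^*$ does not cause problems — this is handled precisely by localizing to finite $K$ and bounded time windows, where all measures in sight are finite, before passing to the limit in $K$ and $[a,b]$. A secondary point requiring care is the treatment of multiple edges and self-loops, both in defining the excursions of the walk on $G_n^*$ (self-loops at $\partial_n$ are deleted, matching the definition of $G_n^*$ in \cref{Sec:bgUSF}) and in tracking which edge is crossed, so that $\sI^n$ and $\sI$ genuinely live on the same space $\cW^* \times \R$; but again this is a matter of matching conventions already fixed earlier in the paper.
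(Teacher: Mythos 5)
Your overall strategy is exactly the paper's: for each finite $K$ one computes the intensity measure $Q^n_K$ of the excursions of $\sI^n$ that hit $K$, shows via reversibility that $\sI^n$ restricted to $\cW^*_K\times\R$ is Poisson with intensity $Q^{n*}\otimes\Lambda$, and concludes by noting that $Q^n_K$ converges weakly to $Q_K$, which determines $Q^*$ through \eqref{eq:ildefn}. However, the central computation as you describe it is incorrect. You decompose each excursion at its \emph{last} visit to $K$ and assert both that the portion after that visit avoids $K$ \emph{and} that the portion before it, read in reverse, is ``conditioned to avoid returning to $K$'' (equivalently, that ``the reversed pre-last-exit path converges in law to the forward walk conditioned on $\{\tau_K^+=\infty\}$''). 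Only one half of the trajectory can carry the $K$-avoidance: in a last-exit decomposition it is the post-$K$ portion that avoids $K$ and contributes the factor involving $P_u(\tau_K^+=\infty)$ in the limit, while the reversed pre-last-exit portion converges to an \emph{unconditioned} walk started at $u$. As written, your intensity measure would be supported on trajectories that visit $K$ exactly once, which is not $Q_K$. The paper instead uses the first-entrance decomposition, reparametrizing each excursion so that it first hits $K$ at time $0$; this matches the definition of $Q_K$ verbatim, since there the reversed pre-entrance half carries the event $\{\tau_K^+>\tau_{\partial_n}\}$, which converges to $\{\tau_K^+=\infty\}$, and the forward half is unconditioned.

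A second, smaller issue: even a correctly executed last-exit decomposition produces a measure on $\cW$ parametrized by the last visit to $K$, which is a genuinely different measure on $\cW$ from the paper's $Q_K$ (parametrized by the first visit); the two agree only after projection to $\cW^*$ by $\pi$. So ``matching the two halves against the definition of $Q_K$'' requires either this extra identification or a switch to the first-entrance convention. The remainder of your outline --- restricting to finite $K$ and bounded time windows to stay in the finite-intensity regime, invoking the splitting property and the Laplace-functional criterion for Poisson convergence, and passing to the full process because the sets $\cW^*_K$ generate the relevant $\sigma$-algebra --- is sound and is essentially what the paper does, though the paper leaves most of the topological bookkeeping implicit.
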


A similar construction of the random interlacement process is sketched in \cite[\S 4.5]{Szni12book}. 

\begin{proof}
Let $K \subset V$ be finite, and let $n$ be sufficiently large that $K$ is contained in $V_n$. Define a measure $Q^n_K$ on $\cW$ by setting
\[Q^n_K\left(\{w\in\cW: w(0)\notin K\}\right)=0\]
and, for each $u\in K$, each $r,m\geq 0$ and each two Borel subsets $\sA,\sB\in \cW$, 
\begin{multline}\label{eq:Qkndefn}
Q_K^n\left( \left\{w \in \cW(-r,m): w|_{[-r,0]} \in \sA, w(0) = u \text{ and } w|_{[0,m]} \in \sB \right\}\right)\\= 
 c(u)P^{G_n^*}_u\big( \langle X_k \rangle_{k=0}^r\in\sA^\leftarrow \text{ and } \tau_K^+ > \tau_{\partial_n} =r \big) P^{G_n^*}_u\big(\langle X_k \rangle_{k=0}^m \in \sB  \text{ and } \tau_{\partial_n} = m\big).
\end{multline}
By reversibility of the random walk, the right-hand side of \eqref{eq:Qkndefn} is equal to
\begin{multline}
  c(\partial_n)P^{G_n^*}_{\partial_n}\big( \langle X_k \rangle_{k=0}^n\in\sA \text{ and } X_{\tau_K}=u \text{ and } \tau_K=r < \tau_{\partial_n}^+\big)\\ \cdot P^{G_n^*}_u\big(\langle X_k \rangle_{k=0}^m \in \sB  \text{ and } \tau_{V \setminus V_n} = m\big).\end{multline}
  It follows that $Q_K^n(\cW)= c(\partial_n)P^{G_n^*}_{\partial_n}(\tau_K<\tau^+_{\partial_n})$ and that the normalized measure $Q_K^n/Q_K^n(\cW)$ coincides with the law of a random walk excursion from $\partial_n$  to itself in $G_n^*$ that has been conditioned to hit $K$ and reparameterised so that it first hits $K$ at time $0$.
Thus, by the splitting property of Poisson processes, 
 $\sI^n$ is a Poisson point process on $\cW^*\times\R$ with intensity measure $Q^{n*}\otimes\Lambda$, where $Q^{n*}$ satisfies
\[ Q^{n*}(\sA \cap \cW^*_K) =  Q^n_K\left(\pi^{-1}(\sA)\right). \]
We conclude the proof by noting that $Q_K^n$ converges weakly to $Q_K$ as $n\to\infty$.
\end{proof}

\subsection{Hitting Probabilities}\label{Sec:bghitting}
Recall that the \textbf{capacity} (a.k.a.\ the \textbf{conductance to infinity}) of a finite set of vertices $K$ in a network $G$ is defined to be 
\[ \Cap(K) = \sum_{v\in K} c(v)P_v(\tau_K^+ = \infty),\]
and observe that 
 $Q_K(\cW)=\Cap(K)$ for every finite set of vertices $K$. If $K$ is infinite, we define 
 $\Cap(K) = \lim_{n\to\infty} \Cap(K_n)$,  
 where $K_n$ is any increasing sequence of finite sets of vertices with $\bigcup K_n = K$.  
We say that a set $K$ of vertices is \textbf{hit} by $\sI_{[a,b]}$ if there exists $(W,t)\in\sI_{[a,b]}$ such that $W$ hits $K$. By the definition of $\sI$, we have that
\[ \P(K \text{ hit by } \sI_{[a,b]}) = 1-\exp\left(-(b-a)Q_K(\cW)\right) = 1-\exp\left(-(b-a)\Cap(K)\right) \]
for each finite set $K$. This formula extends to infinite sets by taking limits over exhaustions: If $K \subseteq V$ is infinite, let $K_n$ be an exhaustion of $K$ by finite sets. Then
\begin{align*}\P(K \text{ hit by } \sI_{[a,b]}) &= \lim_{n\to\infty}\P(K_n \text{ hit by } \sI_{[a,b]})\\&=1-\lim_{n\to\infty}\exp\left(-(b-a)\Cap(K_n)\right) = 1-\exp\left(-(b-a)\Cap(K)\right).\end{align*}
Similarly, the expected number of trajectories in $\sI_{[a,b]}$ that hit $K$ is equal to $(b-a)\Cap(K)$. 
We apply the above formulas to deduce the following simple 0-1 law.
\begin{lem}\label{L:zeroonehit} Let $G$ be a transient network, let $\sI$ be the interlacement process on $G$. Then for all $a<b \in \R$ and every set of vertices $K\subseteq V$, we have
\[\P\left(\text{$K$} \textrm{ is hit by infinitely many trajectories in } \sI_{[a,b]}\right) = \mathbbm{1}\left(\Cap(K)=\infty\right).\] 
\end{lem}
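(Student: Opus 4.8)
The plan is to dispatch the two cases $\Cap(K)<\infty$ and $\Cap(K)=\infty$ separately, using only the two identities established immediately before the lemma --- that $\P(K\text{ hit by }\sI_{[a,b]})=1-\exp(-(b-a)\Cap(K))$ and that the expected number of trajectories in $\sI_{[a,b]}$ hitting $K$ equals $(b-a)\Cap(K)$ --- together with two standard facts about Poisson point processes: a nonnegative integer-valued random variable with finite expectation is almost surely finite, and the restrictions of $\sI$ to disjoint Borel subsets of $\cW^*\times\R$ are independent.

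When $\Cap(K)<\infty$, the expected number of trajectories in $\sI_{[a,b]}$ hitting $K$ is $(b-a)\Cap(K)<\infty$ (here we use $a<b$), so this number is finite almost surely and the probability in question is $0=\mathbbm 1(\Cap(K)=\infty)$, as required. The substantive case is $\Cap(K)=\infty$, and the only real point is that knowing $K$ is hit by $\sI_{[a,b]}$ almost surely is not by itself enough --- one must produce infinitely many hitting trajectories. To do this I would fix $N\geq 1$, partition $[a,b]$ into $N$ disjoint subintervals $I_1,\dots,I_N$ of length $(b-a)/N$, and apply the hitting-probability identity to each time interval $I_k$: since $|I_k|>0$ and $\Cap(K)=\infty$ we get $\P(K\text{ hit by }\sI_{I_k})=1-\exp(-|I_k|\Cap(K))=1$. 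As the processes $\sI_{I_1},\dots,\sI_{I_N}$ are independent, almost surely $K$ is hit by some trajectory of $\sI_{I_k}$ for every $k$; since the $I_k$ are pairwise disjoint, these yield $N$ distinct trajectories in $\sI_{[a,b]}$ hitting $K$. Thus the number of trajectories in $\sI_{[a,b]}$ hitting $K$ is almost surely at least $N$ for every $N\in\N$, hence almost surely infinite, and the probability equals $1=\mathbbm 1(\Cap(K)=\infty)$.

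I do not foresee a genuine obstacle here; the one point requiring a little care --- and the reason for the time-subdivision step --- is precisely that the event ``$K$ is hit'' is weaker than ``$K$ is hit infinitely often''. An alternative to time-subdivision would be to take an exhaustion $K_n\uparrow K$ of $K$ by finite sets and use that the number of trajectories in $\sI_{[a,b]}$ hitting the finite set $K_n$ is Poisson with mean $(b-a)\Cap(K_n)\to\infty$, so that for each $m$ the probability of at most $m$ trajectories hitting $K$ is bounded by $\P(\mathrm{Poisson}((b-a)\Cap(K_n))\le m)\to 0$; I would present the time-subdivision version since it invokes only the two formulas already in hand.
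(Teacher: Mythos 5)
Your proof is correct and follows essentially the same route as the paper: the finite-capacity case via finiteness of the expected number of hitting trajectories, and the infinite-capacity case via subdividing time into disjoint positive-length intervals each of which almost surely contributes a hitting trajectory (the paper uses the infinite dyadic family $[b-2^{-n},b-2^{-n-1}]$ directly rather than $N$ equal pieces with $N\to\infty$, but this is an immaterial variation).
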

\begin{proof}
If $\Cap(K)$ is finite then the expected number of trajectories in $\sI_{[a,b]}$ that hit $K$ is finite, so that the number of trajectories in $\sI_{[a,b]}$ that hit $K$ is finite a.s.
Conversely, if $\Cap(K)$ is infinite, then there is a trajectory in $\sI_{[b-2^{-n},b-2^{-n-1}]}$ that hits $K$ a.s.\ for every $n\geq 1$ a.s. Since $b-2^{-n}\geq a$ for all but finitely many $n$, it follows that  $\sI_{[a,b]}$ hits $K$ infinitely often a.s. 
\end{proof}

We next prove that any set that has a positive probability to be hit infinitely often by any single trajectory will in fact be hit by infinitely many trajectories. 
Recall the \textbf{method of random paths} \cite[Theorem 10.1]{PeresClimb}: If $G$ is an infinite network, $A$ is a finite subset of $G$ and $\Gamma$ is a random infinite simple path in $G$ starting at $A$, then 
\[\Cap(A)^{-1} \leq \sum_{e \in E}\P(e\in \Gamma)^2. \]
In particular, if the sum on the right hand side is finite for some random infinite simple path $\Gamma$ starting at $A$, then the capacity of $A$ is positive and $G$ is therefore transient. Moreover, for every finite set $A$ in a transient network, there exists a random infinite simple path $\Gamma$ starting in $A$ such that 
\[\Cap(A)^{-1} = \sum_{e \in E}\P(e\in \Gamma)^2.\]

The following lemma is presumably well-known, but we were unable to find a reference.

\begin{lem}\label{L:recurrentcap}
Let $G$ be a transient network and let $K\subseteq V$. If the random walk on $G$ hits $K$ infinitely often with positive probability, then $\Cap(K)=\infty$.
\end{lem}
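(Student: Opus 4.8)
The plan is to argue by contradiction: suppose the random walk hits $K$ infinitely often with positive probability, and suppose that $\Cap(K)<\infty$. The idea is to convert the finiteness of $\Cap(K)$ into a function of finite Dirichlet energy that is identically $1$ on $K$ and ``vanishes at infinity'', and then to invoke the fact that such functions must tend to zero along the random walk — which is impossible if the walk returns to $K$ infinitely often.

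First I would build the relevant function. Fix an exhaustion $\langle K_n\rangle_{n\ge 1}$ of $K$ by finite sets and let $h_n(x)=P_x(\tau_{K_n}<\infty)$ be the equilibrium potential of $K_n$; this is harmonic off $K_n$, equal to $1$ on $K_n$, has Dirichlet energy $\mathcal E(h_n)=\Cap(K_n)\le \Cap(K)<\infty$, and lies in the space $\mathcal D_0$ of finite-energy functions approximable in Dirichlet norm by finitely supported functions (as is standard for equilibrium potentials of finite sets in a transient network). Since $\tau_{K_n}\downarrow\tau_K$, the $h_n$ increase pointwise to $h(x):=P_x(\tau_K<\infty)$, which is identically $1$ on $K$. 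Writing $\mathrm{es}_{K_n}$ for the equilibrium measure of $K_n$ (of total mass $\Cap(K_n)$) and $\langle\cdot,\cdot\rangle_{\mathcal E}$ for the Dirichlet inner product, the identity $\langle h_m,h_n\rangle_{\mathcal E}=\sum_{v\in K_n}\mathrm{es}_{K_n}(v)\,h_m(v)=\Cap(K_n)$ for $m\ge n$ (valid since $h_m\equiv1$ on $K_n\subseteq K_m$) yields $\mathcal E(h_m-h_n)=\Cap(K_m)-\Cap(K_n)\to 0$; hence $\langle h_n\rangle$ is Cauchy in the Dirichlet norm (which, as $G$ is transient, is a genuine norm on $\mathcal D_0$), so $h\in\mathcal D_0$ with $\mathcal E(h)=\Cap(K)<\infty$.

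Now I would invoke the classical fact that every $f\in\mathcal D_0$ satisfies $f(X_n)\to 0$ almost surely for the random walk $\langle X_n\rangle_{n\ge0}$ from any starting vertex; applied to $f=h$ this gives $h(X_n)\to 0$ a.s., contradicting the fact that $h(X_n)=1$ for the infinitely many $n$ with $X_n\in K$. This contradiction proves that $\Cap(K)=\infty$. The one non-trivial ingredient is the vanishing fact $f\in\mathcal D_0\Rightarrow f(X_n)\to 0$ a.s.; its mechanism is the energy estimate $E_o\!\big[\sum_n(g(X_{n+1})-g(X_n))^2\big]\le C_o\,\mathcal E(g)$ for finite-energy $g$ (immediate from the bound $\sum_n p_n(o,y)\le C_o'\,c(y)$ for all $y$ together with Cauchy--Schwarz), which one upgrades via a maximal inequality to a bound on $E_o[\sup_n g(X_n)^2]$ in terms of $\mathcal E(g)$ and $g(o)$; approximating $f$ in Dirichlet norm by finitely supported functions (which vanish along the walk by transience) and applying Borel--Cantelli then finishes. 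I expect this maximal-inequality step — promoting control of the increments of $g(X_n)$ to control of $\sup_n g(X_n)$ — to be the main obstacle; a secondary point is checking carefully that $h$ really lies in $\mathcal D_0$ rather than merely having finite Dirichlet energy, which (via the Cauchy estimate above) is precisely where the hypothesis $\Cap(K)<\infty$ is used.
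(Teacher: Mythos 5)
Your argument is correct, but it takes a genuinely different route from the paper's. You work in the Dirichlet space: assuming $\Cap(K)<\infty$, you build the equilibrium potential $h(x)=P_x(\tau_K<\infty)$ as the $\mathcal{E}$-limit of the potentials $h_n$ of finite sets $K_n\uparrow K$, using the Gauss-type identity $\langle h_m,h_n\rangle_{\mathcal E}=\Cap(K_n)$ for $m\ge n$ to get $\mathcal E(h_m-h_n)=\Cap(K_m)-\Cap(K_n)\to0$ — this is exactly where finiteness of $\Cap(K)$ enters — and then you invoke the Ancona--Lyons--Peres theorem that functions in $\mathcal D_0$ tend to $0$ along random walk trajectories a.s., contradicting $h\equiv1$ on the infinitely-often-visited set $K$. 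That convergence theorem is a real and citable result (it appears in the Lyons--Peres book, which the paper already references), though as you note its proof — upgrading the increment energy estimate to a maximal or crossing inequality — is the one substantial black box; if you intend to prove rather than cite it, be aware the standard argument uses crossing estimates rather than a direct Doob-type maximal inequality, since $h(X_n)$ is not a martingale. The paper instead argues in the opposite direction and stays entirely within the method of random paths: it first reduces the "positive probability" hypothesis to "almost sure" recurrence of $K$ via a Doob $h$-transform (using Rayleigh monotonicity, since $h\le1$, to compare capacities), then passes to the induced walk on $K$, observes that capacities of finite subsets of $K$ are the same in $G$ and in the induced network, and lets the tails $\Gamma_n$ of a single energy-minimizing random path force $\Cap(K_n)\to\infty$. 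Your approach buys a cleaner logical structure (no $h$-transform reduction, the contradiction works directly on a positive-probability event) at the cost of importing a deeper theorem; the paper's is longer but self-contained given only the method of random paths already set up in its background section.
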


\begin{proof}
Let $c$ be the conductance function of $G$.
First suppose that $K$ is hit infinitely often with probability one. 
Let $X=\langle X_i \rangle_{i\geq0}$ be a random walk on $G$ started at a vertex of $K$, and let $N_i$ be the $i$th time that $X$ visits $K$. 
Define
\[c_K(u,v) = c_K(u)P_u(X_{N_1}=v) \]
for all $u,v \in K$, so that $X_{N_i}$ is the random walk on the (non-locally finite) network $H:=((K,K^2),c_K)$. Note that if $A$ is a finite subset of $K$, then the capacity of $A$ considered as a set of vertices in $H$ is the same as the capacity of $A$ considered as a set of vertices in $G$. 
That is,
\[ \Cap(A) = \sum_{v\in A}c(v)P_v(\text{$\langle X_i \rangle_{i\geq0}$ returns to $A$}) = \sum_{v\in A}c_K(v)P_v(\text{$\langle X_{N_i} \rangle_{i\geq0}$ returns to $A$}). \]
Let $\langle K_n \rangle_{n\geq1}$ be an increasing sequence of finite sets with $\bigcup_{n\geq1} K_n = K$, and let $\Gamma$ be a random infinite simple path in $H$ starting at $K_1$ such that
\[\frac{1}{2}\sum_{u,v\in V} \P(\{u,v\} \in \Gamma)^2 = \Cap(K_1)^{-1} <\infty.\]
For each $n\geq2$, let $\Gamma_n$ be the subpath of $\Gamma$ beginning at the last time $\Gamma$ visits $K_n$. Then, by the monotone convergence theorem,
\[\Cap(K) = \lim_{n\to\infty}\Cap(K_n) \leq \lim_{n\to\infty}\Bigg(\frac{1}{2}\sum_{u,v \in V} \P(\{u,v\} \in \Gamma_n)^2\Bigg)^{-1} = \infty.\]
This concludes the proof in this case.

Now suppose that $K$ is hit infinitely often by the random walk on $G$ with positive probability, and, for each vertex $u$ of $G$, let $h(u)$ be the probability that a random walk on $G$ started at $u$ hits $K$ infinitely often. 
We have that, for each two vertices $u$ and $v$ of $G$, 
\[P_u(X_1=v \mid X \text{ hits $K$ infinitely often}) = \frac{P_u(X_1=v)P_v(X \text{ hits $K$ infinitely often})}{P_u(X \text{ hits $K$ infinitely often})} = \frac{h(v)}{h(u)}c(u,v).  \]
It follows by an elementary calculation that the random walk on $G$ conditioned to hit $K$ infinitely often is reversible, and is equal to the random walk on the network $(G,\hat c)$, where the conductances $\hat c$ are defined by
\[\hat c(u,v) = c(u,v)h(u)h(v), \quad \hat c(u) = c(u) h(u)^2.\]
This is an example of Doob's $h$-transform. Since $h\leq 1$, Rayleigh monotonicity implies that the capacity of $K$ with respect to the $h$-transformed conductances $\hat c$ is less than the capacity of $K$ with respect to the original conductances $c$. Thus, we may conclude the proof by applying the argument of the previous paragraph to the $h$-transformed network. \qedhere
\end{proof}

The following lemma is an immediate consequence of \cref{L:zeroonehit} and \cref{L:recurrentcap}.

\begin{lem}\label{L:zeroonehit2} Let $G$ be a transient network, let $\sI$ be the interlacement process on $G$. Then for all $a<b \in \R$ and every set of vertices $K\subseteq V$, we have
\begin{multline*}
\P\left(\textrm{infinitely many vertices of $K$} \textrm{ are hit by } \sI_{[a,b]}\right) =\\
\P\left(\text{$K$} \textrm{ is hit by infinitely many trajectories in } \sI_{[a,b]}\right) = \mathbbm{1}\left(\Cap(K)=\infty\right).
\end{multline*} 
\end{lem}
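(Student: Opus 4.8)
The plan is to deduce the statement from \cref{L:zeroonehit} and \cref{L:recurrentcap} by treating separately the cases $\Cap(K)=\infty$ and $\Cap(K)<\infty$. The second equality is exactly \cref{L:zeroonehit}, so all the work goes into showing that the probability that infinitely many \emph{vertices} of $K$ are hit by $\sI_{[a,b]}$ equals $\mathbbm{1}(\Cap(K)=\infty)$.

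First suppose $\Cap(K)=\infty$. For each single vertex $v$ we have $\Cap(\{v\})\leq c(v)<\infty$ since $G$ is transient and locally finite, so \cref{L:zeroonehit} tells us that $\sI_{[a,b]}$ contains only finitely many trajectories hitting $v$, almost surely; as $V$ is countable, almost surely every vertex of $K$ is hit by only finitely many trajectories of $\sI_{[a,b]}$. On this almost sure event, if only finitely many vertices of $K$ were hit then $K$ itself would be hit by only finitely many trajectories (each trajectory hitting $K$ hits one of those finitely many vertices), contradicting \cref{L:zeroonehit}, which says that $K$ is hit by infinitely many trajectories almost surely when $\Cap(K)=\infty$. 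Hence infinitely many vertices of $K$ are hit almost surely.

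Now suppose $\Cap(K)<\infty$. The expected number of trajectories in $\sI_{[a,b]}$ hitting $K$ is $(b-a)\Cap(K)<\infty$, so almost surely only finitely many trajectories in $\sI_{[a,b]}$ hit $K$; it therefore suffices to show that each such trajectory visits $K$ only finitely often, since this forces it to visit only finitely many vertices of $K$. Equivalently, we must check that $Q_K$-a.e.\ $w\in\cW_K$ visits $K$ only finitely often. By the definition of $Q_K$, such a $w$ has $w(0)=u\in K$ and decomposes into a negative part $w|_{(-\infty,0]}$, which after reversal is a random walk from $u$ conditioned never to return to $K$ and hence visits $K$ only at time $0$, together with a positive part $w|_{[0,\infty)}$ that is an ordinary random walk started at $u$. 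Since $\Cap(K)<\infty$, the contrapositive of \cref{L:recurrentcap} (using connectedness of $G$ to pass between starting vertices) shows the random walk on $G$ visits $K$ only finitely often almost surely, so the positive part of $w$ visits $K$ only finitely often $Q_K$-a.e., and hence so does $w$. Thus, when $\Cap(K)<\infty$, only finitely many vertices of $K$ are hit by $\sI_{[a,b]}$ almost surely, which matches $\mathbbm{1}(\Cap(K)=\infty)$ and completes the proof.

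The only genuinely delicate point is the last one: that a single interlacement trajectory hitting $K$ visits only finitely many vertices of $K$ when $\Cap(K)<\infty$. This is exactly where \cref{L:recurrentcap} is needed rather than just \cref{L:zeroonehit}; everything else is bookkeeping with the Poisson structure of $\sI$, countability of $V$, and the explicit forward/backward decomposition built into $Q_K$.
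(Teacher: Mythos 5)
Your proof is correct and is precisely the argument the paper intends: the paper offers no proof at all, stating only that the lemma is an immediate consequence of \cref{L:zeroonehit} and \cref{L:recurrentcap}, and you have supplied exactly the missing bookkeeping (finitely many hits per vertex when $\Cap(K)=\infty$; finitely many trajectories plus finitely many visits per trajectory when $\Cap(K)<\infty$). The one small repair needed is in the final step, since the paper defines $Q_K$ only for \emph{finite} $K$: exhaust $K$ by finite sets $K_n$, decompose each trajectory hitting $K_n$ under $Q_{K_n}$, and observe that both the forward part and the reversed backward part have laws absolutely continuous with respect to the unconditioned walk started at $w(0)$, which visits $K$ only finitely often a.s.\ by the contrapositive of \cref{L:recurrentcap}.
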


\medskip

\section{Interlacement Aldous-Broder}\label{Sec:IAB}
In this section we describe the Interlacement Aldous-Broder algorithm and investigate its basic properties. 
Let $G$ be an infinite transient network. For each set $A\subseteq \cW^*\times\R$, and each vertex $v\in V$, define
\[ \tau_t(A,v):=\inf\left\{s\geq t: \exists (W,s) \in A \text{ such that } W \text{ hits }v \right\}.\]
Let $\sI$ be the interlacement process on $G$ and write $\tau_t(v)=\tau_t(\sI,v)$.
Let $\cA$ be the set of subsets $A$ of $\cW^*\times \R$ that satisfy the property that for every 
  for every vertex $v \in V$ and every $t\in \R$ there exists a unique trajectory $W_{\tau_t(A,v)}$ such that $(W_{\tau_t(A,v)},\tau_t(A,v))\in A$ and $W_{\tau_t(A,v)}$ hits $v$. It is clear that $\sI\in\cA$ a.s. Define $e_t(v)=e_t(\sI,v)$ to be the oriented edge pointing into $v$ that is traversed by the trajectory $W_{\tau_t(v)}$ as it enters $v$ for the first time. 
 For each $t\in\R$ and $T\in (t,\infty]$, we define the set $\mathsf{AB}_t^T(\sI)\subseteq E$ by
\begin{equation}\label{eq:intABdefn}\mathsf{AB}_t^T(\sI) := \left\{ -e_t(v) : v \in V,\, \tau_t(v) \leq T\right\}.\end{equation}
We write $\AB_t(\sI)=\AB_t^\infty(\sI)$. We define $\AB_t^T(A)$ similarly for all $A \in \cA$. Let $\langle V_n \rangle_{n\geq0}$ be an exhaustion of $G$, and for each $n\geq 0$ let $\sI^n$ be defined as in \cref{Sec:bginterlacement}. Since the process $\sI^n$ is just a decomposition of a single random walk trajectory into excursions, 
\cref{thm:IAB} follows immediately from the following lemma together with the correctness of the classical Aldous-Broder algorithm.
\begin{lem} Let $G$ be a transient network with exhaustion $\langle V_n \rangle_{n\geq0}$ and let $\sI$ be the interlacement process on $G$. Then
 $\AB_t^T(\sI^n)$ converges weakly to $\AB_t^T(\sI)$ for each $t\in \R$ and $T\in[t,\infty]$.
\end{lem}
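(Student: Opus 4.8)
The plan is to derive this from the convergence $\sI^n\Rightarrow\sI$ of \cref{Prop:intexhaust} by a continuous-mapping argument, after reducing to a finite-dimensional statement. Since $\AB_t^T(\cdot)$ is a random subset of the countable set of oriented edges of $G$, weak convergence is equivalent to convergence of all cylinder probabilities, and an oriented edge $e$ lies in $\AB_t^T(\cdot)$ exactly when $e=-e_t(e^-)$ and $\tau_t(e^-)\leq T$. Hence the joint law of $\bigl(\mathbbm{1}[e\in\AB_t^T(\cdot)]\bigr)_{e\in S}$, for a finite set $S$ of oriented edges, is a fixed function of the law of $\bigl(\tau_t(\cdot,v),e_t(\cdot,v)\bigr)_{v\in U}$ with $U=\{e^-:e\in S\}$ finite, and the lemma reduces to proving
\[\bigl(\tau_t(\sI^n,v),e_t(\sI^n,v)\bigr)_{v\in U}\ \Longrightarrow\ \bigl(\tau_t(\sI,v),e_t(\sI,v)\bigr)_{v\in U}\qquad\text{for every finite }U\subseteq V,\]
with the $\tau$'s valued in $(t,\infty)$ (a.s.\ finite, as noted next) and the $e$'s in the discrete set of oriented edges. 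The passage back to the cylinder events of $\AB_t^T$ is harmless: for $T<\infty$ it only uses that $\tau_t(\sI,v)$ has no atom at $T$; for $T=\infty$ no such comparison arises; and $T=t$ is trivial, since then both sides are empty almost surely.

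I would then record two facts, both read directly off the proof of \cref{Prop:intexhaust}. For $v\in V_n$, the trajectories of $\sI^n$ that hit $v$ project in their time coordinate to a Poisson process of rate $r_n(v):=Q^n_{\{v\}}(\cW)=c(\partial_n)P^{G_n^*}_{\partial_n}(\tau_v<\tau_{\partial_n}^+)$, the effective conductance from $v$ to $V\setminus V_n$ in $G$; likewise the trajectories of $\sI$ hitting $v$ project to a Poisson process of rate $\Cap(\{v\})$. Moreover the $r_n(v)$ decrease to $\Cap(\{v\})$, which is positive by transience, so $\Cap(\{v\})\leq r_n(v)\leq c(v)$ for all $n$ with $v\in V_n$. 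Consequently $\tau_t(\sI^n,v)-t$ and $\tau_t(\sI,v)-t$ are exponential of rates $r_n(v)$ and $\Cap(\{v\})$ — in particular a.s.\ finite — and one has the uniform tail bound $\sup_n\P\bigl(\tau_t(\sI^n,v)>s\bigr)\leq e^{-(s-t)\Cap(\{v\})}\to0$ as $s\to\infty$.

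The main step is a truncated continuous-mapping argument. Fix a finite $K\subseteq V$ containing $U$ together with every neighbour of a vertex of $U$, and fix $b\in(t,\infty)$. By the proof of \cref{Prop:intexhaust}, the restriction of $\sI^n$ to $\cW^*_K\times[t,b]$ is a \emph{finite} Poisson process whose intensity measure converges weakly — in total mass too, since $Q^n_K(\cW)\to\Cap(K)$ — to that of the corresponding restriction of $\sI$, so these restrictions converge in distribution; pushing each trajectory $W$ to its fragment $W_K$, namely its restriction to between its first and last visit to $K$, preserves the convergence, because this map is continuous by the definition of the topology on $\cW^*$ and takes values in a countable discrete space. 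Now the truncated vector $\bigl(\tau_t(\cdot,v)\wedge b,\ e_t(\cdot,v)\mathbbm{1}[\tau_t(\cdot,v)\leq b]\bigr)_{v\in U}$ is a measurable function of this fragment point process: $\tau_t(\cdot,v)\leq b$ holds iff some fragment visiting $v$ carries a time $\leq b$, the least such time is $\tau_t(\cdot,v)$, and the edge by which the minimizing trajectory enters $v$ for the first time is determined by its fragment, since that edge joins two vertices of $K$ and is traversed between the fragment's first and last visit to $K$. This function is continuous at every realization with pairwise distinct relevant times, none equal to $b$, which holds almost surely for $\sI$; so the continuous mapping theorem yields convergence of the truncated vectors for each fixed $b$. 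A routine $\varepsilon/3$ argument using the uniform tail bound — so that $\P(\exists v\in U:\tau_t(\sI^n,v)>b)\to0$ as $b\to\infty$, uniformly in $n$, and likewise for $\sI$ — then removes the truncation, giving the reduced statement and hence the lemma.

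I expect the main obstacle to be the ``locality'' verification in the last step: both that $\tau_t(\cdot,v)$ and $e_t(\cdot,v)$ are almost surely continuous functions of the fragment point process, and that the first-entry edge into $v$ is genuinely encoded by the fragment $W_K$ — which is what forces $K$ to include the neighbours of $v$ and relies on each trajectory visiting every vertex only finitely often. This is precisely what the non-standard topology on $\cW$ and $\cW^*$ of \cref{Sec:bgtrajectories} is designed to support, first and last hitting times of finite sets (and the associated path fragments) being continuous there although not in the naive topology; and the passage to $T=\infty$ additionally requires the uniform-in-$n$ lower bound $r_n(v)\geq\Cap(\{v\})>0$ to prevent escape of mass to $+\infty$ in the time coordinate.
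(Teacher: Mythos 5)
Your argument is correct and is essentially the paper's: the paper also reduces to cylinder events $\{S\subseteq\AB_t^T(\cdot)\}$ for finite $S$ and applies the Portmanteau theorem to the weak convergence $\sI^n\Rightarrow\sI$ of \cref{Prop:intexhaust}, observing that the boundary of such an event is contained in $\{\tau_t(e^-)\in\{t,T\}\text{ for some }e\in S\}\cup(\cW^*\times\R\setminus\cA)$, which is null. Your continuous-mapping formulation with the fragment map $W\mapsto W_K$ and the truncation at $b$ simply spells out the a.s.-continuity that the paper's one-line boundary estimate leaves implicit.
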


\begin{proof} 
Let $E^\rightarrow$ be the set of oriented edges of $G$, let $S$  be a finite subset of $E^\rightarrow$, and consider the set
\[C_t^T(S):=\left\{A \subseteq \cW^*\times\R : S \subseteq \AB_t^T(A)\right\}.\]
Let $\partial C_t^T(S)$ denote the topological boundary of $C_t^T(S)$. Observe that
\[\partial C_t^T(S) \subseteq \left\{ A \subseteq \cA : \tau_t(e^-) \in\{t,T\} \text{ for some } e\in S\right\} \cup (\cW^*\times \R \setminus \cA).\]
It follows that $\P(\sI \in \partial C_t^T(S))=0$. The Portmanteau Theorem \cite[Theorem 13.16]{Klenkebook} therefore implies that 
\begin{equation*} \P\left(S \subseteq \AB_t^T(\sI)\right)=\P\left(\sI \in C_t^T(S)\right) = \lim_{n\to\infty}\P\left(\sI^n \in C_t^T(S)\right)=\lim_{n\to\infty}\P\left(S \subseteq \AB_t^T(\sI^n)\right)  \end{equation*}
for every finite set $S\subseteq E^\rightarrow$.
\end{proof}

We next establish the basic properties of the process $\langle \F_{t}\rangle_{t\in\R} = \langle \AB^\infty_{t}(\sI) \rangle_{t\in \R}$. We first recall that a process $\langle X_t \rangle_{t\in \R}$ is said to be \textbf{ergodic} if $\P(\langle X_t \rangle_{t\in \R} \in \sA)\in \{0,1\}$ whenever $\sA$ is an event that is shift invariant in the sense that $\langle X_t \rangle_{t\in\R} \in \sA$ implies that $\langle X_{t+s} \rangle_{t\in \R}\in\sA$ for every $s\in \R$. The process $\langle X_t \rangle_{t\in \R}$ is said to be \textbf{mixing} if for every two events $\sA$ and $\sB$,
\[\P\left(\langle X_t \rangle_{t\in\R} \in \sA \text{ and } \langle X_{t+s} \rangle_{t\in\R} \in \sB\right)  \xrightarrow[s\to\infty]{}\P\left(\langle X_t \rangle_{t\in\R} \in \sA\right)\P\left(\langle X_{t} \rangle_{t\in\R} \in \sB\right).\] 
Every mixing process is clearly ergodic, but the converse need not hold in general \cite[\S 20.5]{Klenkebook}.

\begin{prop}\label{Prop:mixing} Let $G$ be a transient network and let $\sI$ be the interlacement process on $G$. 
Then $\langle \F_{t} \rangle_{t\in\R} = \langle\mathsf{AB}_{t}^\infty(\sI)\rangle_{t \in \R}$ is an ergodic, mixing, Markov process.
\end{prop}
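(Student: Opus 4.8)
The plan is to obtain all three properties from two structural features of the interlacement process: the independence of its restrictions to disjoint regions of $\cW^*\times\R$, and the invariance of its intensity measure $Q^*\otimes\Lambda$ under translations of the time coordinate.

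For the Markov property I would fix $s\in\R$ and split $\sI$ into its restrictions $\sI^{<s}$ and $\sI^{\geq s}$ to $\cW^*\times(-\infty,s)$ and $\cW^*\times[s,\infty)$, which are independent Poisson processes. The first observation is that $\F_s$, and hence $\langle\F_t\rangle_{t\geq s}$, is measurable with respect to $\sigma(\sI^{\geq s})$, since $\tau_s(v)$ is by definition an infimum over times $\geq s$ and the trajectory attaining it, together with the edge by which it first enters $v$, depends only on $\sI^{\geq s}$. The second, and more delicate, observation is that for each $t<s$ the configuration $\F_t$ is a measurable function of the pair $\bigl(\sI\cap(\cW^*\times[t,s)),\,\F_s\bigr)$: the set $H$ of vertices hit by some trajectory of $\sI$ with time coordinate in $[t,s)$ is read off from $\sI\cap(\cW^*\times[t,s))$; for $v\in H$ the oriented edge $-e_t(v)$ is determined by the trajectory $W_{\tau_t(v)}$, which lies in $\sI\cap(\cW^*\times[t,s))$; and for $v\notin H$ one has $\tau_t(v)=\tau_s(v)$, so $-e_t(v)=-e_s(v)$ is exactly the oriented edge of $\F_s$ emanating from $v$. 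Consequently $\langle\F_t\rangle_{t\leq s}$ is $\sigma(\sI^{<s})\vee\sigma(\F_s)$-measurable, and since $\sigma(\sI^{<s})$ and $\sigma(\sI^{\geq s})$ are independent with $\sigma(\F_s)\subseteq\sigma(\sI^{\geq s})$, the elementary fact that $\mathcal A\perp\mathcal B$ together with $\mathcal C\subseteq\mathcal B$ implies $\mathcal A\perp\mathcal B\mid\mathcal C$ (applied with $\mathcal A=\sigma(\sI^{<s})$, $\mathcal B=\sigma(\sI^{\geq s})$, $\mathcal C=\sigma(\F_s)$) yields that $\langle\F_t\rangle_{t\leq s}$ and $\langle\F_t\rangle_{t\geq s}$ are conditionally independent given $\F_s$.

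For ergodicity and mixing I would first record the equivariance of the construction. Writing $\vartheta_s$ for the time-translation $\{(W_i,t_i)\}_i\mapsto\{(W_i,t_i-s)\}_i$ of configurations on $\cW^*\times\R$, the law of $\sI$ is $\vartheta_s$-invariant because $Q^*\otimes\Lambda$ is, and a direct computation from the definitions gives $\tau_t(\vartheta_s\sI,v)=\tau_{t+s}(\sI,v)-s$ with the same trajectory attaining both sides, so that $\F_t(\vartheta_s\sI)=\F_{t+s}(\sI)$ for all $t,s$. Thus $\Phi\colon\sI\mapsto\langle\F_t\rangle_{t\in\R}$ intertwines the flow $(\vartheta_s)$ with the coordinate-shift flow $S_s$, $(S_s\omega)_t=\omega_{t+s}$, exhibiting $\langle\F_t\rangle_{t\in\R}$ as a measure-preserving factor of $(\sI,(\vartheta_s))$; in particular its law is $S_s$-invariant. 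Since a factor of a mixing system is mixing and mixing implies ergodic, it then remains only to prove that $(\sI,(\vartheta_s))$ is mixing. For this I would use that the $\sigma$-algebra generated by $\sI$ is the increasing union over $N$ of $\sigma\bigl(\sI\cap(\cW^*\times[-N,N])\bigr)$, so that by a standard approximation it suffices to verify the mixing relation for events $\sA,\sB$ measurable with respect to one such sub-$\sigma$-algebra; but for $|s|$ large enough that the windows $[-N,N]$ and $[-N,N]+s$ are disjoint, $\vartheta_s^{-1}\sA$ is determined by $\sI$ restricted to $\cW^*\times([-N,N]+s)$ and is therefore independent of $\sB$, since the restrictions of a Poisson process to disjoint sets are independent, so $\P(\vartheta_s^{-1}\sA\cap\sB)=\P(\sA)\P(\sB)$ exactly for all large $|s|$.

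The step I expect to require the most care is the functional identity $\F_t=\phi_t\bigl(\sI\cap(\cW^*\times[t,s)),\,\F_s\bigr)$ underlying the Markov property: one must decompose $\sI$ along the \emph{time} coordinate rather than the space coordinate, since the trajectories that are relevant at a given vertex may be spatially unbounded, and one must check that nothing is lost in passing from $\F_s$ to the ``boundary data'' it supplies for the earlier times. Everything else — independence across disjoint regions and translation invariance for Poisson processes, and the stability of ergodicity and mixing under factors — is routine.
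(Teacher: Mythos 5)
Your proof is correct. The Markov property is handled exactly as in the paper: your identity ``for $v\notin H$, $\tau_t(v)=\tau_s(v)$, so $-e_t(v)=-e_s(v)$'' is precisely the paper's decomposition $\AB_{t-s}(\sI)=\AB_{t-s}^t(\sI)\cup\{e\in\AB_t(\sI):e^- \text{ not hit by } \sI_{[t-s,t)}\}$, and your conditional-independence lemma ($\mathcal A\perp\mathcal B$, $\mathcal C\subseteq\mathcal B$ $\Rightarrow$ $\sigma(\mathcal A,\mathcal C)\perp\mathcal B\mid\mathcal C$) correctly formalizes the step the paper leaves implicit. For mixing, however, you take a genuinely different route: the paper works directly with cylinder events of the process, replaces $\sA$ by the approximating event $\sA_s'$ that all relevant first-entry edges are already determined by trajectories arriving before time $b_1+s$, and bounds the error by $\sum_{v\in K}\exp(-(b_1+s-a_n)\Cap(v))$; you instead prove that the interlacement point process itself is mixing under the time-translation flow $(\vartheta_s)$ (disjoint time-windows of a Poisson process are independent, plus a standard approximation by window-measurable events) and then invoke equivariance $\F_t(\vartheta_s\sI)=\F_{t+s}(\sI)$ to realize $\langle\F_t\rangle$ as a factor. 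Your argument is softer and more general --- it applies verbatim to any time-equivariant functional of $\sI$ and does not even need the positivity of $\Cap(v)$ beyond well-definedness --- whereas the paper's computation yields a quantitative decorrelation rate, exponential in $s$ with exponent $\min_{v\in K}\Cap(v)$, which is occasionally useful. Both are complete proofs.
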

\begin{proof}
The fact that $\langle \F_{t}\rangle_{t\in\R}$ is a Markov process follows from the following identity, which immediately implies that  $\langle \F_s \rangle_{s\leq t}$ and $\langle \F_s \rangle_{s\geq t}$ are conditionally independent given $\F_t$ for each $t\in \R$: whenever $t\in \R$ and $s\geq 0$, 
\begin{equation}\label{eq:Markov} \AB_{t-s}(\sI) = \AB_{t-s}^t(\sI) \cup \{ e \in \AB_t(\sI) : e^- \text{ is not hit by } \sI_{[t-s,t)}\}.\end{equation} 

We now prove that $\langle \F_{t}\rangle_{t\in\R}$ is mixing. Let $a_1,a_2,\ldots,a_n$ and $b_1,b_2,\ldots,b_m$ be two increasing sequences of real numbers, and let $A_1,A_2,\ldots,A_n$ and $B_1,B_2,\ldots B_m$ be finite subsets of $E^\rightarrow$. Let $K$ denote the set of endpoints of edges in the union $\bigcup_{i=1}^n A_i$. Let $s$ be sufficiently large that $b_1+s \geq a_n$. Let  $\sA = \{A_i \subseteq \AB_{a_i}^\infty(\sI) \text{ for all }1\leq i\leq n \}$, $\sB_s = \{B_i \subseteq \AB_{b_i+s}^\infty(\sI) \text{ for all } 1\leq i \leq m\}$,
and 
$\sA'_s = \{A_i \subseteq \AB_{a_i}^{b_1+s}(\sI) \text{ for all } 1 \leq i\leq n \}\subseteq \sA$. The events $\sA'_s$ and $\sB_s$ are independent and
\[\P(\sA\setminus\sA'_s) \leq \P(\sI_{[a_n,b_1+s]}\text{ does not hit some vertex in } K) \leq \sum_{v\in K}\exp\left(-(b_1+s-a_n)\mathrm{Cap}(v)\right).\]
We deduce that
\begin{align*}
|\P(\sA\cap \sB_s) - \P(\sA)\P(\sB_s)| &\leq |\P(\sA\cap \sB_s) - \P(\sA'_s\cap \sB_s)| + |\P(\sA'_s\cap \sB_s) - \P(\sA)\P(\sB_s)|\\
&= |\P(\sA\cap \sB_s) - \P(\sA'_s\cap \sB_s)| + |\P(\sA'_s)\P(\sB_s) - \P(\sA)\P(\sB_s)|\\
&\leq 2\sum_{v\in K}\exp\left(-(b_1+s-a_n)\mathrm{Cap}(v)\right) \xrightarrow[s\to\infty]{}0. 
\end{align*} 
Since events of the form $\{A_i \subseteq \AB_{a_i}^\infty(\sI) \text{ for all }1\leq i\leq n \}$ generate the Borel $\sigma$-algebra on the space of $(E^\rightarrow)^{\{0,1\}}$-valued processes, it follows that $\langle \F_{t} \rangle_{t\in \R}$ is mixing and therefore ergodic.
\end{proof}

\section{Proof of \cref{T:anchor,Thm:excessnotrandom,thm:unimod}} \label{Sec:ends}
Recall that an \textbf{end} of a tree is an equivalence class of infinite simple paths in the tree, where two infinite simple paths are equivalent if their traces have finite symmetric difference. Similarly, if $\F$ is a spanning forest of a network $G$, we define an end of $\F$ to be an equivalence class of infinite simple paths in $G$ that eventually only use edges of $\F$, where, again, two infinite simple paths are equivalent if their traces have finite symmetric difference. If an infinite tree $T$ is oriented so that every vertex of the tree has exactly one oriented edge emanating from it, then there is exactly one end $\xi$ of $T$ for which the paths representing $\xi$ eventually follow the orientation of $T$. We call this end the \textbf{primary end} of $T$. We call ends of $T$ that are not the primary end \textbf{excessive ends} of $T$. Note that if $\xi$ is an excessive end of $T$ and $\gamma$ is a simple path representing $\xi$, then all but finitely many of the edges traversed by the path $\gamma$ are traversed in the opposite direction to their orientation in $T$.

If $\sI$ is the interlacement process on a transient network $G$, we write $\cI_{[a,b]}$ for the set of vertices of $G$ hit by $\sI_{[a,b]}$.

The proofs of \cref{T:anchor,thm:unimod} both rely on the following criterion.
See \cref{fig:criterion} for an illustration of the proof. 

\begin{lem}\label{L:criterion}
Let $G$ be a transient network, let $\sI$ be the interlacement process on $G$, and let $\langle \F_{t} \rangle_{t\in \R}=\langle \AB_{t}(\sI)\rangle_{t\in \R}$. If the connected component  containing $v$ of 
$\past_{\F_0}(v) \setminus \cI_{[-\eps,0]}$ 
is finite a.s.\ for every vertex $v$ of $G$ and every $\eps>0$, then every component of $\F_0$ is one-ended a.s.
\end{lem}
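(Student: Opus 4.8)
The plan is to reduce the lemma to the statement that $\past_{\F_0}(v)$ is finite almost surely for each fixed vertex $v$. This reduction is formal: $\F_0$ is almost surely an essential spanning forest oriented so that every vertex has exactly one outgoing edge, and a component $T$ has an excessive end if and only if some vertex $x$ of $T$ has $\past_{\F_0}(x)$ infinite — on the one hand, a simple path representing an excessive end is eventually reverse-oriented, forcing an infinite past for vertices far out along it; on the other hand, an infinite past is an infinite locally finite tree, which contains an infinite reverse-oriented ray by König's lemma, giving an excessive end. Since $V$ is countable, it suffices to show $\P(\past_{\F_0}(v)\text{ is infinite})=0$ for each $v$.

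Assume for contradiction that $\P(\past_{\F_0}(v)\text{ is infinite})>0$ for some $v$; we will show the hypothesis of the lemma fails. The basic structural fact is that $\F_0=\AB_0(\sI)$ is measurable with respect to $\sI_{[0,\infty)}$ and hence independent of $\sI_{[-\eps,0]}$ (note $\sI_{\{0\}}=\emptyset$ a.s.). Therefore, for any $\F_0$-measurable random set $A\subseteq V$, the hitting formula of \cref{Sec:bghitting} gives $\P(A\cap\cI_{[-\eps,0]}=\emptyset\mid\sI_{[0,\infty)})=\exp(-\eps\Cap(A))$. Thus, if on a positive-probability event there is a reverse-oriented ray $\gamma$ in $\F_0$ from $v$ with $\Cap(\gamma)<\infty$, then letting $\eps\downarrow0$ and using monotone convergence we find $\eps>0$ for which $\{\cI_{[-\eps,0]}\cap\gamma=\emptyset\}$ has positive probability on that event; since $\F_0$ and $\F_{-\eps}$ agree on $V\setminus\cI_{[-\eps,0)}$ by the Markov relation \eqref{eq:Markov}, the ray $\gamma$ is then an infinite connected subset of $\past_{\F_0}(v)\setminus\cI_{[-\eps,0]}$ containing $v$, contradicting the hypothesis. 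So it remains to produce such a finite-capacity ray.

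The key observation is a monotonicity along reverse-oriented rays. If $\gamma=(v=\gamma_0,\gamma_1,\gamma_2,\dots)$ is a reverse-oriented ray in $\F_0$, then for each $i$ the oriented edge of $\F_0$ out of $\gamma_{i+1}$ is $(\gamma_{i+1},\gamma_i)$, which means the trajectory $W_{\tau_0(\gamma_{i+1})}$ enters $\gamma_{i+1}$ for the first time along the edge from $\gamma_i$; in particular that trajectory visits $\gamma_i$ one step earlier, so $\tau_0(\gamma_i)\le\tau_0(\gamma_{i+1})$. Hence $i\mapsto\tau_0(\gamma_i)$ is non-decreasing, so the first time at or after $0$ that any trajectory of $\sI$ hits $\gamma$ equals $\tau_0(\gamma_0)=\tau_0(v)$, which is almost surely strictly positive (trajectories through the single vertex $v$ form a Poisson process of rate $\Cap(v)<\infty$). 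On the other hand, if $\Cap(\gamma)=\infty$ then, by \cref{L:zeroonehit,L:recurrentcap}, $\gamma$ is hit by trajectories of $\sI$ at times arbitrarily close to $0$ from above, so that first hitting time would be $0$ — a contradiction. Therefore $\Cap(\gamma)<\infty$ for every reverse-oriented ray $\gamma$ in $\F_0$, and combined with the previous paragraph this yields the desired contradiction.

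The main obstacle is the last step of the third paragraph — deducing that $\Cap(\gamma)=\infty$ is impossible — since $\gamma$ is a random ray correlated with the trajectories of $\sI$ at positive times, whereas the clean ``first hitting time is $0$'' statement for infinite-capacity sets is for fixed sets tested against independent randomness. I expect to handle this by exploiting stationarity of $\langle\F_t\rangle_{t\in\R}$ (\cref{Prop:mixing}) to shift the relevant time window, together with a conditioning argument that extracts an $\sI_{[\delta,\infty)}$-measurable infinite sub-ray of $\gamma$ and tests it against the independent process $\sI_{[0,\delta)}$. Should this route fail, the ``fat past'' case can be treated directly: when $\Cap(\past_{\F_0}(v))=\infty$ one shows, via a Poisson-thinning estimate (the number of trajectories of $\sI_{[-\eps,0]}$ meeting a fixed piece of the past of capacity $\kappa$ is Poisson of mean $\eps\kappa$), that for small $\eps$ the vertex $v$ still lies in an infinite component of $\past_{\F_0}(v)\setminus\cI_{[-\eps,0]}$ with positive probability, once again contradicting the hypothesis.
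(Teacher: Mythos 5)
Your reduction to ``$\past_{\F_0}(v)$ is finite a.s.'' and your use of the independence of $\sI_{[-\eps,0]}$ from $\sI_{[0,\infty)}$ in the second paragraph are fine, but the argument breaks at exactly the point you flag, and the proposed repairs do not close it. \cref{L:zeroonehit} and \cref{L:recurrentcap} concern a \emph{fixed} set $K$ tested against the Poisson process; your ray $\gamma$ is a reverse-oriented ray of $\F_0=\AB_0(\sI)$ and is therefore measurable with respect to $\sI_{[0,\infty)}$, the very randomness you are testing it against. So ``$\Cap(\gamma)=\infty$ implies $\gamma$ is hit at times arbitrarily close to $0$ from above'' is not available, and indeed your own monotonicity computation shows that $\gamma$ is precisely a set which the positive-time process does not hit before $\tau_0(v)>0$ --- no contradiction can be extracted from that, and the intermediate claim ``every reverse-oriented ray of $\F_0$ has finite capacity'' is not something this argument can deliver. (Compare the proof of \cref{Thm:excessnotrandom}: infinite-capacity, i.e.\ destructible, ends are ruled out there only under a countability assumption, and crucially by testing the ray against the \emph{independent} negative-time process $\sI_{[-\eps,0]}$, never against positive times.) The fallbacks are speculative: an ``$\sI_{[\delta,\infty)}$-measurable sub-ray'' does not exist in any useful sense because $\F_0$ genuinely depends on $\sI_{[0,\delta)}$; and the ``fat past'' assertion --- that $\Cap(\past_{\F_0}(v))=\infty$ forces $v$ to lie in an infinite component of $\past_{\F_0}(v)\setminus\cI_{[-\eps,0]}$ with positive probability --- is exactly the contrapositive of the lemma in the hard case, stated without proof. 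A Poisson bound on the \emph{number} of trajectories meeting a piece of the past does not give it: a single trajectory can disconnect $v$ from infinity inside its past, and when every backward ray has infinite capacity it is entirely consistent (conditionally on $\F_0$, each fixed ray is then hit a.s.) that $\cI_{[-\eps,0]}$ shatters the past into finite pieces. So the crux of the lemma is left unproved.

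The paper's proof avoids all of this and needs neither a contradiction nor any capacity estimate on rays; it uses stationarity of $\langle \F_t\rangle_{t\in\R}$ together with the monotonicity you already observed, but applied at time $-\eps$ rather than at time $0$. If $u\in\past_{\F_{-\eps}}(v)$ then $\tau_{-\eps}(u)\geq\tau_{-\eps}(v)$, so on the event $v\notin\cI_{[-\eps,0]}$ the past of $v$ in $\F_{-\eps}$ is exactly the component of $v$ in $\past_{\F_0}(v)\setminus\cI_{[-\eps,0]}$, which is finite by hypothesis; hence $\P\left(\past_{\F_0}(v)\text{ infinite}\right)=\P\left(\past_{\F_{-\eps}}(v)\text{ infinite}\right)\leq\P\left(v\in\cI_{[-\eps,0]}\right)=1-e^{-\eps\Cap(v)}\to0$ as $\eps\to0$. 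You have every ingredient of this argument in your write-up (the monotonicity of arrival times along the past and the Markov relation \eqref{eq:Markov}); redirecting them this way turns the hypothesis into a direct bound and removes the need to produce finite-capacity rays, which is where your proposal stalls.
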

\begin{proof}
If $u$ is in the past of $v$ in $\F_{-\eps}$, then $\tau_{-\eps}(u)\geq \tau_{-\eps}(v)$. Thus, on the event that $v$ is not hit by $\sI_{[-\eps,0]}$, the past of $v$ in $\F_{-\eps}$ is equal to the component containing $v$ in the subgraph of $\past_{\F_0}(v)$ induced by the complement of $\cI_{[-\eps,0]}$ (see \cref{fig:criterion}). By assumption, the connected component containing $v$ in this subgraph is finite a.s., and so, by stationarity,  
\begin{multline*}\P(\past_{\F_0}(v) \text{ is infinite})=\P(\past_{\F_{-\eps}}(v) \text{ is infinite}) \leq \P(v\in \cI_{[-\eps,0]})=1-e^{-\eps\Cap(v)}\xrightarrow[\eps\to0]{}0.\end{multline*}
Since $v$ was arbitrary, we deduce that every component of $\F_0$ is one-ended a.s.
\end{proof}

\begin{figure}
\includegraphics[width=0.9\textwidth]{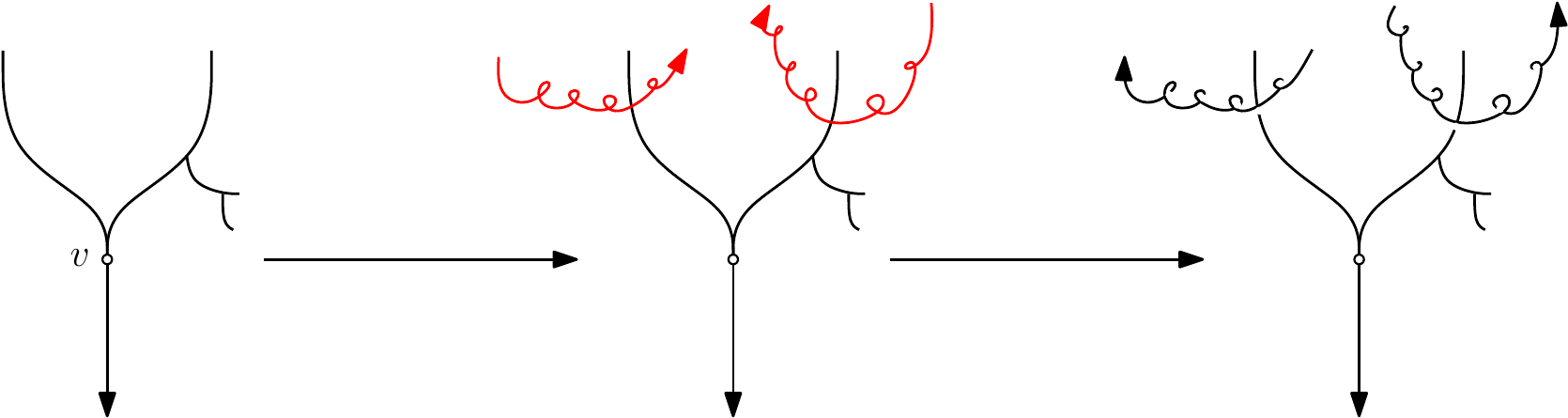}
\caption{A schematic illustration of the proof of \cref{L:criterion}. Left: a tree in $\F_0$ with two excessive ends in the past of the vertex $v$. Centre: each of the excessive ends is hit by a trajectory of $\sI_{[-\eps,0]}$ (red), but $v$ is not hit by such a trajectory. Right: the resulting forest $\F_{-\eps}$. }
\label{fig:criterion}
\end{figure}

The proof of \cref{T:anchor} requires both of the following theorems.
\begin{thm}[Lyons, Morris and Schramm \cite{LMS08}; Lyons and Peres {\cite[Theorem 6.41]{LP:book}}]\label{Thm:LPbound} Let $G$ be a network satisfying an anchored $f(t)$-isoperimetric inequality, where $f$ is an increasing function such that $f(t)\leq t$, $f(2t)\leq \alpha f(t)$ for some constant $\alpha$, and $\int_{1}^\infty f(t)^{-2}\mathrm{d}t<\infty$.
 Then for every vertex $v$ of $G$ there exists a positive constant $c_v$ such that, for every connected set $K$ containing $v$,
 \[\Cap(K) \geq \frac{c_v^2}{4\alpha^2}\left(\int_{|K|}^\infty\!\frac{1}{f(t)^2}\right)^{-1}.\]
 In particular, $G$ is transient. 
\end{thm}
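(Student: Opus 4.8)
The plan is to establish the capacity bound by the method of random paths: for a fixed connected set $K$ containing $v$, it suffices to produce a random infinite simple path $\Gamma$ starting in $K$ for which
\[\sum_{e\in E}\P(e\in\Gamma)^2 \;\le\; \frac{4\alpha^2}{c_v^2}\int_{|K|}^\infty \frac{\mathrm{d}t}{f(t)^2},\]
where $c_v>0$ is the constant witnessing the anchored $f(t)$-isoperimetric inequality at $v$, which we may and do take to be at most $1$. Transience of $G$ is then an immediate consequence of the case $K=\{v\}$, for which the right-hand side is finite by the hypothesis $\int_1^\infty f(t)^{-2}\,\mathrm{d}t<\infty$ together with the positivity of $f$ on $(0,\infty)$.

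I would build $\Gamma$ scale by scale. Set $m=|K|$ and $n_j=2^j m$. The aim is to construct a nested sequence of random connected sets $K=A_0\subseteq A_1\subseteq A_2\subseteq\cdots$, each containing $v$ and each of volume $|A_j|$ comparable to $n_j$ — both bounds matter here: a lower bound $|A_j|\gtrsim n_j$ so that the isoperimetric inequality forces $\partial_E A_j$ to be large, and an upper bound $|A_j|\lesssim n_j$ so that the annulus $A_j\setminus A_{j-1}$ has small volume — together with the path $\Gamma$, which should traverse $A_0,\,A_1\setminus A_0,\,A_2\setminus A_1,\dots$ in order and cross each boundary $\partial_E A_{j-1}$ exactly once. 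Since $A_{j-1}$ is connected and contains $v$, the anchored isoperimetric inequality gives $|\partial_E A_{j-1}|\ge c_v f(|A_{j-1}|)\gtrsim c_v f(n_j)/\alpha$, using $|A_{j-1}|\gtrsim n_{j-1}=n_j/2$ and $f(2t)\le\alpha f(t)$. The decisive feature of the construction is that $\Gamma$ must \emph{spread out} over this boundary: the crossing edge of $\partial_E A_{j-1}$ should be sampled (near-)uniformly from $\partial_E A_{j-1}$, and the continuations of $\Gamma$ issuing from distinct crossing edges should be routed through $A_j\setminus A_{j-1}$ so as to be pairwise edge-disjoint. Granting this, $\Gamma$ uses each edge of $A_j\setminus A_{j-1}$ with probability of order at most $1/|\partial_E A_{j-1}|$; as there are of order at most $n_j$ such edges, the contribution of scale $j$ to $\sum_e\P(e\in\Gamma)^2$ is of order at most $n_j/|\partial_E A_{j-1}|^2\lesssim \alpha^2 n_j/\bigl(c_v^2 f(n_j)^2\bigr)$.

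Summation then completes the estimate. Since $f$ is increasing and $n_j-n_{j-1}=n_j/2$ we have $n_j/f(n_j)^2\le 2\int_{n_{j-1}}^{n_j}f(t)^{-2}\,\mathrm{d}t$, so the contributions from scales $j\ge1$ telescope to at most $2\int_m^\infty f(t)^{-2}\,\mathrm{d}t$, up to the factor $\alpha^2/c_v^2$; the scale-$0$ contribution, coming from starting $\Gamma$ at a (near-)uniform edge of $\partial_E K$, is of order $1/|\partial_E K|\le 1/\bigl(c_v f(m)\bigr)$, and the elementary bound $\int_m^\infty f(t)^{-2}\,\mathrm{d}t\ge\int_m^{2m}f(2m)^{-2}\,\mathrm{d}t\ge m/\bigl(\alpha f(m)\bigr)^2\ge 1/\bigl(\alpha^2 f(m)\bigr)$, valid because $f(m)\le m$, absorbs it into the same bound. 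Tracking the constants carefully and optimising the doubling ratio yields the claimed factor $4\alpha^2/c_v^2$, and the method of random paths then gives $\Cap(K)\ge\bigl(\sum_e\P(e\in\Gamma)^2\bigr)^{-1}\ge\frac{c_v^2}{4\alpha^2}\bigl(\int_{|K|}^\infty f(t)^{-2}\,\mathrm{d}t\bigr)^{-1}$.

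The principal obstacle is the spreading step of the second paragraph: producing the edge-disjoint family of continuations across each annulus while simultaneously keeping $|A_j|\asymp n_j$ and keeping $\Gamma$ simple. I would extract the disjoint family from Menger's theorem, which reduces the task to showing that every edge-cut of $A_j\setminus A_{j-1}$ separating $\partial_E A_{j-1}$ from $\partial_E A_j$ has conductance at least $|\partial_E A_{j-1}|$ — and this is exactly what the anchored isoperimetric inequality delivers when applied to the connected, $v$-containing set of volume at most $|A_j|$ lying on the inner side of the cut. Reconciling this size control with the disjointness requirement is the genuinely delicate point, and is essentially the evolving-set / growth construction of Lyons, Morris and Schramm underlying the statement. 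Once it is in place, $\Gamma$ is automatically an infinite simple path because $|A_j|\to\infty$, so the proof is complete.
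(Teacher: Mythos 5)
First, a point of reference: the paper does not prove this theorem. It is imported as a black box from \cite{LMS08} and \cite{LP:book}, so there is no internal proof to measure your attempt against; I am judging the sketch on its own. Your overall architecture --- a random path (equivalently, a unit flow) from $K$ to infinity spread across volume-doubling connected sets $A_j$, with the scale-$j$ energy bounded by (volume of the $j$-th annulus)$/M_j^2$ for a cut bound $M_j$, and the sum controlled by $n_j/f(n_j)^2\le 2\int_{n_{j-1}}^{n_j}f(t)^{-2}\,\mathrm{d}t$ --- is indeed the standard route to Thomassen-type criteria and their quantitative refinements, and the bookkeeping in your third paragraph is fine.

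The decisive step, however, is misstated in a way that matters. You claim that every edge-cut of the annulus separating $\partial_E A_{j-1}$ from $\partial_E A_j$ has conductance at least $|\partial_E A_{j-1}|$. This is false: if all of $\partial_E A_{j-1}$ funnels into a single vertex $u$ just outside $A_{j-1}$, the edges leaving $u$ on the far side form such a cut, and the isoperimetric inequality only forces their conductance to be at least $c_v f(|A_{j-1}\cup\{u\}|)$, which can be far smaller than $|\partial_E A_{j-1}|$. What the anchored inequality actually gives --- via the observation that the inner side $B$ of any such cut $\Pi$ is connected, contains $v$, contains $A_{j-1}$, avoids the inner vertex boundary of $A_j$, and hence satisfies $\partial_E B\subseteq\Pi$ --- is $|\Pi|\ge c_v f(|A_{j-1}|)$. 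That weaker bound is exactly what you substitute into the energy estimate anyway, so the arithmetic survives, but it destroys the plan of one edge-disjoint continuation per edge of $\partial_E A_{j-1}$ with the crossing edge uniform on $\partial_E A_{j-1}$: the path must instead be spread over the max flow of the annulus, carrying probability at most $c(e)/M_j$ per edge with $M_j\ge c_v f(|A_{j-1}|)$. Compounding this, you never say how the exit distribution of $\Gamma$ on $\partial_E A_j$ produced at scale $j$ is matched to the entry distribution needed at scale $j+1$; gluing the per-annulus max flows into one consistent unit flow (by prescribing conductance-proportional boundary values, or by compactness over finite truncations) is the genuinely hard part, and you explicitly defer it to the very construction of Lyons, Morris and Schramm that is to be proved. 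As it stands, the proposal is a correct outline whose key cut estimate is wrong as stated and whose gluing step is missing.
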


\begin{thm}[Morris {\cite[Theorem 9]{Morris03}}: WUSF components are recurrent]\label{T:recurrent}
Let $G$ be an infinite network with edge conductances bounded above. Then every component of the wired uniform spanning forest of $G$ is recurrent  a.s. 
\end{thm}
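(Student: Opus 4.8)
The plan is to reduce to a statement about the component of a single vertex and then use Wilson's algorithm. Two preliminary reductions: if $G$ is recurrent then the $\WUSF$ is the uniform spanning tree of $G$, which is a spanning subgraph, and deleting edges only increases effective resistances, so it is recurrent; thus we may assume $G$ transient. Rescaling every conductance by $1/\sup_e c(e)$ affects neither $\WUSF_G$ nor recurrence of subgraphs, so we may assume $c(e)\le 1$ for every edge $e$. Since $V$ is countable, it then suffices to fix $\rho$ and show that the component $T$ of $\rho$ in the $\WUSF$, with the conductances inherited from $G$, satisfies $\mathcal R_{\mathrm{eff}}^T(\rho\leftrightarrow\infty)=\infty$ a.s.

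Next I would expose the structure of $T$ by running Wilson's algorithm rooted at infinity with the first loop-erased walk started at $\rho$: the resulting path $\Gamma=\langle\Gamma_0=\rho,\Gamma_1,\dots\rangle$ is the oriented ray from $\rho$ to infinity in $T$ (a.s.\ infinite), and $T$ is $\Gamma$ together with ``bushes'' hanging off the $\Gamma_k$. As $T$ is a tree, the series and parallel laws give the recursion
\[ C_k \;=\; D_k + \bigl(c(\Gamma_k,\Gamma_{k+1})^{-1}+C_{k+1}^{-1}\bigr)^{-1},\qquad C_0=\mathcal C_{\mathrm{eff}}^T(\rho\leftrightarrow\infty), \]
where $C_k$ is the effective conductance from $\Gamma_k$ to infinity inside the part of $T$ beyond $\Gamma_k$ and $D_k$ is the total effective conductance from $\Gamma_k$ to infinity through the infinite bushes at $\Gamma_k$ (recurrent bushes, in particular finite ones, contribute $0$). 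Since each $c(\Gamma_j,\Gamma_{j+1})^{-1}\ge1$, iterating gives $C_0^{-1}\ge\sum_{j<k}c(\Gamma_j,\Gamma_{j+1})^{-1}\ge k$ whenever $D_0=\dots=D_{k-1}=0$, so if all $D_k=0$ then $C_0=0$, while if some $D_k>0$ there is a finite-resistance route from $\rho$ to infinity through that bush and $C_0>0$. Hence $T$ is recurrent if and only if every infinite bush hanging off $\Gamma$ is recurrent (in particular, one-ended components are recurrent, as then all bushes are finite).

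It remains to prove that a.s.\ every infinite bush off $\Gamma$ is recurrent --- and this is the crux. By the spatial Markov property of the $\WUSF$, conditionally on $\Gamma$ each infinite bush, re-rooted at any of its vertices, is a component of the $\WUSF$ of a transient network with conductances still $\le1$ (formally of $G$ with $\Gamma$ collapsed to a wired boundary vertex, which one handles via a standard exhaustion argument over finite initial segments of $\Gamma$). The statement is thus self-referential, and I would close it by a bootstrap: let $p^{\ast}:=\sup\bigl\{\P(\text{the component of }v\text{ in }\WUSF_G\text{ is transient}) : G\text{ transient},\ \sup_e c(e)\le1,\ v\in V(G)\bigr\}$ and show $p^{\ast}=0$. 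The obstacle is that the ray may meet \emph{infinitely many} infinite bushes, so the naive union bound over bushes is far too weak to improve on $p^{\ast}$. Overcoming it requires exploiting that current escaping through the bush at $\Gamma_k$ must first cross $\Gamma_0,\dots,\Gamma_k$ and hence pick up resistance $\ge k$ (using $c\le1$), so distant bushes contribute little conductance; combining this with a step-by-step revelation of the bushes along $\Gamma$ via the spatial Markov property and a supermartingale-type estimate should give $p^{\ast}=0$. This final step is the technical heart of Morris's argument, whose bookkeeping I would follow rather than redo here.
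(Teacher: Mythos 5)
This statement is not proved in the paper at all: it is imported verbatim from Morris \cite[Theorem 9]{Morris03}, and the only argument the paper supplies in this vicinity is the short reduction in \cref{T:recurrentedit}, which deduces the bounded-below-conductance variant from Morris's theorem by splitting each edge into parallel edges and invoking Rayleigh monotonicity. So your proposal should be judged as an attempted proof of Morris's theorem itself, and as such it has a genuine gap: after the (correct and standard) reductions --- transient case, conductances $\le 1$, Wilson's algorithm rooted at infinity, the series/parallel recursion showing that the component of $\rho$ is transient if and only if some infinite bush hanging off the ray $\Gamma$ is transient --- you arrive at exactly the hard content of the theorem, namely that a.s.\ no bush is transient, and you explicitly defer it (``whose bookkeeping I would follow rather than redo here''). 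The proposed bootstrap via $p^{\ast}$ is not an argument: nothing in the sketch produces a strict improvement of $p^{\ast}$, and the heuristic that distant bushes ``pick up resistance $\ge k$'' does not by itself rule out a single transient bush attached near $\rho$, which is the event you need to kill. Morris's actual proof does not proceed by this self-referential recursion at all; it is a direct second-moment/electrical argument, so the step you are leaning on cannot simply be quoted to fill your gap in the form you set it up.

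There is also a soft spot in the conditioning step. The ``spatial Markov property'' of the WUSF concerns conditioning on prescribed edges being present or absent; conditioning on the event that $\Gamma$ is the entire future of $\rho$ (the first Wilson branch) is a different and stronger conditioning, and the conditional law of the bushes is obtained by \emph{continuing} Wilson's algorithm in $G$ with $\Gamma$ as the current forest --- it is not literally ``a component of the WUSF of $G$ with $\Gamma$ wired to a boundary vertex'' without further justification. This matters because your bootstrap quantifies over all transient networks with $c\le1$ and all vertices, and the class of conditional laws you actually encounter has not been shown to lie inside it. In short: the reductions are fine, but the theorem's essential difficulty is untouched, and in the paper it is resolved only by citation to \cite{Morris03}.
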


An equivalent statement of \cref{T:recurrent} is the following.
\begin{lem}\label{T:recurrentedit}
Let $G$ be an infinite network with $\inf_e c(e) >0$. Then every component of the wired uniform spanning forest of $G$ is recurrent when given unit conductances a.s. 
\end{lem}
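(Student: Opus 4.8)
The plan is to deduce the lemma from \cref{T:recurrent} by passing to an auxiliary network with uniformly bounded conductances. Write $c_0=\inf_e c(e)>0$. First I would build a network $\tilde G$ on the same vertex set by \emph{splitting} each edge into parallel copies: for each edge $e$ of $G$ let $m(e)=\lfloor c(e)/c_0\rfloor$, which is at least $1$ since $c(e)\geq c_0$, and replace $e$ by $m(e)$ parallel edges, each carrying conductance $c(e)/m(e)$. Since $\lfloor x\rfloor\in[x/2,x]$ for $x\geq1$, every edge of $\tilde G$ has conductance in $[c_0,2c_0]$, and each vertex $v$ has degree $\sum_{e\ni v}m(e)\leq c(v)/c_0<\infty$; thus $\tilde G$ is an infinite, connected, locally finite network whose conductances are bounded above, so \cref{T:recurrent} applies to it.

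The second step is to show that the wired uniform spanning forest of $\tilde G$ projects onto that of $G$ under the map $\Pi$ that sends each parallel copy of $e$ back to $e$. On a finite network, a spanning tree of the split network projects to a spanning tree $T$ of the original precisely when it uses exactly one copy of each edge of $T$ and no copy of any other edge; summing the $\UST$-weights of all such trees gives $\prod_{e\in T}\bigl(\sum_{i=1}^{m(e)}c(e)/m(e)\bigr)=\prod_{e\in T}c(e)$, so $\Pi$ pushes $\UST_{\widetilde{G_n^*}}$ forward to $\UST_{G_n^*}$ for every $n$, where I use that splitting commutes with wiring, i.e.\ $\widetilde{G_n^*}=(\tilde G)_n^*$. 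Letting $n\to\infty$ and using continuity of $\Pi$ together with uniqueness of weak limits gives $\Pi_*\WUSF_{\tilde G}=\WUSF_G$. Moreover, a spanning forest, being cycle-free, contains at most one copy of each edge, so under the coupling $\F=\Pi(\tilde\F)$ the map $\Pi$ restricts to a graph isomorphism — the identity on vertices and a bijection on edges — between each component of $\tilde\F$ and the corresponding component of $\F$.

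Finally I would transfer recurrence along this isomorphism. By \cref{T:recurrent}, almost surely every component $\tilde H$ of $\tilde\F$ is recurrent with respect to the conductances inherited from $\tilde G$; since those conductances are everywhere at least $c_0$, Rayleigh monotonicity shows that $\tilde H$ remains recurrent after every conductance is decreased to the constant $c_0$, and the random walk for constant conductance $c_0$ is just the simple random walk, so $\tilde H$ is recurrent with unit conductances. Pushing this through the isomorphism $\Pi$ shows that almost surely every component of $\F\sim\WUSF_G$ is recurrent with unit conductances, as required; the reverse implication making the two statements genuinely equivalent follows by an analogous construction that subdivides edges instead of splitting them.

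I expect the only real subtlety to be the bookkeeping in the second step — verifying that edge-splitting commutes with the wired-exhaustion construction and that the finite-network count identifies $\Pi_*\UST_{\widetilde{G_n^*}}$ with $\UST_{G_n^*}$ exactly — together with keeping the direction of Rayleigh monotonicity straight at the end, which is exactly why one splits into $\lfloor c(e)/c_0\rfloor$ rather than $\lceil c(e)/c_0\rceil$ copies, so that the auxiliary conductances stay bounded below by $c_0$.
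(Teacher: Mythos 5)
Your proposal is correct and is essentially the paper's argument: the paper likewise replaces each edge $e$ by parallel copies (it uses $\lceil c(e)\rceil$ copies of conductance $c(e)/\lceil c(e)\rceil$, which are bounded above and, since $\inf_e c(e)>0$, also below), couples the two WUSFs via the projection, applies \cref{T:recurrent} to the auxiliary network, and finishes with Rayleigh monotonicity. Your choice of $\lfloor c(e)/c_0\rfloor$ copies is an immaterial variation, and the extra bookkeeping you supply for the pushforward of the UST measures is exactly what the paper leaves as "immediate from the definition."
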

\begin{proof}
Form a network $\hat G$ by replacing each edge $e$ of $G$  with $\lceil c(e) \rceil$ parallel edges each with conductance $c(e)/\lceil c(e) \rceil$. It follows immediately from the definition of the UST of a network that the WUSF $\hat \F$ of $\hat G$ may be coupled with the WUSF $\F$ of $G$ so that an edge $e$ of $G$ is contained in $\F$ if and only if one of the edges corresponding to $e$ in $\hat G$ is contained in $\hat\F$. Since $\hat G$ has edge conductances bounded above, \cref{T:recurrent} implies that every component of $\hat \F$ is recurrent a.s. Since the edge conductances of $\hat G$ are bounded away from zero, it follows by Rayleigh monotonicity that every component of $\hat \F$ is recurrent a.s.\ when given unit conductances, and consequently that the same is true of $\F$.  \qedhere
\end{proof}
\begin{proof}[Proof of \cref{T:anchor}]

By \cref{Thm:LPbound}, $G$ is transient. Let $\sI$ be the interlacement process on $G$. 
Let $v$ be a fixed vertex of $G$. Then for each vertex $u$ of $G$ contained in the same component of $\F=\AB_0(\sI)$ as $v$, the conditional probability given $\F$ that $u$ is connected to $v$ in $\F \setminus \cI_{[-\eps,0]}$ is equal to 
$\exp({-\eps\Cap(\gamma_{u,v})})$, 
where $\gamma_{u,v}$ is the trace of the path connecting $u$ to $v$ in $\F$. Write $d_\F(u,v)$ for the graph distance in $\F$ between two vertices $u$ and $v$ in $G$.  
Since the conductances of $G$ are bounded below by some positive constant $\delta$, we have that $|\gamma_{u,v}|\geq \delta d_\F(u,v)$ and so, by \cref{Thm:LPbound},
\begin{multline}\label{eq:percestimate}\P\left(u \text{ connected to } v \text{ in } \F \setminus \cI_{[-\eps,0]} \mid \F\right)\\ \leq \mathbbm{1}(u \text{ connected to } v \text{ in } \F)\exp\left(-\eps\frac{c_v^2}{4\alpha^2}\left(\int_{{\delta}d_\F(u,v)}^\infty\frac{1}{f(t)^2}\right)^{-1}\right).\end{multline}

\begin{lem}\label{L:perc}
Let $T$ be an infinite tree, let $v$ be a vertex of $T$ and let $\omega$ be a random subgraph of $T$. For each vertex $u$ of $G$, let $\Vert u \Vert$ denote the distance between $u$ and $v$ in $T$, and suppose that there exists a function $p:\N\to[0,1]$ such that \[\P(u \text{ is connected to $v$ in $\omega$}) \leq p\left(\Vert u \Vert\right)\] 
for every vertex $u$ in $T$ and
\[\sum_{n\geq1} p(n) < \infty.\]
Then 
\[\P(\text{The component of $v$ in $\omega$ is infinite}) \leq \Cap(v) \sum_{n\geq1}p(n).\]
In particular, if $T$ is recurrent then the component containing $v$ in $\omega$ is finite a.s. 
\end{lem}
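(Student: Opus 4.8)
The plan is to prove the quantitative bound $\P(A)\le \Cap(v)\sum_{n\ge1}p(n)$, where $A$ denotes the event that the component $C$ of $v$ in $\omega$ is infinite; the final sentence of the lemma is then immediate, since a recurrent tree has $\Cap(v)=0$. We may assume $\P(A)>0$, as otherwise there is nothing to prove. The key device is to extract from the random subgraph $\omega$ a random infinite simple path to which the method of random paths applies. Fix, once and for all, a linear ordering of the children of each vertex of $T$ relative to the root $v$. On the event $A$, the component $C$ is an infinite, locally finite subtree of $T$ containing $v$, so by K\"onig's lemma it contains a ray emanating from $v$; let $\Gamma$ be the lexicographically least such ray. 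Then $\Gamma$ is a measurable function of $\omega$, and under the conditional law $\P(\,\cdot\mid A)$ it is a random infinite simple path in $T$ started at $v$, every edge of which lies in $\omega$.

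The essential feature of working in a tree is that a simple path from $v$ moves steadily away from $v$: the $n$-th vertex of $\Gamma$ is precisely its unique vertex at distance $n$ from $v$. Hence, writing $\Pi_n$ for the set of edges $e$ of $T$ with $\Vert e^-\Vert=n-1$ and $\Vert e^+\Vert=n$ (so that $\{\Pi_n\}_{n\ge1}$ partitions the edge set of $T$), the path $\Gamma$ uses exactly one edge of $\Pi_n$ for each $n$, and therefore $\sum_{e\in\Pi_n}\P(e\in\Gamma\mid A)=1$. If $\Gamma$ traverses an edge $e\in\Pi_n$ on the event $A$ then the far endpoint $e^+$ lies in $C$, so that $\P(e\in\Gamma\mid A)\le \P(e^+\text{ connected to }v\text{ in }\omega)/\P(A)\le p(n)/\P(A)$. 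Bounding one factor in each square by this quantity and summing over $n$,
\[\sum_{e}\P(e\in\Gamma\mid A)^2\;\le\;\sum_{n\ge1}\frac{p(n)}{\P(A)}\sum_{e\in\Pi_n}\P(e\in\Gamma\mid A)\;=\;\frac{1}{\P(A)}\sum_{n\ge1}p(n)<\infty.\]
Applying the method of random paths under the conditional measure $\P(\,\cdot\mid A)$ shows the left-hand side is at least $\Cap(v)^{-1}$; rearranging yields $\P(A)\le \Cap(v)\sum_{n\ge1}p(n)$, and if $T$ is recurrent this forces $\P(A)=0$.

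I do not expect a serious obstacle: the whole argument turns on the idea of conditioning on $A$ and reading off the lexicographically least ray, after which everything is bookkeeping. The points needing a little care are (i) that $\Gamma$ is genuinely measurable and that the ``one edge of $\Pi_n$ per ray'' identity is precisely what makes $e\mapsto\P(e\in\Gamma\mid A)$ a bona fide unit flow from $v$ to infinity --- equivalently, what licenses the method of random paths (Thompson's principle) in this setting --- and (ii) that the displayed computation simultaneously proves $T$ is transient whenever $\P(A)>0$, which is why the recurrent case emerges cleanly as the endpoint $\Cap(v)=0$ of the inequality rather than needing a separate treatment. Those preferring to avoid conditioning can instead write down the unit flow $\theta(e)=\P(\{e\in\Gamma\}\cap A)/\P(A)$ on $T$ explicitly and invoke Thompson's principle directly, which is exactly what the method of random paths packages.
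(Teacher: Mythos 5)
Your proof is correct and is essentially the same as the paper's: both condition on the event that the component of $v$ is infinite, extract a canonical ray (your lexicographically least ray is the paper's leftmost path in a fixed planar drawing), bound $\P(e\in\Gamma\mid A)$ by $p(n)/\P(A)$ while using that the ray crosses each distance level exactly once, and apply the method of random paths. The only cosmetic difference is that you index the energy sum by edges in each level rather than by vertices, which the paper identifies anyway.
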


\begin{proof}
Suppose that the component containing $v$ in $\omega$ is infinite with positive probability; the inequality holds trivially otherwise. Denote this event $\sA$. Fix a drawing of $T$ in the plane rooted at $v$. On the event $\sA$, let $\Gamma=\Gamma(\omega)$ be the leftmost simple path from $v$ to infinity in $\omega$. 
Observe that \[\P(u \in \Gamma \mid \sA) \leq \frac{\P(u \text{ is connected to $v$ in $\omega$})}{\P(\sA)}\leq \frac{p(n)}{\P(\sA)} \, \text{ and } \,
\sum_{\Vert u\Vert =n}\P(u \in \Gamma \mid \sA) =1,\]
so that
\[\sum_{\Vert u\Vert =n} \P(u \in \Gamma \mid \sA)^2 \leq \frac{p(n)}{\P(\sA)}.\]
and hence 
\begin{multline*}\sum_{e\in E}\P(e \in \Gamma \mid \sA)^2=\sum_{u\in V\setminus\{v\}}\P(u \in \Gamma \mid \sA)^2 = \sum_{n\geq 1}\sum_{\Vert u\Vert =n} \P(u \in \Gamma \mid \sA)^2 \leq \frac{1}{\P(\sA)}\sum_{n\geq1}p(n). \end{multline*}
Applying the method of random paths (taking our measure on random paths to be the conditional distribution of $\Gamma$ given $\sA$), we deduce that
\[\Cap(v) \geq \left(\frac{1}{\P(\sA)}\sum_{n\geq1}p(n)\right)^{-1},\]
which rearranges to give the desired inequality. 
\end{proof}

 Comparing sums with integrals, hypothesis $(2)$ of \cref{T:anchor} implies that
\[\sum_{n\geq1} \exp\left(-\eps\frac{c_v^2}{4\alpha^2}\left(\int_{\delta n}^\infty\frac{1}{f(t)^2}\right)^{-1}\right) < \infty\]
for every $\eps>0$, and we deduce from \cref{eq:percestimate}, \cref{L:perc} and \cref{T:recurrentedit} that the component containing $v$ in $\F \setminus \cI_{[-\eps,0]}$ is finite a.s. Since $v$ was arbitrary, every component of $\F\setminus \cI_{[-\eps,0]}$ is finite a.s. We conclude by applying \cref{L:criterion}.
\end{proof}

\subsection{Unimodular random rooted graphs}\label{Subsec:reversible}

\begin{proof}[Proof of \cref{thm:unimod}]
Let $(G,\rho)$ be a transient unimodular random rooted network, let $\sI$ be the interlacement process on $G$ and let $\F=\AB_0(\sI)$. It is known \cite[Theorem 6.2, Proposition 7.1]{AL07} that every component of $\F$ has at most two ends a.s. Suppose for contradiction that $\F$ contains a two-ended component with positive probability. The \textbf{trunk} of a two-ended component of $\F$ is defined to be the unique doubly infinite simple path that is contained in the component. Define $\text{trunk}(\F)$ to be the set of vertices of $G$ that are contained in the trunk of some two-ended component of $\F$. For each vertex $v$ of $G$, let $e(v)$ be the unique oriented edge of $G$ emanating from $v$ that is contained in $\F$. For each vertex $v\in \text{trunk}(\F)$, let $s(v)$ be the unique vertex in $\text{trunk}(\F)$ that has $e(s(v))^+=v$, and let $s^n(v)$ be defined recursively for $n\geq0$ by $s^0(v)=v$, $s^{n+1}(v)=s(s^n(v))$.

 Let $\eps>0$. We claim that $s^n(\rho)\in \cI_{[-\eps,0]}$ for infinitely many $n$ a.s.\ on the event that $\rho\in \text{trunk}(\F)$.
Let $k\geq1$ and define the mass transport 
\[f_k(G,u,v,\F,\cI_{[-\eps,0]}) =\mathbbm{1}\left(
\begin{array}{l} u \text{ is in the trunk of its component in $\F$},\\ \text{$v=s^k(u)$ and }\cI_{[-\eps,0]}\cap\{s^n(v):n\geq0\}\neq \emptyset \end{array}
\right).\]
Applying the mass-transport principle to $f_k$, we deduce that
\begin{equation*}\P\left(\cI_{[-\eps,0]}\cap\{s^n(\rho):n\geq0\} \neq \emptyset \mid \rho\in \text{trunk}\right) = \P\left(\cI_{[-\eps,0]}\cap\{s^n(\rho):n\geq k\} \neq \emptyset\mid \rho \in \text{trunk}\right)
\end{equation*}
for all $k\geq 0$. (Here we are using the fact that $(G,\rho,\F,\cI_{[-\eps,0]})$ is a unimodular rooted marked graph, see \cite{AL07} for appropriate definitions.) By taking the limit as $k\to\infty$,  we deduce that 
\begin{multline*}\P\left(\text{$s^n(\rho)\in\cI_{[-\eps,0]}$ for infinitely many $n$} \mid \rho\in \text{trunk}\right)\\ = \P\left(\cI_{[-\eps,0]}\cap\{s^n(\rho):n\geq0\} \neq \emptyset \mid \rho\in \text{trunk}\right).
\end{multline*}
It follows that
$s^n(\rho)\in\cI_{[-\eps,0]}$ for infinitely many $n$ almost surely on the event that $\rho\in\text{trunk}(\F)\cap\cI_{[-\eps,0]}$. 
Since $\rho\in\text{trunk}(\F)\cap\cI_{[-\eps,0]}$ with positive probability conditional on $\F$ and the event that $\rho\in\mathrm{trunk}(\F)$, it follows from  \cref{L:zeroonehit2} that infinitely many vertices of $\{s^n(\rho):n\geq1\}$ are hit by $\cI_{[-\eps,0]}$ a.s.\ on the event that $\rho \in\text{trunk}(\F)$.

 It follows from \cite[Lemma 2.3]{AL07} that for every vertex $v\in \text{trunk}(\F)$, $s^n(v)\in\cI_{[-\eps,0]}$ for infinitely many $n$   a.s., and consequently that the component containing $v$ in $\past_\F(v)\setminus \cI_{[-\eps,0]}$ is finite for every vertex $v$ of $G$ a.s. We conclude by applying \cref{L:criterion}. \qedhere

\end{proof}

\subsection{Excessive Ends}
\label{Subsec:Excends}

\begin{proof}[Proof of \cref{Thm:excessnotrandom}]
We may assume that $G$ is transient: if not, the WUSF of $G$ is connected, the number of excessive ends of $G$ is tail measurable, and the claim follows by tail-triviality of the WUSF \cite{BLPS}. 
Let $\sI$ be the interlacement process on $G$ and let $\langle \F_t\rangle_{t\in\R} = \langle \AB_t(\sI)\rangle_{t\in \R}$. 
The event that $\F_0$ has uncountably many ends is tail measurable, and hence has probability either 0 or 1, again by tail-triviality of the WUSF. If the number of ends of $\F_0$ is uncountable a.s., then $\F_0$ must also have uncountably many excessive ends a.s., since the number of components of $\F_0$ is countable.
 Thus, it suffices to consider the case that $\F_0$ has countably many ends a.s.

For each $t\in \R$, we call an excessive end $\xi$ of $\F_t$ \textbf{indestructible} if 
$\Cap\left(\{\gamma_i : i \geq 0\}\right)$ is finite 
for some (and hence every) simple path $\langle \gamma_i \rangle_{i\geq0}$ in $G$ representing $\xi$, and \textbf{destructible} otherwise.
Given a simple path $\gamma=\langle \gamma_i \rangle_{i\geq0}$, write $\gamma_{i,i+1}$ for the oriented edge that is traversed by $\gamma$ as it moves from $\gamma_i$ to $\gamma_{i+1}$, and let $\gamma_{i,i-1}=-\gamma_{i-1,i}$.  
Observe, as we did at the beginning of \cref{Sec:ends}, that a simple path $\langle \gamma_i \rangle_{i\geq0}$ in $G$ represents an excessive end of $\F_t$ if and only if 
$e_t(\gamma_{i},\sI) = \gamma_{i,i-1}$
for all sufficiently large values of $i$ (equivalently, if and only if the reversed oriented edges $-\gamma_{i,i+1}$ are contained in $\F_t$ for all sufficiently large values of $i$). 
 Since $\F_0$ has countably many ends a.s.\ by assumption, it follows from \cref{L:zeroonehit} that for every destructible end $\xi$ of $\F_0$ and every infinite simple path $\langle \gamma_i \rangle_{i\geq0}$ in $G$ representing $\xi$, the trace $\{ \gamma_i : i\geq 0\}$ of $\gamma$ is hit by $\sI_{[-\eps,0]}$ infinitely often a.s.\ for every $\eps>0$.  
Recall from the proof of \cref{L:criterion} that, on the event that $v\notin \cI_{[-\eps,0]}$,  the past of $v$ in $\F_{-\eps}$ is contained in subgraph of $\past_{\F_0}(v)$ induced by the complement of $\cI_{[-\eps,0]}$. It follows that $v$ a.s.\ does not have any destructible ends in its past in $\F_{-\eps}$ a.s.\ on the event that $v\notin \cI_{[-\eps,0]}$, and so, by stationarity,
\[\P(v \text{ has a destructible end in its past in $\F_0$})\leq\P(v \in \cI_{[-\eps,0]}) \xrightarrow[\eps\to0]{} 0.  \] 
Since the vertex $v$ was arbitrary, we deduce that $\F_0$ does not contain any destructible excessive ends a.s.

 Since every excessive end of $\F_0$ is indestructible a.s., it follows from  \cref{L:zeroonehit2} that for every excessive end $\xi$ of $\F_0$ and every path $\langle v_i \rangle$ representing $\xi$, only finitely many of the vertices $v_i$ are hit by $\sI_{[t,0]}$ a.s.\ for every $t\leq 0$. Since $\F_0$ has at most countably many excessive ends a.s., we deduce that 
every path $\langle v_i \rangle_{i\geq 0}$ that represents an excessive end of $\F_0$ also represents an excessive end of $\F_t$ for every $t\leq0$. In particular, the cardinality of the set of excessive ends of $\F_t$ is at least the cardinality of the set of excessive ends of $\F_0$ a.s.\ for every $t\leq 0$. Since, by \cref{Prop:mixing}, $\langle \F_{t}\rangle_{t\in\R}$ is stationary and ergodic, we deduce that the cardinality of the set of excessive ends of $\F_0$ is a.s.\ equal to some constant. 
\end{proof}

\begin{proof}[Proof of \cref{cor:finmod}]
Let $G''$ be the network that has all the edges of both $G$ and $G'$. By symmetry, it suffices to show that the wired uniform spanning forests of $G$ and $G''$ have the same number of excessive ends a.s.  Let $\F$ and $\F''$ be samples of the WUSFs of $G$ and $G''$ respectively, and let $A$ be the set of edges of $G''$ that are not edges of $G$. Since $A$ is finite and $G$ is connected, the event $\sA = \{A \cap \F'' = \emptyset\}$ has positive probability (this implication is easily proven in several ways, e.g.\ using either Wilson's algorithm, the Aldous-Broder algorithm, or the Transfer Current Theorem \cite{BurPe93}). The spatial Markov property of the WUSF implies that the conditional distribution of $\F''$ given $\sA$ is equal to the distribution of $\F$, and in particular the conditional distribution of the number of excessive ends of $\F''$ given $\sA$ has the same distribution as the number of excessive ends of $\F$. The claim now follows from \cref{Thm:excessnotrandom}.
\end{proof}

\section{Ends and rough isometries}\label{Sec:Rough}
 Recall that a \textbf{rough isometry} from a graph $G=(V,E)$ to a graph $G'=(V',E')$ is a function $\phi:V\to V'$ such that, letting $d_G$ and $d_{G'}$ denote the graph distances on $V$ and $V'$, there exist positive constants $\alpha$ and $\beta$ such that the following conditions are satisfied:
\begin{enumerate}
\item \textbf{($\phi$ roughly preserves distances.)} For every pair of vertices $u,v\in V$, 
\[\alpha^{-1}d_G(u,v)-\beta \leq d_{G'}(\phi(u),\phi(v)) \leq \alpha d_G(u,v) +\beta.\]
\item \textbf{($\phi$ is almost surjective.)} For every vertex $v'\in V'$, there exists a vertex $v \in V$ such that $d_G(\phi(v),v')\leq \beta$.
\end{enumerate}
For background on rough isometries, see \cite[\S2.6]{LP:book}. 
The final result of this paper answers negatively Question 7.6 of Lyons, Morris and Schramm~\cite{LMS08}, which asked whether the property of having one-ended WUSF components is preserved under rough isometry of graphs.

\begin{thm}\label{thm:roughexample}
There exist two rough-isometric, bounded degree graphs $G$ and $G'$ such that every component of the wired uniform spanning forest of $G$ has one-end a.s., but the wired uniform spanning forest of $G'$ contains a component with uncountably many ends a.s. 
\end{thm}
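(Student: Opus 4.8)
The plan is to build $G$ and $G'$ as ``thickenings'' of $\Z^3$ along a subset of the lattice directions, arranged so that the two graphs are rough-isometric but have genuinely different capacity profiles along the relevant subsets. Concretely, I would start with $\Z^3$ and, for $G$, replace each vertex by a copy of a fixed finite graph (or attach bounded bushes) in a homogeneous way so that $G$ is still roughly isometric to $\Z^3$, transient, of bounded degree, and satisfies a genuine (non-anchored) $d$-dimensional isoperimetric inequality with $d=3>2$; by \cref{T:anchor} every component of its WUSF is then one-ended a.s. The subtlety is all in $G'$: I want a bounded-degree graph rough-isometric to $\Z^3$ (hence to $G$) in which there is a distinguished subtree along which capacities grow slowly enough that \cref{L:criterion} fails, and in fact so that the WUSF a.s.\ contains a component with uncountably many ends. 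The natural device is to take $\Z^3$ (or $G$) and, along a fixed geodesic ray or along a sparse subset of vertices, glue in long thin tubes --- paths of length comparable to the distance from the origin --- so that from far away the graph still looks three-dimensional at every scale (the tubes are rare enough not to change the isoperimetric class up to rough isometry) but, seen from within, these tubes form a binary-tree-like skeleton of bounded degree with edge lengths growing geometrically.

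The key computation is a capacity estimate. In $G'$ I would identify a bounded-degree subtree $T\subseteq G'$, isomorphic to a binary tree with the $n$th level at graph-distance $\asymp c^n$ from the root for some $c>1$, such that the capacity in $G'$ of any path in $T$ from the root to level $n$ is bounded; more importantly, for each branching vertex the capacity of the bush hanging below it, intersected with the complement of an interlacement slab $\cI_{[-\eps,0]}$, should remain infinite with positive probability uniformly. The heuristic is that a geometrically-growing binary tree has branching number $1$ at the ``metric'' level but, because edge lengths grow, the random walk escaping to infinity along $T$ spends most of its time on long edges, so the harmonic measure from infinity spreads over exponentially many leaves and the relevant traces have bounded capacity. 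I would then feed this into the WUSF: using Wilson's algorithm rooted at infinity (or the interlacement Aldous-Broder algorithm of \cref{thm:IAB}), show that the WUSF restricted to $T$ stochastically dominates a supercritical-type branching structure, so that with positive probability --- and hence, by tail-triviality of the WUSF \cite{BLPS} and a $0$--$1$ argument of the kind used in the proof of \cref{Thm:excessnotrandom}, almost surely --- some component of $\F'$ contains a subtree with exponential growth and no leaves, which forces uncountably many ends.

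I would organize the proof as: (1) construct $G$, verify bounded degree, transience, and a $3$-dimensional (non-anchored) isoperimetric inequality, and invoke \cref{T:anchor} for one-endedness; (2) construct $G'$ by the tube-gluing modification and exhibit an explicit rough isometry $\phi\colon G\to G'$ with the constants in the definition at the start of \cref{Sec:Rough}, checking both that $\phi$ roughly preserves distances (the tubes are short-cuts of length comparable to the distance they span, so distances change only by bounded multiplicative and additive factors along them, and are unchanged elsewhere) and that $\phi$ is almost surjective; (3) inside $G'$, describe the skeletal binary tree $T$ and prove the capacity bounds, the central estimate being that $\Cap_{G'}(\text{trace of a root-to-level-}n\text{ path in }T)$ stays bounded as $n\to\infty$ while the walk from infinity hits each fixed finite piece of $T$; (4) deduce via Wilson's algorithm / interlacement Aldous-Broder that $\F'\cap T$ contains, with positive probability, an infinite subtree all of whose vertices have $\ge 2$ children in $\F'\cap T$, and upgrade to ``a.s.'' by tail-triviality; (5) conclude that this component has uncountably many ends.

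The main obstacle I anticipate is step (3)--(4): making the capacity estimate in $G'$ rigorous and then transferring it to a statement about the WUSF. The difficulty is twofold --- first, one must choose the tube lengths and branching pattern so that the random walk on $G'$ genuinely behaves like a walk on a geometrically-growing tree near $T$ while the ambient $\Z^3$-structure keeps everything transient with bounded degree and rough-isometric to $G$; getting a clean enough description of the walk to compute (or bound) capacities of the relevant paths and bushes may require a careful electrical-network / Nash--Williams argument rather than an explicit formula. Second, translating ``the bush below each branch point has infinite capacity with positive probability even after removing an interlacement slab'' into ``the WUSF component branches'' needs the right application of Wilson's algorithm rooted at infinity together with the fact that loop-erased walk from infinity follows $T$ with positive probability at each branch --- essentially one must show the loop-erased trajectories do not collapse the branching of $T$, which is where the geometric growth of edge lengths is doing the work and where I'd expect to spend the most effort.
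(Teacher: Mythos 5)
Your plan founders on a conceptual obstruction before any of the technical work begins: the graph $G'$ you propose cannot exist. For bounded-degree graphs, a \emph{non-anchored} $f(t)$-isoperimetric inequality is a rough-isometry invariant (up to constants in $f$); this is standard, see \cite[Ch.~6]{LP:book}. Hence any bounded-degree graph roughly isometric to $\Z^3$ satisfies a $3$-dimensional isoperimetric inequality, and the non-anchored criterion of Lyons, Morris and Schramm \cite{LMS08} (as presented in \cite[Theorem 10.43]{LP:book}) already forces every component of its WUSF to be one-ended a.s. --- that criterion needs only $\int f(t)^{-2}\,\mathrm{d}t<\infty$, which holds for $f(t)=t^{2/3}$. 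So either your tube-decorated graph really is roughly isometric to $\Z^3$, in which case its WUSF components are one-ended and step (4) is impossible; or it is not, in which case step (2) fails (and indeed it is the latter that threatens first: a tube of length comparable to the distance it spans has a midpoint at unbounded distance from the image of $\phi$, violating almost-surjectivity). The counterexample must therefore be sought among pairs of graphs \emph{neither} of which satisfies the LMS isoperimetric hypothesis; any ambient $\Z^d$ with $d\ge 3$ is exactly the wrong place to look. The secondary gaps you flag yourself in steps (3)--(4) (the capacity bounds and the branching of the loop-erased walk) are real but moot given this.

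For contrast, the paper's construction lives entirely in the ``bad isoperimetry'' regime and makes the rough isometry trivial. Take a $3$-regular tree, stretch each level-$n$ edge into a path of length $k^n$, hang a similarly stretched binary-tree bush off each branch vertex, and replace each bush/connector edge by $m$ parallel edges; call the result $G^m_k$. For fixed $k$, all the $G^m_k$ share the same vertex set and the identity map is an \emph{isometry} --- only edge multiplicities (i.e.\ conductances) change. A one-line network reduction bounds the probability $p(m,k)$ that a walk from a child ever hits its parent by $k/(k+2+m)\le p(m,k)\le (k+2)/(k+2+2m)$, and Wilson's algorithm rooted at infinity shows the past of the root, restricted to the underlying $3$-regular tree, sandwiches between binomial branching processes with success probability $p(m,k)$: supercritical when $k\ge m+3$ (a component with uncountably many ends, upgraded to a.s.\ by tail-triviality \cite{BLPS}), and (sub)critical when $m\ge k+2$ (all pasts finite, hence one-endedness). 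Taking $G=G^6_4$ and $G'=G^1_4$ finishes the proof. The lesson is that the failure of rough-isometry invariance is driven by changing conductances on an isometrically fixed graph with bottlenecks, not by geometric perturbation of a lattice.
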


The proof of \cref{thm:roughexample} uses Wilson's algorithm rooted at infinity. We refer the reader to \cite[Proposition 10.1]{LP:book} for an exposition of this algorithm. The description as a branching process of the past of the WUSF of a regular trees with height-dependent exponential edge stretching  is adapted from \cite[\S 11]{BLPS}, and first appeared in the work of H\"aggstr\"om \cite{haggstrom1998uniform}.

\begin{proof}[Proof of \cref{thm:roughexample}]
Let $T=(V,E)$ be a 3-regular tree with root $\rho$. We write $\Vert u \Vert$ for the distance between $u\in V$ and $\rho$. For each positive integer $k$, let $T_k=( V_k,E_k)$ denote the tree obtained from $T$ by replacing every edge connecting a vertex $u$ of $T$ to its parent by a path of length $k^{\Vert u\Vert }$. We identify the degree 3 vertices of $T_k$ with the vertices of $T$. 
For each vertex $u \in V$, let $S(u)$ be a binary tree  with root $\rho_u$ and let $S_k(u)$ be the tree obtained from $S(u)$ by replacing every edge with a path of length $k^{\Vert u\Vert +1}$. Finally, for each pair of positive integers $(k,m)$, let $G^m_k$ be the graph obtained from $T_k$ by, for each vertex $u \in V$, adding a path of length $k^{\Vert u\Vert +1}$ connecting $u$ to $\rho_u$ and then replacing every edge in each of these added paths and every edge in each of the trees $S_k(u)$ by $m$ parallel edges. The vertex degrees of $G^m_k$ are bounded by $3+m$, and the identity map is an isometry (and hence a rough isometry) between $G^m_k$ and $G^{m'}_k$ whenever $k,m$ and $m'$ are positive integers. See Figure 1 for an illustration.

\begin{figure}
\includegraphics[width=0.9\textwidth]{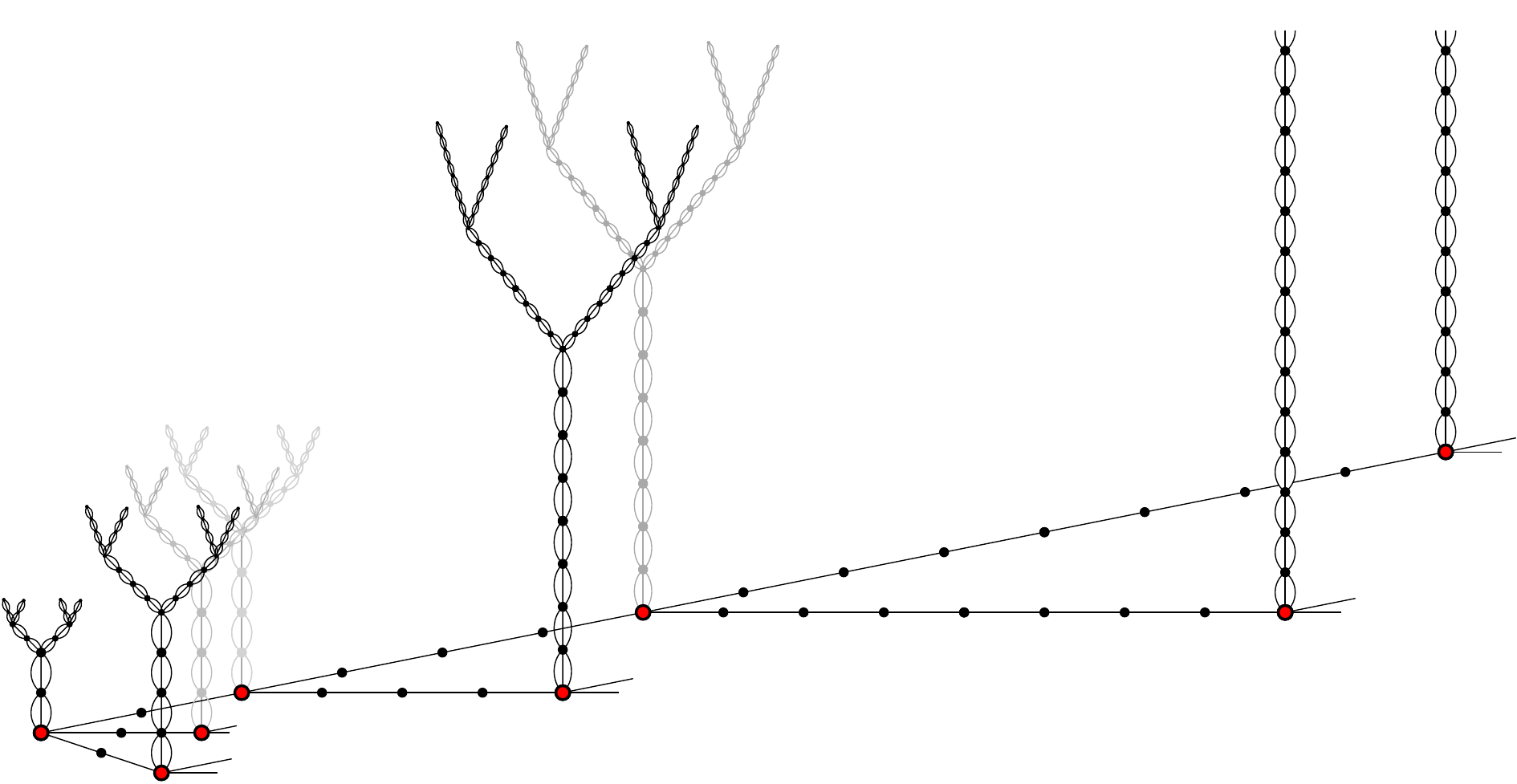}
\caption{An illustration of the graph $G_2^3$. Red vertices correspond to vertices of the $3$-regular tree $T$. Only three generations of each of the trees $S(v)$ are pictured.}
\end{figure}

Let $k$ and $m$ be positive integers. Observe that for every vertex $v$ of $T$ and every child $u$ of $v$ in $T$, the probability that simple random walk on $G_k^m$ started at $u$ ever hits $v$ does not depend on the choice of $v$ or $u$. 
Denote this probability $p(m,k)$. We can bound $p(m,k)$ as follows. 
\begin{equation}\label{eq:pbound}\frac{k}{k+2+m}\leq p(m,k)\leq \frac{k+2}{k+2+2m}. \end{equation}
The lower bound of $k/(k+2+m)$ is exactly the probability that the random walk started at $u$ visits $v$ before visiting any other vertex of $T$ or visiting $\rho_u$. The upper bound of $(k+2)/(k+2+2m)$ is exactly the probability that the random walk started at $u$ ever visits a neighbour of $u$ in $T$. This can be computed by a straightforward  network reduction (see \cite{LP:book} for background): The conductance to infinity from the root of a binary tree is $1$, so that, by the series and parallel laws, the  effective conductance to infinity from $u$ in the subgraph of $G^m_k$ spanned by the vertices of $S_k(u)$ and the path connecting $u$ to $S_k(u)$ is 
$2mk^{-\Vert u \Vert -1}$.
On the other hand, the effective conductance between $u$ and its parent $v$ is $k^{-\Vert u \Vert}$, while the effective conductance between $u$ and each of its children is $k^{-\Vert u \Vert -1}$. It follows that the probability that a random walk started at $u$ ever visits a neighbour of $u$ in $T$ is exactly
\[ \frac{ k^{-\Vert u \Vert} + 2k^{-\Vert u \Vert-1}}{k^{-\Vert u \Vert} + 2k^{-\Vert u \Vert-1}+2mk^{-\Vert u \Vert -1} }= \frac{k+2}{k+2+2m}\]
as claimed.

Let $\F^m_k$ be a sample of $\WUSF_{G_k^m}$ generated using Wilson's algorithm on $G_k^m$, starting with the root $\rho$ of $T$. Let $\xi$ be the loop-erased random walk in $G_k^m$ beginning at $\rho$ that is used to start our forest. The path $\xi$ includes either one or none of the neighbours of $\rho$ in $T$ and so, in either case, there are at least two neighbours $v_1$ and $v_2$ of $\rho$ in $T$ that are not contained in this path. Continuing to run Wilson's algorithm from $v_1$ and $v_2$, we see that, conditional on $\xi$, the events $A_1=\{v_1$ is in the past of $\rho$ in $\F_k^m\}$ and $A_2=\{v_2$ is in the past of $\rho$ in $\F_k^m\}$ are independent and each have probability $p(m,k)$. Furthermore, on the event $A_i$, we add only the path connecting $v_i$ and $\rho$ in $G_k^m$ to the forest during the corresponding step of Wilson's algorithm. Recursively, we see that the restriction to $T$ of the past of $\rho$ in $\F$ contains a Galton-Watson branching process with Binomial offspring distribution $(2, p(m,k))$. 
If $k \geq m + 3$  this branching process is supercritical, so that $\F_k^m$ contains a component with uncountably many ends with positive probability. By tail triviality of the WUSF \cite[Theorem 10.18]{LP:book}, $\F_k^m$ contains a component with uncountably many ends a.s.\ when $k\geq m +3$. 

On the other hand, a similar analysis shows that the restriction to $T$ of past of  $\rho$ in $\F_k^m$  is stochastically dominated by a binomial $(3,p(m,k))$ branching process. (The $3$ here is to account for the possibility that every child of $\rho$ in $T$ is in its past). If $m \geq k + 2$, this branching process is either critical or subcritical, and we conclude that the restriction to $T$ of the past of $\rho$ in $\F_k^m$ is finite a.s. Condition on this restriction. Similarly again to the above, the restriction to $S_k(v)$ of the past of $v$ in $\F_m^k$ is stochastically dominated by a critical binomial $(2,1/2)$ branching process for each vertex $v$ of $T$, and is therefore finite a.s. We conclude that the past of $\rho$ in $\F_k^m$ is finite a.s.\ whenever $m \geq k +2$. A similar analysis shows that the past in $\F_k^m$ of every vertex of $G_k^m$ is finite a.s., and consequently that every component of $\F_k^m$ is one-ended a.s.\ whenever $m \geq k +2$.

Since $4 \geq 1 +3$ and $6 \geq 4 +2$, the wired uniform spanning forest $\F_4^1$ of $G_4^1$ contains an infinitely-ended component a.s., and every component of the wired uniform spanning forest $\F_4^{6}$ of $G_4^{6}$ is one-ended a.s. \qedhere
\end{proof}

\section{Closing Discussion and Open Problems}\label{sec:problems}

\subsection{The FMSF of the interlacement ordering}
One way to think about the Interlacement Aldous-Broder algorithm is as follows. Given the interlacement process $\sI$ on a transient network $G$, we can define a total ordering of the edges of $G$ according to the order in which they are traversed by the trajectories of $\sI_{[0,\infty)}$. That is, we define a strict total ordering $\prec$ of $E$ by setting $e_1 \prec e_2$ if and only if either $e_1$ is first traversed by a trajectory of $\sI_{[0,\infty)}$ at a smaller time than $e_2$ is first traversed by a trajectory of $\sI_{[0,\infty)}$, or if $e_1$ and $e_2$ are both traversed for the first time by the same trajectory of $\sI_{[0,\infty)}$, and this trajectory traverses $e_1$ before it traverses $e_2$. We call $\prec$ the \textbf{interlacement ordering} of the edge set $E$.

It is easily verified that $\AB_0(\sI)$ is the \textbf{wired minimal spanning forest} of $G$ with respect to the interlacement ordering. That is, an edge $e \in E$ is included in $\AB_0(\sI)$ if and only if there does not exist either a finite cycle or a bi-infinite path in $G$ containing $e$ for which $e$ is the $\prec$-maximal element. See \cite{LP:book} for background on minimal spanning forests. In light of this, it is natural to wonder what might be said about the \emph{free} minimal spanning forest of the interlacement ordering, that is, the spanning forest of $G$ that includes an edge $e\in E$ if and only if there does not exist a finite cycle in $G$ containing $e$ for which $e$ is the $\prec$-maximal element. Indeed, if this forest were the FUSF of $G$, this could be used to solve  the monotone coupling problem \cite[Question 10.6]{LP:book} (see also \cite{lyons2016invariant,mester2013invariant,bowen2004couplings}) and the almost-connectivity problem \cite[Question 10.12]{LP:book}.

Unfortunately there is little reason for this to be the case other than wishful thinking.
Indeed,
let $\sI$ be the interlacement process on a transient network $G$, and define 
\[t_c = \inf \left\{ t \in (0,\infty) : \cI_{[0,t]} \text{ is connected a.s.}\right\}.\]
Teixeira and Tykesson \cite{teixeira2013random} proved that  if $G$ is transitive, then $t_c$ is positive if and only if $G$ is nonamenable. (The amenable case of their result generalises the corresponding result for $\Z^d$, due to Sznitman \cite{Szni10}.) 
 We can apply this result to prove that the free minimal spanning forest of the interlacement ordering is distinct from the WUSF on any nonamenable transitive graph: This is similar to how the usual FMSF and WMSF (where the edge weights are i.i.d.) are distinct if and only there is a nonempty nonuniqueness phase for Bernoulli bond percolation \cite{LPS06}. Since there are many nonamenable transitive graphs where the WUSF and FUSF coincide (e.g.\ the product of a $3$-regular tree with $\Z$, see \cite[Chapter 10]{LP:book}), we deduce that there are transitive graphs (indeed, Cayley graphs) for which the FUSF does not coincide with the free minimal spanning forest of the interlacement ordering. 

%

We now give a quick sketch of this argument. Suppose that $G$ is a transitive nonamenable graph.
Observe that for every $t<t_c$, there must exist a connected component of $\cI_{[0,t)}$ and a vertex $u$ of $G$ such that a random walk started at $u$ has a positive probability not to hit the component: If not, we would have that $\cI_{[0,s]}$ was connected for every $s>t$, contradicting the assumption that $t<t_c$. Moreover, by finding a path from $u$ to the component and considering the last vertex of the path before we reach the component, the vertex $u$ can be taken to be adjacent to the component.
Let $\tau$ be the first time after $t_c/2$ that $u$ is hit by a trajectory of $\sI$, and let $e_{t_c/2}(u)$ be the oriented edge that is traversed by this trajectory as it enters $u$ for the first time. Denote this trajectory by $W$. No other trajectories of $\sI$ appear at time $\tau$ a.s. 
In light of the above discussion, by making local modifications to finitely many trajectories in $\sI_{[0,\tau)}$, we see that the following event occurs with positive probability: $\tau$ is strictly less than $t_c$, the vertices $u$ and $e_{t_c/2}(u)^-$ are both in different components of $\cI_{[0,\tau)}$ (and, in particular, are both in $\cI_{[0,\tau)}$), and $W$ hits the component of $u$ in $\cI_{[0,\tau)}$ for the first time at $u$. On this event we must have that $e_{t_c/2}(u)$ is included in the free minimal spanning forest of the interlacement ordering, but is not in $\AB_0(\sI)$, and hence the two forests do not coincide.

\subsection{Exceptional times} A natural question raised by the Interlacement Aldous-Broder algorithm concerns the existence or non-existence of \textbf{exceptional times} for the process $\langle \F_t \rangle_{t\in \R} = \langle  \AB_t(\sI)\rangle_{t\in \R}$, that is, times at which $\F_t$ has properties markedly different from the a.s.\ properties of $\F_0$. For example, we might ask whether, considering the process $\langle \F_t \rangle_{t\in \R}$ on $\Z^d$ ($d\geq 3$), there are exceptional times  when  the forest has multiply ended components,  is disconnected (if $d=3,4$), or is connected (if $d\geq 5$). (Note that the proof of \cref{Thm:excessnotrandom} implies that there do not exist exceptional times at which $\F_t$ contains \emph{indestructible} excessive ends.)

The answers to the first of these questions turn out to rather simple. Given a trajectory $W$ in a graph $G$ and a vertex $u$ of $G$ visited by the path, we define $e(W,u)$ to be the oriented edge pointing into $u$ that is traversed by $W$ as it enters $u$ for the first time, and define
\[\AB(W) = \{-e(W,u) : u \text{ is visited by $W$}\}.\]
Note that if the trace of $W$ is infinite then $\AB(W)$ is an infinite oriented tree. We define the tree of first entry edges $\AB(X)$ similarly when $X=\langle X_n \rangle_{n\geq0}$ is a path in $G$.

\begin{prop}[Exceptional times for excessive ends]
\label{prop:exceptional}
Let $G$ be a transient network, let $\sI$ be the interlacement process, and let $\langle \F_t \rangle_{t\in \R} =\langle \AB_t(\sI) \rangle_{t\in \R}$. 
Let $\sE$ be the set of times $t\in \R$ such that $\F_t$ has a multiply ended component, and let $\sE'$ be the set of times $t\in \R$ for which there exists a trajectory $W_t$ in $\sI_t$ such that $\AB(W)$ is multiply ended. 
If every component of $\F_0$ is one-ended almost surely, then the following hold almost surely.
\begin{enumerate}
	\item 
 $\sE=\sE'$, and $\sE=\emptyset$ if and only if $\F_0$ is connected almost surely.
    \item
For every $t\in \sE$, there is exactly one two-ended component of $\F_t$, and all other components are one-ended. The unique two-ended component is the union of the tree $\AB(W_t)$ with some finite bushes. 
\end{enumerate}
\end{prop}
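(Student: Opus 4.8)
The plan is to reduce every assertion to a single ``persistence'' phenomenon: along an infinite ray into the past of a vertex in $\F_t$, the hitting times $\tau_t(\cdot)$ are non-decreasing, and if their limit exceeds $t$ then the ray persists as a past-ray of $\F_{t'}$ for a whole interval of times $t'$, which is impossible because $\sE$ has Lebesgue measure zero. I will use freely that, since $\sI$ is Poisson on $\cW^\ast\times\R$ with $\sigma$-finite intensity, almost surely $|\sI_t|\le 1$ for every $t$ and $Q^\ast$-a.e.\ trajectory is doubly infinite and visits every vertex finitely often; that a locally finite oriented forest in which every vertex has out-degree one has exactly one primary end per component, so ``the component is multiply ended'' is equivalent to ``some vertex of it has infinite past'' (König's lemma); and the elementary combinatorial fact that for any doubly-infinite path $W$ visiting each vertex finitely often, $\AB(W)$ is a connected tree with exactly one primary end and \emph{at most one} excessive end (writing $n_u$ for the first time $W$ visits $u$, the out-edge of $u$ in $\AB(W)$ points to $W(n_u-1)$, whose first-visit time is strictly smaller, so the primary end is the ``backward exploration'' ray, any excessive end is a ray along which first-visit times strictly increase, and after the last visit of $W$ to any given vertex the forward exploration of $W$ runs along a single such ray, so all of them eventually merge).

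\textbf{$\sE=\sE'$.} If $t\in\sE'$, write $\sI_t=\{W_t\}$ and pick an infinite past-ray $\gamma=\langle\gamma_i\rangle_{i\ge0}$ of $\AB(W_t)$. Since every $u\in V(W_t)$ has $\tau_t(u)=t$ and $W_t$ is the only trajectory at time $t$, we get $e_t(u)=e(W_t,u)$; hence $\AB(W_t)\subseteq\F_t$ and $\gamma$ is a past-ray of $\F_t$, so $t\in\sE$. Conversely let $t\in\sE$, let $\gamma$ be an infinite past-ray of $\F_t$, and put $L=\lim_i\tau_t(\gamma_i)\in[t,\infty]$ (the limit exists because the trajectory realising $\tau_t(\gamma_{i+1})$ visits $\gamma_i$ before first entering $\gamma_{i+1}$, giving $\tau_t(\gamma_i)\le\tau_t(\gamma_{i+1})$). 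If $L>t$, then for every $t'\in(t,L)$ and every $i$ with $\tau_t(\gamma_i)>t'$ we have $\tau_{t'}(\gamma_i)=\tau_t(\gamma_i)$ and hence $e_{t'}(\gamma_i)=e_t(\gamma_i)$, so a tail of $\gamma$ is a past-ray of $\F_{t'}$ and $t'\in\sE$; this gives $|\sE|>0$, contradicting $\E|\sE|=\int_\R\P(\F_s\text{ has a multiply ended component})\,\mathrm ds=0$ (each $\F_s\eqd\F_0$ by \cref{Prop:mixing}, and $\F_0$ is one-ended a.s.\ by hypothesis). Therefore $L=t$, so $\tau_t(\gamma_i)=t$ for all $i$, whence $\sI_t=\{W_t\}$ and, exactly as above, $\gamma$ is an infinite past-ray of $\AB(W_t)$: so $t\in\sE'$. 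The same computation shows that at every $t\in\sE$, \emph{every} infinite past-ray of $\F_t$ is contained in $\AB(W_t)$.

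\textbf{Structure at exceptional times (part 2).} Fix $t\in\sE$, write $\sI_t=\{W_t\}$, and let $C$ be the component of $\F_t$ containing $V(W_t)$; this is well-defined since $\AB(W_t)\subseteq\F_t$ is connected. By the last sentence above, every vertex of $\F_t$ with infinite past lies in $V(W_t)\subseteq C$; hence every component other than $C$ is one-ended, every vertex of $C\setminus V(W_t)$ has finite past (so the parts of $C$ hanging off $V(W_t)$ are finite bushes), and the excessive ends of $C$ coincide with those of $\AB(W_t)$, of which there is exactly one, since $\AB(W_t)$ is multiply ended and has at most one excessive end. Consequently $C$ is two-ended, its trunk is contained in $\AB(W_t)$, we have $C=\AB(W_t)\cup(\text{finite bushes})$, and $C$ is the unique two-ended component of $\F_t$, all others being one-ended. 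This is part (2).

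\textbf{When $\sE$ is empty (part 1).} By $\sE=\sE'$ and the zero-one law for Poisson point processes, $\sE=\emptyset$ a.s.\ if and only if $Q^\ast(\{W:\AB(W)\text{ is two-ended}\})=0$. If $\F_0$ is disconnected a.s., a short path argument produces a vertex $w$ with $0<\P(\rho\leftrightarrow w\text{ in }\F_0)<1$; since the event ``$\rho\leftrightarrow w$ in $\F_t$ for all $t$'' and its negation are shift-invariant, ergodicity of $\langle\F_t\rangle$ (\cref{Prop:mixing}) forces the connectivity of $\{\rho,w\}$ to change as $t$ varies, so some component of $\F_t$ splits into two infinite pieces at some time; a one-ended tree cannot do this, since a wired uniform spanning forest has no finite component, so $\F_t$ has a multiply ended component there and $\sE\neq\emptyset$. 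Conversely, if $\F_0$ is connected a.s.\ --- hence a one-ended tree a.s.\ --- I would show $Q^\ast(\{W:\AB(W)\text{ two-ended}\})=0$ by identifying two-endedness of $\AB(W)$ with the property that the forward half of $W$ eventually avoids the range of its backward half, and then matching this, via an excursion/Palm decomposition of $\sI$ into a pair of essentially independent one-sided random walks, with the \cite{BLPS} characterisation of connectivity of the $\WUSF$ in terms of infinitely-often intersection of independent random walk ranges. I expect this last equivalence --- together with the careful version of the splitting argument just sketched --- to be the main obstacle; everything else is either soft or an elementary computation.
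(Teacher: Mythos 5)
Your treatment of $\sE=\sE'$ is essentially the paper's argument (monotonicity of $\tau_t$ along a past-ray, persistence on an interval of times contradicting a.s.\ one-endedness at a dense set of times, hence $\tau_t\equiv t$ along the ray), and the observation that every infinite past-ray of $\F_t$ lies in $\AB(W_t)$ is exactly what the paper uses. The fatal problem is the ``elementary combinatorial fact'' that $\AB(W)$ has at most one excessive end for \emph{any} doubly-infinite path visiting each vertex finitely often. This is false as a deterministic statement. Take the graph with a root $o$ joined to three rays $\langle (n,j)\rangle_{n\geq 0}$, $j\in\{0,1,2\}$, with ray edges $(n,j)\sim(n+1,j)$ and rungs $(n,j)\sim(n,j')$: one can route a path, visiting each vertex at most five times, that alternately extends the $j=0$ and $j=1$ rays (shuttling between them through already-visited vertices of the $j=2$ ray), so that $(n+1,0)$ is always first entered from $(n,0)$ and $(n+1,1)$ from $(n,1)$. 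The resulting first-entry tree contains two disjoint rays with strictly increasing first-visit times, i.e.\ two excessive ends. Your heuristic that ``the forward exploration runs along a single such ray, so all of them eventually merge'' is precisely what fails: after the last visit to any fixed vertex the walk still goes on to first-enter new vertices on \emph{both} rays. Since part (2) rests entirely on ``exactly one excessive end,'' and your part (1) also leans on the at-most-two-ends claim, this is not a repairable detail but a missing theorem.

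What the paper does instead is probabilistic and uses the hypothesis of the proposition: it introduces a variant of interlacement Aldous--Broder in which one first runs a random walk from a fixed vertex and then runs $\sI_{[0,\infty)}$, concluding that $\AB(X)$ is one-ended a.s.\ for a random walk $X$ \emph{whenever the WUSF of $G$ is one-ended a.s.}; applied to the forward half $\langle W_i\rangle_{i\geq 0}$ of a trajectory (every excessive end of $\AB(W)$ has first-visit times tending to $+\infty$ and is therefore an end of $\AB(\langle W_i\rangle_{i\geq0})$), this gives at most one excessive end $Q^*$-a.e. The same device supplies the direction you explicitly leave open ($\F_0$ connected $\Rightarrow \sE=\emptyset$): the unique forward ray of $\AB(\langle W_i\rangle_{i\geq0})$ is the loop-erasure of the forward half, which, when the WUSF is connected, is hit infinitely often by the backward half, so its end is destroyed in $\AB(W)$. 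You correctly anticipate that this is where the real work lies, but as written the proposal both asserts a false lemma and defers the two substantive probabilistic inputs, so it does not constitute a proof.
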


Since \cref{prop:exceptional} is tangential to the paper, we leave out some details from the proof.

\begin{proof}
We first prove that $\sE=\sE'$ almost surely. 
The containment $\sE \subseteq \sE'$ is immediate, and holds deterministically. 
Let $\Omega$ be the almost sure event that every component of $\F_t$ is one-ended for every rational $t$, and that no two trajectories of $\sI$ have the same arrival time. We claim that $\sE=\sE'$ pointwise on the event $\Omega$.
Suppose that 
$\Omega$ holds and that 
$t\in \sE$, so that there exists a sequence of vertices $\langle v_i \rangle_{i\geq0}$ such that $v_i = e_t(v_{i+1})^-$ for each $i\geq 0$. 
In particular, the arrival times $\tau_t(v_i)$ are increasing. 
We claim that we must have $\tau_t(v_i)=t$ for all $i\geq 0$. Indeed, if $\tau_t(v_i) \geq t+\eps$ for some $\eps>0$ and all $i$ larger than some $i_0$, then we would have that $e_{t+\delta}(v_i)=e_t(v_i)$ for all $0<\delta \leq \eps$ and all $i\geq i_0$. In this situation, we would therefore have that $\F_{t+\delta}$ contained a multiply ended component for every $0<\delta\leq \eps$, contradicting the assumption that the event $\Omega$ occured. Thus, we must have that there exists a trajectory $W_t \in \sI_t$ (which is unique by definition of $\Omega$), and the sequence $v_i$ gives an excessive end in the tree $\AB(W)$. Since the sequence $v_i$ represented an arbitrary excessive end of $\F_t$, it follows that every excessive end of $\F_t$ arises from the tree $\AB(W)$ on the event $\Omega$.

Now, if $\F_0$ is not connected a.s., then there is a vertex $v$ of $G$ such that two independent random walks from $v$ do not intersect with positive probability, and it follows that there a.s.\ exist trajectories in $\sI$ such that $\AB(W)$ has at least two ends. 
It remains to prove that the trees $\AB(W)$ have at most two-ends for every trajectory $W$ in $\sI$, and are all one-ended if $\F_0$ is a.s.\ connected. Since there are only countably many trajectories in $\sI$, it suffices to analyze a single bi-infinite random walk.
 To prove this, it is convenient to introduce a variant of the interlacement Aldous-Broder in which we first run a simple random walk started from a fixed vertex (considered to arrive at time zero), and then run the interlacement process $\sI_{[0,\infty)}$, and form a forest from the first entry edges. It is not difficult to see, by a slight modification of the proof \cref{thm:IAB}, that the forest produced this was is the wired uniform spanning forest: In the finite exhaustion, this corresponds to first running a random walk from $v$ until hitting the distinguished boundary vertex, and then decomposing the rest of the walk into excursions from the boundary vertex. 
  Using this algorithm, it follows that  $\AB(X)$ is one-ended a.s.\ whenever $X$ is a random walk on a transient graph $G$ for which the wired uniform spanning forest is one-ended.

 Now suppose that $W=\langle W_n \rangle_{n\in \Z}$ is a bi-infinite random walk. If $\langle v_i \rangle_{i\geq 0}$ is a sequence of vertices in $G$ corresponding to an excessive end of $\AB(W)$, then we must have that $v_i = e(v_{i+1})^-$ for all $i$ sufficiently large, and it follows that this excessive end must be an end of the tree $\AB(\langle W_i \rangle_{i \geq 0})$, completing the proof that $\AB(W)$ has at most two ends. On the other hand, we note that the unique path to infinity from $W_0$ in $\AB(\langle W_i \rangle_{i\geq0})$ is exactly the loop-erasure of $\langle W_i \rangle_{i\geq0}$, and if $\F_0$ is connected a.s.\ then this path is hit infinitely often a.s.\ by $\langle W_i \rangle_{i <0}$. We deduce that in this case this end is not present in the tree $\AB(W)$, completing the proof. 
\end{proof}

We do not know if there exist exceptional times for (dis)connectivity. We expect that such times do not exist, but it would be very interesting if they do.

\begin{question}
\label{Q:exceptionaltimes}
Let $d\geq 3$, let $\sI$ be the interlacement process on $\Z^d$, and let $\langle \F_t \rangle_{t\in \R}=\langle \AB_t(\sI)\rangle_{t\in \R}$. If $d=3,4$, do there exist times at which $\F_t$ is disconnected? If $d\geq 5$, do there exist times at which $\F_t$ is connected? 
\end{question}

If the answer to \cref{Q:exceptionaltimes} is positive, it would be interesting to further understand the structure of the set of exceptional times and the geometry of the forest $\F_t$ at a typical exceptional time.  It is easy to see that, unlike for excessive ends, the arrival times of trajectories are \emph{not} exceptional times for connectivity, so if exceptional times do exist they are likely to have a more interesting structure.  We note that there is a rich theory of exceptional times for other models such as dynamical percolation, addressing many analogous questions. See e.g.\ \cite{garban2010fourier,steif2009survey,MR2235173,hammond2015local}. 

A related question concerns the decorrelation of connectivity events under the dynamics.



\begin{question}\label{Q:correlations}
Let $d\geq 5$, let $\sI$ be the interlacement process on $\Z^d$, and let $\langle \F_t \rangle_{t\in \R}= \langle \AB_t(\sI) \rangle_{t\in \R}$. How does
\[\P(\text{$x$ is connected to $y$ in both $\F_0$ and $\F_t$})\]
behave as a function of the vertices $x,y \in \Z^d$ and the number $t>0$? Does the behaviour as a function of $x,y$ undergo a phase transition as $t$ is increased?
\end{question}

Recall that for the USF of $\Z^d$, $d\geq 5$, the probability that two vertices $x$ and $y$ are in the same component of the USF decays like $\|x-y\|^{-(d-4)}$ as $\|x-y\|\to\infty$ \cite{BeKePeSc04}.
%
%
%
A successful approach to \cref{Q:exceptionaltimes,Q:correlations}
 might need to draw more deeply on the interlacement literature than we have needed to in this paper.

\subsection{Excessive ends via update tolerance}

A key tool in the study of the USFs carried out in \cite{H15,HutNach15a,timar2015indistinguishability} is the \textbf{update-tolerance} of the USFs (referred to as \textbf{weak insertion tolerance} by Tim\'ar \cite{timar2015indistinguishability}). Given a sample $\F$ of either the WUSF and the FUSF of a network $G$ and an oriented edge $e$ of $G$ not in $\F$, update-tolerance states that there exists a forest $U(\F,e)$, obtained from $\F$ by adding $e$ and deleting some other appropriately chosen edge $d$, such that the law of $U(\F,e)$ is absolutely continuous with respect to that of $\F$. The forest $U(\F,e)$ is called the update of $\F$ at $e$. See \cite{H15,HutNach15a,timar2015indistinguishability} for further details. 

Since the number of excessive ends does not change when we perform an update, a positive solution of the following conjecture would yield an alternative proof of \cref{Thm:excessnotrandom}.
We say that a Borel set $\sA \subseteq \{0,1\}^E$ is \textbf{update-stable} if for every oriented edge $e$ of $G$, the updated forest $U(\F,e)$ is in $\sA$ if and only if $\F$ is in $\sA$ almost surely.  The conjecture would also imply a positive solution to \cite[Question 15.7]{BLPS}.

\begin{conjecture}
Let $G$ be an infinite network, and let $\F$ be either the wired or free spanning forest of $G$.  Then for every update-stable Borel set $\sA\subseteq \{0,1\}^E$, the probability that $\F$ is in $\sA$ is either zero or one. 
\end{conjecture}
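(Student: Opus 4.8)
The plan is to derive the conjecture from the tail-triviality of the wired and free uniform spanning forests \cite{BLPS} together with update-tolerance, handling both forests (and both the transient and recurrent cases) uniformly. Write $\mu$ for the law of $\F$ on $\{0,1\}^E$ and let $\sA$ be update-stable. Since there are only countably many oriented edges, and hence only countably many finite sequences of them, update-stability implies that $\mu$-almost surely \emph{every} finite composition of update moves applied to $\F$ leaves the indicator $\mathbbm{1}_\sA$ unchanged. Now fix an exhaustion $E=\bigcup_n F_n$ of the edge set by finite sets, and recall that the tail $\sigma$-algebra of $\F$ equals $\bigcap_n\sigma(\F|_{E\setminus F_n})$. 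Because the $\sigma$-algebras $\sigma(\F|_{E\setminus F_n})$ are decreasing, the reverse martingale convergence theorem shows that if $\mathbbm{1}_\sA$ is $\mu$-a.s.\ a function of $\F|_{E\setminus F}$ for \emph{every} finite $F\subseteq E$, then $\mathbbm{1}_\sA$ is $\mu$-a.s.\ tail-measurable, hence $\mu$-a.s.\ constant by tail-triviality. So it suffices to prove that $\P(\F\in\sA\mid \F|_{E\setminus F})\in\{0,1\}$ almost surely for each finite $F$.

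\emph{Mixing inside a finite window.} Fix a finite $F$, which after enlargement we may take to be the edge set of a finite connected subgraph. Conditionally on $\F|_{E\setminus F}$, the configuration $\F|_F$ ranges over the finitely many completions of $\F|_{E\setminus F}$ to an essential spanning forest of $G$, and the conditional law has a weighted-uniform-spanning-tree-type form. The core claim is that, for $\mu$-a.e.\ boundary configuration $\eta$, any two completions in the conditional support are joined by a finite sequence of update moves whose composition fixes $\F|_{E\setminus F}$ (though intermediate moves may disturb it). Granting this, for $\mu$-a.e.\ $\F$ with $\F|_{E\setminus F}=\eta$ and every target completion $\omega$ in the support, one of the (countably many) such sequences carries $\F$ to the configuration $(\eta,\omega)$; since update-stability forces every finite composition of update moves to preserve $\sA$ almost surely, we conclude that $(\eta,\omega)\in\sA$ whenever $\F\in\sA$, so $\mathbbm{1}_\sA$ is $\mu$-a.s.\ constant across the conditional support --- which is exactly the statement that $\P(\F\in\sA\mid\F|_{E\setminus F})\in\{0,1\}$. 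The connecting sequences are to be built by rerouting, one vertex at a time, the out-edges of the finitely many vertices incident to $F$: redirecting the out-edge of a vertex $x$ toward an edge $e=(x,y)$ amounts to an update at $e$ up to the deletion of one further edge (on the resulting cycle, or on the path toward the primary end of the affected component), and a suitably ordered finite sequence of such rerouting updates should both realise a prescribed completion of $\F|_F$ and cancel out all collateral changes outside $F$.

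\emph{Main obstacle.} The difficulty is concentrated in this last step. An update move is only \emph{approximately} local: it deletes a second edge that may lie far from $F$ (the next edge toward the primary end of the component of $e^-$, whose location depends delicately on the oriented-forest and end structure near $F$), so one must exhibit a finite, carefully scheduled composition of update moves that restores the configuration off $F$ while moving $\F|_F$ anywhere in its conditional support --- equivalently, that the natural ``localized update'' move-graph on the completions of $\F|_{E\setminus F}$ is connected for $\mu$-a.e.\ boundary configuration. I expect this combinatorial analysis (which for the free forest may be eased by also using insertion tolerance) to be the technical heart of any proof. One might instead hope, in the transient case and for the WUSF, to show directly that update-stable events are invariant under the ergodic, mixing dynamics $\langle \F_t\rangle_{t\in\R}=\langle\AB_t(\sI)\rangle_{t\in\R}$ of \cref{Sec:IAB} and then invoke \cref{Prop:mixing}; but a single time step of that process rewrites the out-edges of an entire trajectory, so reducing its effect on $\sA$ to single update moves demands essentially the same analysis, and this route covers neither the FUSF nor recurrent graphs.
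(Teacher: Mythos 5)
This statement is posed in the paper as an open \emph{conjecture}: the paper offers no proof of it, only the remarks that a positive solution would give an alternative proof of \cref{Thm:excessnotrandom} and would resolve \cite[Question 15.7]{BLPS}. So the relevant question is whether your argument actually settles the problem, and it does not. Your outer reduction is sound: if $\P(\F\in\sA\mid\F|_{E\setminus F_n})\in\{0,1\}$ a.s.\ for every $n$, then $\mathbbm{1}_\sA$ agrees a.s.\ with each of these conditional probabilities, reverse martingale convergence identifies the limit with the conditional probability given the tail, and tail-triviality of both USFs \cite{BLPS} finishes the job. But the entire content of the conjecture has been pushed into the ``mixing inside a finite window'' step, and that step is asserted, not proved --- as you yourself acknowledge. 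A proof must exhibit, for $\mu$-a.e.\ boundary configuration $\eta$ and every pair of completions $\omega,\omega'$ in the conditional support, a finite composition of update moves carrying $(\eta,\omega)$ to $(\eta,\omega')$; without this, nothing has been shown.

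The gap is not a formality: this is precisely where the known difficulty lies. An update $U(\F,e)$ deletes an edge $d$ determined by the global structure of the component of $e^-$ (an edge of the created cycle, or an edge on the path from $e^-$ toward the primary end), so a single move aimed at modifying $\F|_F$ can alter the configuration arbitrarily far outside $F$; each corrective update intended to undo that collateral change itself deletes a further edge, and there is no argument that this cascade terminates with the symmetric difference confined to $F$. Two completions may also differ in which infinite trees of $\F$ are merged, and it is not clear that update moves can realise such a change while returning the configuration off $F$ to its original state. In addition, your description of the conditional law of $\F|_F$ given $\F|_{E\setminus F}$ (support equal to all essential-spanning-forest completions, with a weighted-UST form) is itself unjustified: the spatial Markov property of the USFs is stated for conditioning on finitely many edges being present or absent, not for conditioning on the full infinite configuration outside $F$, and the conditional support may well be a proper subset of the completions. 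Finally, your fallback route via the dynamics $\langle\AB_t(\sI)\rangle_{t\in\R}$ and \cref{Prop:mixing} faces the same obstruction (a time step is not a composition of single updates in any obvious way) and, as you note, would in any case cover neither the FUSF nor recurrent networks. The proposal is therefore a reasonable strategy sketch for an open problem, not a proof.
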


\subsection{Ends in uniformly transient networks}
The following natural question remains open. If true, it would strengthen the results of Lyons, Morris and Schramm \cite{LMS08}.  A network is said to be \textbf{uniformly transient} if the capacities of the vertices of the network are bounded below by a positive constant. 

\begin{question}\label{Q:uniformtransience}
Let $G$ be a uniformly transient network with $\inf_e c(e)>0$. Does it follow that every component of the wired uniform spanning forest of $G$ is one-ended almost surely? 
\end{question}

The argument used in the proof of \cref{Thm:excessnotrandom} can be adapted to show that, under the hypotheses of \cref{Q:uniformtransience}, every component of the WUSF is either one-ended or has uncountably many ends, with no isolated excessive ends. To answer \cref{Q:uniformtransience} positively, it remains to rule this second case out.




\medskip

\subsection*{Acknowledgements}
This work was carried out while the author was an intern at Microsoft Research, Redmond. We thank Omer Angel, Ori Gurel-Gurevich, Ander Holroyd, Russ Lyons, Asaf Nachmias and Yuval Peres for useful discussions. We also thank Tyler Helmuth for his careful reading of an earlier version of this manuscript, and thank both Russ Lyons and the anonymous referee for suggesting many corrections and improvements to the initial preprint.

\footnotesize{
	\bibliographystyle{abbrv}
	\bibliography{unimodular}
 }
\end{document}